\documentclass[11pt]{article}

\usepackage{times}
\usepackage[hyperindex=true,pageanchor=true,hyperfigures=true,backref=false]{hyperref} 
\usepackage{amsmath,amsthm}
\usepackage{amssymb}
\usepackage{dsfont}

\usepackage[latin1]{inputenc}
\usepackage{textcomp,url}
\usepackage{tikz}

\usepackage{stmaryrd}

\DeclareSymbolFont{wideparensymbol}{OMX}{yhex}{m}{n}
\DeclareMathAccent{\wideparen}{\mathord}{wideparensymbol}{"F3}

\newcommand\firstcircle{(6,-4) circle (1.5cm)}
\newcommand\firstellipse{(4.5,-4) ellipse (5cm and 2cm)}
\newcommand\secondellipse{(7.5,-4cm) ellipse (5cm and 2cm)}
\newcommand\thirdellipse{(6,-4cm) ellipse (3.6cm and 1.8cm)}
\newcommand\fourthellipse{(6,-4cm) ellipse (6cm and 2.1cm)}

\evensidemargin -.1in
\oddsidemargin  -.1in
\setlength\topmargin{-0.75in}
\setlength\textheight{9.25in}
\setlength\textwidth{6.3in}


\usepackage{amsmath}
\usepackage{amssymb}
\DeclareSymbolFontAlphabet{\Bbb}{AMSb}



\newtheorem{theorem}{Theorem}[section]
\newtheorem{lemma}[theorem]{Lemma}

\newtheorem{definition}[theorem]{Definition}
\newtheorem{assumption}[theorem]{Assumption}

\theoremstyle{definition}
\newtheorem{example}[theorem]{Example}

\theoremstyle{remark}

\newtheorem{remark}[theorem]{Remark}


\newlength{\fixboxwidth}
\setlength{\fixboxwidth}{\marginparwidth}
\addtolength{\fixboxwidth}{+0pt}



\newcommand{\ca}[1]{{\cal #1}}



\newcommand{\R}{\mathbb{R}}



\renewcommand{\a}{\alpha}

\newcommand{\g}{\gamma}

\renewcommand{\d}{\delta}

\newcommand{\e}{\varepsilon}

\newcommand{\lb}{\lambda}
\newcommand{\x}{\xi}

\newcommand{\s}{\sigma}

\renewcommand{\t}{\tau}

\renewcommand{\P}{\mathrm{P}}

\newcommand{\Om}{\Omega}






\newcommand{\snorm}[1]{\Vert #1 \Vert}

\newcommand{\inorm}[1]{\Vert #1 \Vert_\infty}

\newcommand{\RP}[2]{{{\cal R}_{#1,\P}(#2)}}
\newcommand{\RPB}[1]{{{\cal R}_{#1,\P}^{*}}}




\makeatletter
\g@addto@macro{\example}{\small}
\makeatother

\title{A Bernstein-type Inequality for Some Mixing Processes and Dynamical Systems with an Application to Learning}


\author{H. Hang and I. Steinwart \\ 
\small{Institute for Stochastics and Applications} \\
\small{University of Stuttgart} \\
\small{D-70569 Stuttgart} \\
\small{\texttt{\{hanghn,ingo.steinwart\}@mathematik.uni-stuttgart.de}}}

\begin{document}

\maketitle

\begin{abstract}
We establish a Bernstein-type inequality 
for a class of stochastic processes that include 
the classical geometrically $\phi$-mixing processes,
Rio's generalization of these processes, as well as many time-discrete dynamical systems.
Modulo a logarithmic factor and some constants, our Bernstein-type inequality 
coincides with the classical Bernstein inequality for i.i.d.~data.
We further use this new Bernstein-type inequality to derive
an oracle inequality for generic regularized 
empirical risk minimization algorithms and data generated by such processes.
Applying this  oracle inequality
to 
support vector machines   using the Gaussian  kernels 
for both least squares and quantile regression,
it turns out that the resulting learning rates match, up to 
some arbitrarily small extra term in the exponent, 
the optimal rates for i.i.d.~processes.
\end{abstract}

\section{Introduction} \label{introduction}


Concentration inequalities such as Hoeffding's inequality, Bernstein's inequality, McDiarmid's inequality,
and Talagrand's inequality play an important role in many areas of probability.
For example, the analysis of various methods from non-parametric statistics and machine learning 
crucially depend on these inequalities, see e.g.~\cite{DeGyLu96,DeLu01,GyKoKrWa02,StCh08a}.
Here,  stronger results can typically be achieved by Bernstein's inequality and/or Talagrand's inequality,
since these inequalities allow for localization due to their specific dependence on the variance.
In particular, most derivations of minimax optimal learning rates are based on one of these inequalities.

%

The concentration inequalities mentioned above all assume the data to be generated by an i.i.d.~process.
Unfortunately, however, this
assumption is often violated in several important areas of 
applications including 
financial prediction, signal processing, system observation and diagnosis, text and speech recognition, and  
time series forecasting.
For this and other reasons there has been some effort 
to establish concentration inequalities for non-i.i.d.~processes, too.
For example,  generalizations of Bernstein's inequality 
to   $\alpha$-mixing  and $\phi$-mixing processes  have been found  \cite{Bosq93a,MoMa96a,MePeRi09a} and 
\cite{Samson00a}, respectively. Among many other applications, the Bernstein-type inequality
established in \cite{Bosq93a} was used in \cite{Zhang04a} to obtain  convergence rates for sieve estimates 
from $\alpha$-mixing strictly stationary processes in the special case of neural networks.
Furthermore, \cite{HaSt14a} applied the Bernstein-type inequality in \cite{MoMa96a} to derive
an oracle inequality for generic regularized empirical risk minimization algorithms 
learning from stationary $\alpha$-mixing processes. 
Moreover, by employing the Bernstein-type inequality in \cite{MePeRi09a}, 
\cite{Belomestny11a} derived almost sure uniform rates of convergence for the estimated L\'{e}vy density 
both in mixed-frequency and low-frequency setups and proved that these rates are optimal in the minimax sense.
Finally, in the particular case of the least square loss, 
\cite{AlWi12a} obtained the optimal learning rate for $\phi$-mixing processes by applying
the Bernstein-type inequality established in \cite{Samson00a}.

However, there exist many dynamical systems 
such as the uniformly expanding maps given in \cite[p.~41]{DeDoLaRLoPr07a}
that are not $\a$-mixing.
To deal with such non-mixing processes
Rio \cite{Rio96a} introduced so-called $\tilde \phi$-mixing coefficients, 
which extend the classical $\phi$-mixing coefficients.
For dynamical systems 
with exponentially decreasing, \emph{modified}   $\tilde \phi$-coefficients,
\cite{Wintenberger10a} derived a Bernstein-type inequality, 
which turns out to be the same as the one 
for i.i.d.~processes modulo some logarithmic factor.
However, this modification seems to be significant
stronger than Rio's original $\tilde \phi$-mixing, so it remains unclear when the 
Bernstein-type inequality in \cite{Wintenberger10a} is applicable.
In addition, the $\tilde \phi$-mixing concept is still not large enough 
to cover many commonly considered  dynamical systems. 
To include such dynamical systems, \cite{Maume06a} proposed 
the $\ca C$-mixing coefficients, which further generalize $\tilde \phi$-mixing coefficients.

In this work, we establish a Bernstein-type inequality for  geometrically $\ca C$-mixing processes,
which, modulo a logarithmic factor and some constants, coincides with the classical one for i.i.d.~processes.
Using the techniques developed in \cite{HaSt14a}, we then 
derive an oracle inequality for generic regularized empirical risk minimization
and $\ca C$-mixing processes. 
We further apply this oracle inequality to a state-of-the-art learning method, namely support vector 
machines (SVMs) with Gaussian kernels. Here it turns out that for both, least squares and quantile regression,
we can recover the (essentially) optimal rates recently found for the i.i.d.~case, see \cite{EbSt11a},
when the data is generated by a geometrically $\ca C$-mixing process.
Finally, we establish an oracle inequality for the problem of forecasting an unknown dynamical system.
This oracle will make it possible to extend the purely asymptotic analysis in \cite{StAn09a}
to learning rates.


%

The rest of this work is organized as follows: 
In Section \ref{StochasticProcesses}, 
we recall the notion of  (time-reversed) $\ca C$-mixing  processes. We further illustrate this class of processes by some examples
and discuss the relation between $\ca C$-mixing   and other notions of mixing.
As the main result of this work, a Bernstein-type inequality for geometrically (time-reversed) $\ca C$-mixing
processes will be formulated in Section
\ref{bernstein}. There, we also compare our new Bernstein-type inequality to previously established 
concentration inequalities.
As an application of our Bernstein-type  inequality,
we will derive the oracle inequality for regularized risk minimization schemes 
in  Section \ref{applications}. We additionally derive 
learning rates for SVMs and an oracle inequality for forecasting certain dynamical systems.
All proofs can be found in the last section.

\section{$\ca C$-mixing processes} \label{StochasticProcesses}

\allowdisplaybreaks

In this section we recall two classes of stationary
stochastic processes 
called (time-reversed) $\ca C$-mixing processes
that have a certain decay of correlations for suitable pairs of functions.
We also present some examples of such processes including certain dynamical systems.

Let us begin by introducing some notations. 
In the following,  $(\Omega, \mathcal{A}, \mu)$ always denotes a probability space.
As usual, we write $L_p(\mu)$ for  the space of (equivalence classes of) measurable functions
$f : \Omega \to \R$ with finite $L_p$-norm $\|f\|_p$. 
It is well-known that $L_p(\mu)$ together with $\|f\|_p$ forms a Banach space.
Moreover, if  $\ca A '\subset \ca A$ is a sub-$\s$-algebra, then 
$L_1(\mathcal A', \mu)$ denotes the space of all $\ca A'$-measurable functions $f\in L_1(\mu)$. 
In the following, for a Banach space $E$, we write $B_E$ for its closed unit ball.

Given a semi-norm $\|\cdot\|$ on a vector space $E$ of bounded measurable
functions $f : Z \rightarrow \mathbb{R}$, we define the $\ca C$-Norm by
\begin{align} 
\|f\|_{\ca C} := \|f\|_{\infty} + \|f\|
\label{lambdanorm}
\end{align} 
and denote the space of all bounded $\ca C$-functions by 
\begin{align}\label{def:lZ} 
 \ca C(Z) := \bigl\{ f : Z \rightarrow \mathbb{R} \, \bigl| \, \|f\|_{\ca C} < \infty \bigr\}.
\end{align}
Throughout this work, we only consider the semi-norms $\|\cdot\|$ in (\ref{lambdanorm}) that satisfy the inequality
\begin{align} 
 \bigl\| e^f \bigr\|  \leq  \bigl\| e^f \bigr\|_{\infty} \|f\|
\label{expproperty}
\end{align}
for all $f\in \ca C(Z)$. We are mostly interested in the following examples of semi-norms satisfying
(\ref{expproperty}).

\begin{example}\label{example-null}
 Let $Z$ be an arbitrary set and suppose that we have
$\|f\| = 0$ for all $f : Z \rightarrow \mathbb{R}$.
Then, it is obviously to see that 
$\|e^f\| = \|f\| = 0$. Hence, (\ref{expproperty}) is satisfied. 
\end{example}

\begin{example}\label{example-BV1}
Let $Z \subset \mathbb{R}$ be an interval.
A function $f : Z \to \R$ is said to have bounded variation on $Z$
if its total variation $\|f\|_{BV(Z)}$ is bounded.
Denote by $BV(Z)$ the set of all functions of bounded variation.
It is well-known that
$BV(Z)$ together with 
$\|f\|_{\infty} + \|f\|_{BV(Z)}$
forms a Banach space. Moreover, we have (\ref{expproperty}), i.e.~we have for all 
$f \in \ca C(Z)$:
\begin{align*} 
 \bigl\| e^f \bigr\|_{BV(Z)}  \leq  \bigl\| e^f \bigr\|_{\infty} \|f\|_{BV(Z)}.
\end{align*}
\end{example}


\begin{example}\label{example-Hoelder}
Let $Z$ be a subset of $\mathbb{R}^d$ and
$C_b(Z)$ be the set of bounded continuous functions on $Z$. 
For $f \in C_b(Z)$ and $0 < \alpha \leq 1$ let
\begin{align*} 
 \|f\| := |f|_{\alpha} := \sup_{z \neq z'} \frac{|f(z) - f(z')|}{|z-z'|^{\alpha}}.
\end{align*}
Clearly,  $f$ is $\a$-H\"{o}lder continuous if and only if $|f|_\a < \infty$.
The collection of bounded, $\alpha$-H\"{o}lder continuous functions on $Z$ will be denoted by 
\begin{align*} 
 C_{b,\a}(Z) := \{ f \in C_b(Z) : |f|_\a < \infty \}.
\end{align*}
Note that, if $Z$ is compact, then $C_{b,\a}(Z)$ together with the norm
$\|f\|_{C_{b,\a}} := \|f\|_{\infty} + |f|_{\alpha}$ forms a Banach space.
Moreover, the inequality (\ref{expproperty}) is also valid
for $f \in C_{b,\alpha}(Z)$.
As usual, we speak of Lipschitz continuous functions if $\a=1$
and write $\mathrm{Lip}(Z) := C_{b,1}(Z)$.
\end{example}

\begin{example}
Let $Z \subset \mathbb{R}^d$ be an open subset.  
For a continuously differentiable function $f:Z\to \R$ we write  
\begin{align*} 
 \|f\| := \sup_{z\in   Z}|f'(z)|
\end{align*}
and $C^1(Z) := \bigl\{ f:Z\to \R \, |\, f \mbox{ continuously differentiable and } \inorm f + \snorm f < \infty    \bigr\}$.
It is well-known, that $C^1(Z)$ is a Banach space 
with respect to the norm $\inorm \cdot + \snorm \cdot$
and the chain rule gives 
\begin{align*}
\bigl\| e^f \bigr\|
= \bigl\| \bigl( e^f \bigr)' \bigr\|_{\infty}
= \bigl\| e^f \cdot f' \bigr\|_{\infty}
\leq \bigl\| e^f \bigr\|_{\infty} \| f'\|_{\infty}
= \bigl\| e^f \bigr\|_{\infty} \|f\|,
\end{align*}
for all $f\in C^1(Z)$, i.e.~(\ref{expproperty}) is satisfied. 
\end{example}

Let us now assume that we also have 
a measurable space $(Z, \mathcal{B})$ and a measurable map $\chi : \Omega \to Z$. 
Then $\sigma(\chi)$ denotes the smallest $\sigma$-algebra on $\Omega$ for which $\chi$ is measurable. 
Moreover, $\mu_{\chi}$ denotes the $\chi$-image measure of $\mu$, which is defined by
$\mu_{\chi}(B) := \mu(\chi^{-1}(B))$, $B \in \ca B$.

Let $\mathcal{Z} := (Z_n)_{n \geq 0}$ be a $Z$-valued stochastic process on $(\Omega, \mathcal A, \mu)$, 
and $\mathcal A_0^i$ and $\mathcal A_{i+n}^{\infty}$ be the $\sigma$-algebras generated by 
$(Z_0, \ldots, Z_i)$ and $(Z_{i+n}, Z_{i+n+1}, \ldots)$, respectively. 
The process  $\mathcal{Z}$ is called \emph{stationary} if 
$\mu_{(Z_{i_1+i}, \ldots, Z_{i_n+i})} = \mu_{(Z_{i_1}, \ldots, Z_{i_n})}$
for all $n, i, i_1, \ldots, i_n \geq 1$.
In this case, we always write $P := \mu_{Z_0}$.
Moreover, to define certain dependency coefficients for $\ca Z$, we denote, 
for $\psi, \varphi \in L_1(\mu)$ satisfying $\psi\varphi\in L_1(\mu)$ the correlation
of $\psi$ and $\varphi$ by 
\begin{align*}
\mathrm{cor} (\psi, \varphi) 
:= \int_{\Omega} \psi \cdot \varphi \, d \mu -
   \int_{\Omega} \psi \, d \mu \cdot \int_{\Omega} \varphi \, d \mu\, .
\end{align*} 
%
%
%
%
%
Several dependency coefficients for $\ca Z$ can be expressed by imposing 
restrictions on $\psi$ and $\varphi$.
The following definition,
which is   taken from \cite{Maume06a},
introduces the restrictions on $\psi$ and $\varphi$ we consider throughout this work.

\begin{definition}
Let $(\Omega, \mathcal A, \mu)$ be a probability space, $(Z, \mathcal{B})$ be a measurable space,
$\mathcal{Z}:=(Z_i)_{i \geq 0}$ be a $Z$-valued, stationary process on $\Omega$, and 
$\snorm\cdot_{\ca C}$ be defined by \eqref{lambdanorm} for some semi-norm $\snorm\cdot$.
Then, for $n \geq 0$, we define:
\begin{enumerate}
 \item[(i)]  the $\ca C$-mixing coefficients by
\begin{align} \label{phiLambda}
\phi_{\ca C}(\mathcal{Z}, n)
:= \sup \big \{ \mathrm{cor}(\psi, h \circ Z_{k+n}) : k \geq 0, \psi \in B_{L_1(\mathcal A_0^k, \mu)}, h \in B_{\ca C(Z)}
\big\}
\end{align} 
 \item[(ii)] the time-reversed $\ca C$-mixing coefficients by
\begin{align}\label{phiLambdarev}
\phi_{\ca C, \text{rev}}(\mathcal{Z}, n)
:= \sup \big\{ 
\mathrm{cor}(h \circ Z_k, \varphi) : 
k \geq 0, h \in B_{\ca C(Z)}, \varphi \in B_{L_1(\mathcal A_{k+n}^{\infty}, \mu)}  \big\}.
\end{align} 
\end{enumerate}

\end{definition} 

Let $(d_n)_{n \geq 0}$ be a strictly positive sequence converging to $0$. 
Then we say that $\mathcal{Z}$ is (time-reversed) $\ca C$-mixing with rate $(d_n)_{n \geq 0}$, 
if we have $\phi_{\ca C,(\text{rev})}(\mathcal{Z}, n) \leq d_n$ for all $n \geq 0$.
Moreover, if $(d_n)_{n \geq 0}$ is of the form
\begin{align}
d_n := c \exp \bigl( - b n^{\gamma} \bigr), ~~~~~~ n \geq 1, 
\label{dn}
\end{align}
for some constants $b > 0$, $c \geq 0$, and $\gamma > 0$, then $\ca Z$ is called 
\emph{geometrically} (time-reversed) $\ca C$-mixing.

Obviously, $\mathcal{Z}$ is  $\ca C$-mixing with rate $(d_n)_{n \geq 0}$, if and only if
for all $k,n \geq 0$, all $\psi\in L_1(\mathcal A_0^k, \mu)$, 
and all $h \in \ca C(Z)$, we have
\begin{align}
 \mathrm{cor}(\psi, h \circ Z_{k+n}) 
\leq \|\psi\|_{L_1(\mu)}  \|h\|_{\ca C}   \, d_n,
\label{decayd}
\end{align} 
or similarly, time-reversed $\ca C$-mixing with rate $(d_n)_{n \geq 0}$, if and only if
for all $k, n \geq 0$, all $h \in \ca C(Z)$, 
and all $\varphi\in L_1(\mathcal A_{k+n}^{\infty}, \mu)$, we have
\begin{align}
 \mathrm{cor} (h \circ Z_k, \varphi) 
\leq \|h\|_{\ca C} \|\varphi\|_{L_1(\mu)}  \, d_n.
\label{decaydrev}
\end{align}

In the rest of this section we consider examples of (time-reversed) $\ca C$-mixing processes.
To begin with, let us assume that $\ca Z$ is a stationary $\phi$-mixing process  \cite{Ibragimov62a}
with rate $(d_n)_{n \geq 0}$. By \cite[Inequality (1.1)]{Davydov68a}  we then
have 
\begin{align}
 \mathrm{cor}(\psi, \varphi) 
\leq \|\psi\|_{L_1(\mu)} \|\varphi\|_{L_\infty(\mu)} d_n, ~~~~ n \geq 1,
\label{decayphi}
\end{align}
for all $\mathcal A_0^k$-measurable $\psi \in L_1(\mu)$ and all 
$\mathcal A_{k+n}^{\infty}$-measurable $\varphi \in L_{\infty}(\mu)$.
By taking $\snorm\cdot_{\ca C}:=\inorm\cdot$ and $\varphi:= h\circ Z_{k+n}$, we then see that 
\eqref{decayd} is satisfied, i.e.~$\ca Z$ is $\ca C$-mixing with rate $(d_n)_{n \geq 0}$.
Finally, by similar arguments  we can  deduce that 
 time-reversed $\phi$-mixing processes \cite[Section 3.13]{Bradley07a} 
are also time-reversed $\ca C$-mixing with the same rate.
In other words we have found 
\begin{displaymath}
   \phi_{L_\infty(\mu)}(\mathcal{Z}, n) = \phi(\mathcal{Z}, n) \qquad \qquad 
\mbox{ and } \qquad \qquad \phi_{L_\infty(\mu), \text{rev}}(\mathcal{Z}, n) = \phi_{\text{rev}}(\mathcal{Z}, n).
\end{displaymath}

To deal with processes that are not $\a$-mixing \cite{Rosenblatt56a}, 
Rio \cite{Rio96a} introduced the following relaxation of $\phi$-mixing coefficients
\begin{align}\label{phi-mix}
\tilde \phi(\mathcal Z, n)
&:= \sup_{k\geq 0, \atop f \in BV_1} 
\big\| \mathbb{E} \bigl( f(Z_{k+n} )\big| \mathcal A_0^k \big) 
- \mathbb{E} f(Z_{k+n}) \big\|_{\infty} \\ \nonumber
&= \sup \big \{ \mathrm{cor}(\psi, h \circ Z_{k+n}) : k \geq 0, \psi \in B_{L_1(\mathcal A_0^k, \mu)}, h \in B_{BV(Z)} \bigr\}
\end{align}
and an analogous time-reversed coefficient
\begin{align*}
\tilde \phi_{\text{rev}}(\mathcal Z, n) 
&:= \sup_{k\geq 0, \atop f \in BV_1} 
\big\| \mathbb{E} \big( f(Z_k) \big| A_{k+n}^{\infty} \big) 
- \mathbb{E}   f(Z_k) \big\|_{\infty} \\
&= \sup \big \{ \mathrm{cor}(h \circ Z_{k}, \varphi) : k \geq 0, \varphi \in B_{L_1(\mathcal A_{k+n}^\infty, \mu)}, h \in B_{BV(Z)} \bigr\}\, ,
\end{align*}
where the two identities follow from  \cite[Lemma 4]{DePr05a}. In other words we have 
\begin{displaymath}
   \phi_{BV(Z)}(\mathcal{Z}, n) = \tilde \phi(\mathcal{Z}, n) \qquad \qquad \mbox{ and } \qquad \qquad \phi_{BV(Z), \text{rev}}(\mathcal{Z}, n) = \tilde \phi_{\text{rev}}(\mathcal{Z}, n)
\end{displaymath}
Moreover, \cite[p.~41]{DeDoLaRLoPr07a} shows that some
 uniformly expanding maps are $\tilde \phi$-mixing but not $\a$-mixing.
Figure \ref{Relationship1}  summarizes the relations between $\phi$, $\tilde \phi$, and $\ca C$-mixing.


%
%
%

\begin{figure}
\begin{center}
\begin{tikzpicture}
  \begin{scope}[fill opacity=0.4]
    \fill[blue]  \firstcircle;
    \fill[green] \thirdellipse;
    \fill[yellow] \fourthellipse;
  \end{scope}
  \begin{scope}[very thick,font=\large]
    \draw \firstcircle node at (6,-4) {\small{$\phi$-mixing}};
    \draw \thirdellipse node at (8.6,-4) {\small{$\tilde \phi$-mixing}};
    \draw \fourthellipse node at (10.8,-4) {\small{$\ca C$-mixing}};
  \end{scope}
\end{tikzpicture}
\caption{Relationship between $\phi$-, $\tilde \phi$-, and $\ca C$-mixing processes}
\label{Relationship1}
\end{center}
\end{figure}
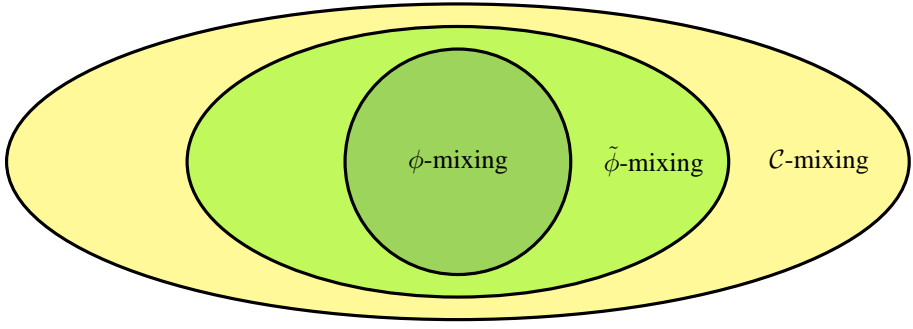

Our next goal is to relate $\ca C$-mixing to some well-known 
results on the decay of correlations for dynamical systems.
To this end, recall that 
 $(\Omega, \mathcal A, \mu, T)$ is a dynamical system, if 
 $T : \Omega \to \Omega$ is a measurable map satisfying 
$\mu(T^{-1} (A)) = \mu(A)$ for all $A \in \mathcal A$.
Let us consider the stationary stochastic process $\ca Z := (Z_n)_{n \geq 0}$ defined by 
$Z_n := T^n$ for $n\geq 0$.
Since $\ca A_{n+1}^{n+1} \subset \ca A_n^n$ for all $n\geq 0$, we conclude that 
$\ca A_{k+n}^\infty = \ca A_{k+n}^{k+n}$.
Consequently, $\varphi$ is $\ca A_{k+n}^\infty$-measurable, if and only if it is 
$\ca A_{k+n}^{k+n}$-measurable. Moreover   $\ca A_{k+n}^{k+n}$ is the $\s$-algebra 
generated by $T^{k+n}$, and hence $\varphi$ is $\ca A_{k+n}^{k+n}$-measurable, if and only if it is 
of the form $\varphi = g\circ T^{k+n}$ for some suitable, measurable $g:\Om\to\R$.
Let us now suppose that
$\snorm\cdot_{\ca C(\Om)}$ is defined by \eqref{lambdanorm} for some semi-norm $\snorm\cdot$.
For $h\in \ca C(\Om)$ we then find 
\begin{align*}
   \mathrm{cor} (h \circ Z_k, \varphi) 
    =
   \mathrm{cor} (h \circ Z_k, g\circ Z_{k+n}) 
   & = 
   \mathrm{cor} (h , g\circ Z_{n}) \\
   & =
   \int_{\Omega} h \cdot (g \circ T^n) \, d \mu -
   \int_{\Omega} h \, d \mu \cdot \int_{\Omega} g \, d \mu \\
   & = :  \mathrm{cor}_{T, n}(h,g)\,  .
\end{align*}
The next result shows that $\mathcal{Z}$ is time-reversed $\ca C$-mixing
even if we only have generic constants $C(h,g)$ in \eqref{decaydrev}.

\begin{theorem} \label{equivalencelambda} 
Let $(\Omega, \mathcal A, \mu, T)$ be a dynamical system
and the stochastic process $\ca Z := (Z_n)_{n \geq 0}$ be defined by 
$Z_n := T^n$ for $n\geq 0$. Moreover, Let
$\snorm\cdot_{\ca C}$ be defined by \eqref{lambdanorm} for some semi-norm $\snorm\cdot$.
Then, $\mathcal{Z}$ is time-reversed $\ca C$-mixing with rate $(d_n)_{n \geq 0}$ iff 
for all $h\in \ca C(\Om)$ and all $g \in L_1(\mu)$ there exists a constant $C(h,g)$ such that
\begin{align*}
 \mathrm{cor}_{T, n}(h,g) \leq C(h,g) d_n,
\,\,\,\,\,\,\,
n \geq 0.
\end{align*}
\end{theorem}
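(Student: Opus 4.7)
The forward direction $(\Rightarrow)$ is a direct specialization of the defining inequality \eqref{decaydrev}. Given $h\in\ca C(\Om)$ and $g\in L_1(\mu)$, take $k:=0$ and $\varphi:=g\circ T^n$. Because $T$ (and hence $T^n$) preserves $\mu$, we have $\snorm{\varphi}_{L_1(\mu)}=\snorm{g}_{L_1(\mu)}$, and since $\ca A_n^\infty=\sigma(T^n)$, the function $\varphi$ is $\ca A_n^\infty$-measurable. The identity $\mathrm{cor}(h\circ Z_0,\varphi)=\mathrm{cor}_{T,n}(h,g)$ that was derived just before the theorem then yields $\mathrm{cor}_{T,n}(h,g)\le\snorm{h}_{\ca C}\snorm{g}_{L_1(\mu)}d_n$, so $C(h,g):=\snorm{h}_{\ca C}\snorm{g}_{L_1(\mu)}$ does the job.

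For the converse $(\Leftarrow)$, I first invoke the structural observation from the paragraph preceding the theorem: every $\varphi\in L_1(\ca A_{k+n}^\infty,\mu)$ can be written as $g\circ T^{k+n}$ with $\snorm{\varphi}_{L_1(\mu)}=\snorm{g}_{L_1(\mu)}$, and $\mathrm{cor}(h\circ Z_k,\varphi)=\mathrm{cor}_{T,n}(h,g)$. So it suffices to produce a constant $K$ independent of $n$, $h$, $g$ such that
\[
  |\mathrm{cor}_{T,n}(h,g)|\le K\snorm{h}_{\ca C}\snorm{g}_{L_1(\mu)}d_n,
\]
since the factor $K$ may then be absorbed by passing from the rate $(d_n)$ to $(Kd_n)$, which is again of the same (geometric) type. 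Applying the hypothesis to both $g$ and $-g$ first upgrades the one-sided pointwise bound to $|\mathrm{cor}_{T,n}(h,g)|\le C'(h,g)d_n$ with $C'(h,g):=\max\{C(h,g),C(h,-g)\}$.

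The main tool is a double application of the Banach--Steinhaus (uniform boundedness) theorem, or equivalently its bilinear consequence: a family of continuous bilinear forms $(B_n)$ on the product of two Banach spaces that is pointwise bounded (i.e.\ $\sup_n|B_n(x,y)|<\infty$ for each $(x,y)$) is automatically uniformly bounded in bilinear norm. I would apply this to $B_n(h,g):=\mathrm{cor}_{T,n}(h,g)/d_n$ on the product of the two Banach spaces $\ca C(\Om)$ and $L_1(\mu)$. Each $B_n$ is indeed bilinear and continuous, since
$|B_n(h,g)|\le 2\inorm{h}\snorm{g}_{L_1(\mu)}/d_n\le 2\snorm{h}_{\ca C}\snorm{g}_{L_1(\mu)}/d_n$, and pointwise boundedness is precisely the two-sided hypothesis. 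Concretely, first fix $h$: Banach--Steinhaus on $g\mapsto B_n(h,g)$ in $L_1(\mu)^*$ gives $\sup_n|B_n(h,g)|\le K(h)\snorm{g}_{L_1(\mu)}$; then fix $g$ and apply Banach--Steinhaus on $h\mapsto B_n(h,g)$ in $\ca C(\Om)^*$ to obtain $\sup_n|B_n(h,g)|\le M(g)\snorm{h}_{\ca C}$; finally apply Banach--Steinhaus to the resulting maps $L_1(\mu)\to\ca C(\Om)^*$ to collapse both dependencies into the universal constant $K$. The main technical point—more a standing hypothesis than a genuine obstacle—is that $\ca C(\Om)$ must be a Banach space, which is the case in all examples considered (bounded variation, H\"older, $C^1$); the only real subtlety is the harmless reinterpretation of the multiplicative $K$ as a change of rate.
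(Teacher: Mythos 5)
Your proof is correct and follows essentially the same route as the paper: the forward direction is the same direct specialization of \eqref{decaydrev} (with $k=0$, $\varphi=g\circ T^n$, and measure preservation), and the converse is, as in the paper, a uniform-boundedness argument. The paper phrases its version abstractly (Banach spaces $E_1$, $E_2$ embedded in $L_p$, $L_q$ and a sequence space $F\hookrightarrow\ell_\infty$) and outsources the details to the analogue of Theorem 5.1 in \cite{StAn09a}, whereas you run Banach--Steinhaus directly on the bilinear forms $B_n(h,g)=\mathrm{cor}_{T,n}(h,g)/d_n$; these are the same Baire-category mechanism. Two of the things you flag explicitly are indeed left implicit in the paper's proof and are worth keeping in mind: the completeness of $\ca C(\Om)$ (the paper tacitly requires $E_1$ to be a Banach space), and the fact that the conclusion a priori yields the rate $(Kd_n)$ rather than $(d_n)$, which is harmless here since the rate is only meaningful up to a multiplicative constant. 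One small presentational remark: the middle step of your ``triple'' Banach--Steinhaus (fixing $g$ and bounding over $h$) is redundant — after the first application you already have well-defined bounded maps $\Phi_n:\ca C(\Om)\to L_1(\mu)^*$, $\Phi_n(h)=B_n(h,\cdot)$, which are pointwise bounded in $n$, and a single further application of Banach--Steinhaus to $(\Phi_n)$ gives the uniform constant.
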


Thus, we see that 
$\ca Z$ is time-reversed $\ca C$-mixing, if 
$\mathrm{cor}_{T, n}(h,g)$ converges to zero for all $h\in \ca C(\Om)$ and $g\in L_1(\mu)$
with a rate that is independent of $h$ and $g$.

For concrete examples, let us first mention that
\cite{Maume06a} presents some discrete dynamical systems that are
time-reversed geometrically $\ca C$-mixing such as Lasota-Yorke
maps, uni-modal maps, piecewise expanding maps in higher dimension.
Here, the involved spaces are either $BV(Z)$ or $\mathrm{Lip}(Z)$.

In dynamical systems where chaos is weak, 
correlations often decay polynomially, i.e. 
the correlations satisfy
\begin{align}\label{ds-decay-poly}
|\mathrm{cor}_{T, n}(h, g)| \leq C(h, g) \cdot n^{- b} \, , \qquad\qquad n\geq 0,
\end{align}
for some constants $b > 0$ and $C(h, g) \geq 0$ depending on the functions $h$ and $g$. 
Young \cite{Young99a} developed a powerful method for studying correlations in systems 
with weak chaos where correlations decay at a polynomial rate for bounded $g$
and H\"{o}lder continuous $h$. 
Her method was applied to billiards with slow mixing rates, such as Bunimovich billiards, 
see \cite[Theorem 3.5]{BaMe08a}. For example, modulo some logarithmic factors
\cite{Markarian04a, ChZh05a} obtained 
\eqref{ds-decay-poly} with $b=1$ and $b=2$
for certain forms of  Bunimovich  billiards and
 H\"{o}lder continuous $h$ and $g$.
Besides these results, Baladi \cite{Baladi01a} also compiles
a list of \textquotedblleft parabolic\textquotedblright \ or 
\textquotedblleft intermittent\textquotedblright \ systems having a polynomial decay.

It is well-known that, if the functions $h$ and $g$ are sufficient smooth,
there exist dynamical systems where chaos is strong enough 
such that the correlations decay exponentially fast, that is, 
\begin{align}
|\mathrm{cor}_{T, n}(h, g)| 
\leq C(h, g) \cdot \exp \bigl( - b n^{\gamma} \bigr)\, , \qquad\qquad n\geq 0,
\label{CExpDecay}
\end{align}
for some constants $b > 0$, $\gamma > 0$, and $C(h, g) \geq 0$ depending on $h$ and $g$.
Again, Baladi \cite{Baladi01a} has listed some simple examples of dynamical systems 
enjoying \eqref{CExpDecay}
for analytic $h$ and $g$
such as the angle doubling map and the Arnold's cat map.
Moreover, for
continuously differentiable  $h$ and $g$,
\cite{Ruelle76a, Sinai72a}  proved
\eqref{CExpDecay}
for two closely related classes of systems, more precisely, 
$C^{1+\varepsilon}$ Anosov or 
the Axiom-A diffeomorphisms with Gibbs invariant measures and 
topological Markov chains, which are also known as subshifts of finite type,
see also \cite{Bowen75a}. These results 
were then extended by \cite{HoKe82a,Rychlik83a} 
to expanding interval maps with smooth invariant measures
for functions $h$ and $g$ of bounded variation. 
In the 1990s, similar results for H\"{o}lder continuous  $h$ and $g$ 
were proved for systems with somewhat weaker chaotic behavior which is characterized by nonuniform hyperbolicity, 
such as quadratic interval maps, see \cite{Young98a}, \cite{KeNo92a} and the H\'{e}non map \cite{BeYo00a}, and 
then extended to chaotic systems with singularities by \cite{Liverani95a} and specifically to Sinai billiards 
in a torus by \cite{Young98a,Chernov99a}. 
For some of these extensions,
such as smooth expanding dynamics, smooth nonuniformly hyperbolic
systems, and hyperbolic systems with singularities,
we refer to \cite{Baladi00a} as well. 
Recently, for $h$ of bounded variation and bounded $g$,
\cite{LuMe13a} obtained 
\eqref{CExpDecay}
for a 
class of piecewise smooth one-dimensional maps with
critical points and singularities. Moreover,
\cite{ArGaPa14a} has deduced \eqref{CExpDecay} for $h, g \in \mathrm{Lip}(Z)$ and a
suitable iterate of Poincar\'{e}'s first return map $T$ 
of a large class of singular hyperbolic flows.

\section{A Bernstein-type inequality} \label{bernstein}

In this section, we present the key result of this work, a Bernstein-type
inequality for stationary geometrically (time-reversed) $\ca C$-mixing process. 

\begin{theorem} \label{bernsteininequality}
Let $\mathcal Z := (Z_n)_{n \geq 0}$ be a $Z$-valued stationary 
geometrically (time-reversed) $\ca C$-mixing process on
$(\Omega, \mathcal A, \mu)$ 
with rate $(d_n)_{n \geq 0}$ as in (\ref{dn}),
$\snorm\cdot_{\ca C}$ be defined by \eqref{lambdanorm} for some semi-norm $\snorm\cdot$
satisfying (\ref{expproperty}),
and $P := \mu_{Z_0}$. 
Moreover, let $h \in \ca C(Z)$ with $\mathbb E_P h = 0$
and assume that there exist some $A > 0$, $B > 0$ , and $\sigma \geq 0$ such that
$\|h\| \leq A$, $\|h\|_{\infty} \leq B$, and $\mathbb E_P h^2 \leq \sigma^2$.
Then, for all $\varepsilon > 0$ and all
\begin{align}
n \geq n_0 := \max \left\{ \min \biggl\{ m \geq 3 : 
m^2 \geq \frac{808 c (3A + B)}{B} \text{ and } 
\frac{m}{(\log m)^{\frac{2}{\gamma}}} \geq 4 \biggr\},
e^{\frac{3}{b}} \right\},
\label{nzero}
\end{align}
we have
\begin{align}
\mu \left( \biggl\{ \omega \in \Omega : \frac{1}{n} \sum_{i=1}^n h \circ Z_i \geq \varepsilon \biggr\} \right) 
\leq 2 \exp \left( - \frac{n \varepsilon^2}{8 (\log n)^{\frac{2}{\gamma}} (\sigma^2 + \varepsilon B/3)} \right),
\label{bernsteininequality1}
\end{align}
or alternatively, for all $n \geq n_0$ and $\tau > 0$, we have
\begin{align}
\mu \left( \Biggl\{ \omega \in \Omega : 
\frac{1}{n} \sum_{i=1}^n h(Z_i(\omega)) 
\geq \sqrt{\frac{8 (\log n)^{\frac{2}{\gamma}} \sigma^2 \tau}{n}} 
     + \frac{8 (\log n)^{\frac{2}{\gamma}} B \tau}{3 n} \Biggr\} \right) 
\leq 2 e^{- \tau}.  
\label{bernsteininequality2}
\end{align}
\end{theorem}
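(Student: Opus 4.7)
The plan is to follow the standard Cram\'er--Chernoff route: bound the moment generating function $M(t) := \mathbb{E}\exp(tS_n)$ with $S_n := \sum_{i=1}^n h(Z_i)$, apply Markov's inequality, and optimize over $t > 0$. The hypothesis (\ref{expproperty}) is the key technical tool, because it tells us that $\exp(sh)$ is a bounded $\ca C$-function whenever $h$ is: since $\snorm\cdot$ is a semi-norm,
\begin{align*}
\snorm{e^{sh}}_{\ca C}
= \snorm{e^{sh}}_\infty + \snorm{e^{sh}}
\le \snorm{e^{sh}}_\infty (1 + \snorm{sh})
\le e^{sB}(1 + sA)
\end{align*}
for all $s > 0$, so $e^{sh}$ is a legitimate test function in (\ref{phiLambda}) or (\ref{phiLambdarev}).

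The core step is a blocking/decoupling estimate for $M(t)$. I would partition $\{1,\ldots,n\}$ into $p$ consecutive blocks of length $k$, each pair of adjacent blocks separated by a gap of length $a$, with $p(k+a)\approx n$. Choosing $a$ of order $((\log n)/b)^{1/\gamma}$ makes $d_a\lesssim 1/n$, while $k$ is chosen comparably. Absorbing the gap contributions via the crude bound $|h| \le B$ reduces matters to estimating $\mathbb{E}\exp(t\sum_{j=1}^p U_j)$, where $U_j$ is the sum of $h(Z_i)$ over the $j$-th block. Using (\ref{phiLambda}) iteratively, with $\psi = \exp(t\sum_{j=1}^{p-1}U_j)$ and the product structure $\exp(tU_p) = \prod_{i \in \text{block } p}\exp(th(Z_i))$ peeled one variable at a time---invoking the $\ca C$-norm bound displayed above at each step---one obtains an approximate product decoupling
\begin{align*}
\mathbb{E}\exp\bigl(t{\textstyle\sum_{j=1}^p U_j}\bigr)
\le \prod_{j=1}^p \mathbb{E}\exp(tU_j) + p\cdot(\text{mixing error of order }d_a).
\end{align*}
The time-reversed case is handled symmetrically, using (\ref{phiLambdarev}) to peel off the \emph{first} block at each stage.

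Each individual block MGF is then bounded by the classical Bernstein estimate $\mathbb{E}\exp(tU_j) \le \exp(kt^2\sigma^2/(2(1-tB/3)))$, valid for $0 < t < 3/B$. Multiplying the block bounds, restoring the gap factor $e^{tpaB}$, and collecting mixing errors gives $\log M(t) \lesssim nt^2\sigma^2(\log n)^{2/\gamma}/(2(1-tB/3))$ up to a lower-order remainder, where one factor of $(\log n)^{1/\gamma}$ arises from the gap $a$ and the second from balancing the block length $k$ against the accumulated mixing error. Markov's inequality combined with the standard Bernstein optimization $t = \varepsilon/(\sigma^2 + B\varepsilon/3)$ then yields (\ref{bernsteininequality1}); the threshold $n_0$ in (\ref{nzero}) is exactly what is needed to dominate the lower-order remainder and pin down the constants $8$ and $8/3$. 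The alternative form (\ref{bernsteininequality2}) follows from (\ref{bernsteininequality1}) by the standard Bernstein substitution $\varepsilon = \sqrt{8(\log n)^{2/\gamma}\sigma^2\tau/n} + 8(\log n)^{2/\gamma}B\tau/(3n)$ together with $\sqrt{a+b}\le\sqrt a+\sqrt b$.

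The hard part will be the within-block decoupling: a single $\exp(tU_j)$ is a function of $k$ consecutive variables, none of which are separated by a significant gap, so the $\ca C$-mixing inequality cannot be applied to $\exp(tU_j)$ as a single test function. One must peel off the variables one by one and carefully manage the accumulating $(1+sA)e^{sB}$ factors from (\ref{expproperty}) across all $p\cdot k$ peelings so that the final penalty is only logarithmic. Equivalently, $k$ and $a$ must be tuned simultaneously so that the mixing error $p\,d_a$ is traded against the block-internal error, and this is where both the specific $(\log n)^{2/\gamma}$ factor and the explicit constant $808$ in $n_0$ arise.
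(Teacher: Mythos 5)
Your blocking scheme is the wrong one, and that is precisely where the proof breaks. You propose consecutive blocks of length $k$ separated by explicit gaps of length $a$, use the crude bound $|h|\le B$ to discard the gap terms, and then hope to decouple the block MGFs $\mathbb{E}\exp(tU_j)$ from each other and finally bound each one by the i.i.d.\ Bernstein estimate $\exp(kt^2\sigma^2/(2(1-tB/3)))$. You correctly identify the obstruction (``the hard part will be the within-block decoupling''), but the fix you suggest cannot work: inside a consecutive block the adjacent indices are separated by a gap of exactly $1$, so peeling $\exp(th(Z_{\text{last}}))$ off $\exp(tU_p)$ with the $\ca C$-mixing inequality only yields an error of order $d_1$, which is $O(1)$, not small. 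There is nothing to trade against the block-internal dependence, and the per-block Bernstein estimate you invoke is simply false for $k$ dependent consecutive variables. No choice of $k$, $a$ rescues this.

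The paper avoids the problem entirely by using the \emph{interleaved} (alternating) Bernstein blocking rather than consecutive blocks with gaps. It partitions $\{1,\dots,n\}$ into $k$ sets $I_i=\{i,i+k,i+2k,\dots\}$; within each $I_i$ the indices are already spaced a distance $k$ apart, so Lemma~\ref{lemma2} can recursively peel the product $\prod_{j}f_{jk}$ using the decay $d_k$ at every step, with a clean geometric series of errors that is summable as soon as $2l\|f\|_{\ca C}d_k\le\|f\|_{L_1(P)}$. The blocks are then recombined by Jensen's inequality (Lemma~\ref{lemma3}), which turns $\mathbb{E}\exp(tS_n/n)$ into a \emph{convex combination} $\sum_i p_i\,\mathbb{E}\exp(tg_i/|I_i|)$ rather than the approximate \emph{product} you aim for; this removes any need to decouple across blocks at all. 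Lemma~\ref{lemma5} then applies the scalar (single-variable) Bernstein MGF bound to $\mathbb{E}_P\exp(th/|I_i|)$, raised to the power $|I_i|$ via Lemma~\ref{lemma4}, never to a dependent block sum. The outer Markov/optimization steps and the derivation of (\ref{bernsteininequality2}) from (\ref{bernsteininequality1}) in your writeup are fine, as is your observation that (\ref{expproperty}) makes $e^{sh}$ an admissible $\ca C$-test function, but the central decoupling mechanism in your proposal needs to be replaced with the interleaved blocking plus Jensen before the argument goes through.
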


Note that besides the additional logarithmic factor $4 (\log n)^{\frac{2}{\gamma}}$ and the constant $2$ in front of the exponential,
 \eqref{bernsteininequality1}  coincides with Bernstein's classical 
inequality for i.i.d.~processes.


In the remainder of this section, we compare Theorem \ref{bernsteininequality} with 
some other concentration inequalities 
for   non-i.i.d.~processes $\ca Z$. Here,  $\ca Z$ is real-valued and $h$ is the identity map if not 
specified otherwise.


%
%

\vspace{0.5cm}
\begin{example}
Theorem 2.3 in \cite{Baladi00a} shows that
smooth expanding systems on $[0, 1]$ have exponential
decay of correlations (\ref{decayd}).
Moreover, if, for such expanding systems,  the transformation $T$ is Lipschitz continuous and satisfies the conditions 
at the end of Section 4 in \cite{DePr05a} 
and the ergodic measure $\mu$ satisfies  \cite[condition (4.8)]{DePr05a}, then 
 \cite[Theorem 2]{DePr05a} shows that for all $\varepsilon \geq 0$ and $n \geq 1$,
the left-hand side of \eqref{bernsteininequality1} is bounded by 
\begin{align*}
\exp \left( - \frac{\varepsilon^2 n}{C} \right) 
\end{align*}
where $C$ is some constant independent of $n$. 
The same result has been proved in \cite[Theorem III.1]{CoMaSc02a} as well.
Obviously, this is a Hoeffding-type bound instead of a Bernstein-type one.
Hence, it is always larger than ours if 
the denominator of the exponent in (\ref{bernsteininequality1}) is smaller than $C$. 
\end{example}

\begin{example} 
For   dynamical systems 
with exponentially decreasing  $\tilde \phi$-coefficients, 
see \cite[condition (3.1)]{Wintenberger10a},
\cite[Theorem 3.1]{Wintenberger10a} provides a Bernstein-type inequality 
for $1$-Lipschitz functions 
$h:Z \to [-1/2,1/2]$
w.r.t. some metric $d$ on $Z$, 
in which the left-hand side of \eqref{bernsteininequality1} is bounded by 
\begin{align}
\exp \left( - \frac{C \varepsilon^2 n}{\sigma^2 + \varepsilon \log f(n)} \right)
\label{WinterBound}
\end{align}
for some constant $C$ independent of $n$ and
$f(n)$ being some function monotonically increasing in $n$. 
Note that modulo the logarithmic factor $\log f(n)$
the bound (\ref{WinterBound}) is the same as the one 
for i.i.d.~processes. 
Moreover, if $f(n)$ grows polynomially, cf.~\cite[Section 3.3]{Wintenberger10a}, then 
(\ref{WinterBound}) has the same asymptotic behaviour as our bound.
However, 
geometrically $\ca C$-mixing 
is weaker than  Condition (3.1) in \cite{Wintenberger10a}:
Indeed, the required exponential form of Condition (3.1) in \cite{Wintenberger10a}, 
i.e. 
\begin{align*}
 \sup_{k \geq 0} \tilde \phi (\mathcal{A}_0^k, \mathbf{Z}_{k+n}^{k+2n-1})
:= \sup_{k\geq 0} \sup_{f \in \mathcal{F}^n} 
\big\| \mathbb{E} \bigl( f(\mathbf{Z}_{k+n}^{k+2n-1}) \big| \mathcal A_0^k \big) 
- \mathbb{E} f(\mathbf{Z}_{k+n}^{k+2n-1}) \big\|_{\infty}
\leq c \cdot e^{- b n}
\end{align*}
for some $c, b > 0$ and all $n \geq 1$,
where $\mathbf{Z}_{k+n}^{k+2n-1} := (Z_{k+n}, \ldots, Z_{k+2n-1})$ and
$\mathcal{F}^n$ is the set of $1$-Lipschitz functions $f : Z^n \to [-\frac{1}{2}, \frac{1}{2}]$ 
w.r.t. the metric $d^n(x,y) := \frac{1}{n} \sum_{i=1}^n d(x_i, y_i)$,
implies 
\begin{align*}
 \sup_{k\geq 0} \sup_{f \in \mathcal{F}} 
\big\| \mathbb{E} \bigl( f(Z_{k+n} )\big| \mathcal A_0^k \big) 
- \mathbb{E} f(Z_{k+n}) \big\|_{\infty}
\leq c \cdot n e^{-bn} \leq c \cdot  e^{- \tilde{b}n}
\end{align*}
for some $c, \tilde{b} > 0$ and all $n \geq 1$, where
$\mathcal{F}$ is the set of $1$-Lipschitz functions $f : Z \to [-\frac{1}{2}, \frac{1}{2}]$ 
w.r.t. the metric $d$.  
In other words, processes satisfying Condition (3.1) in \cite{Wintenberger10a} 
are $\tilde \phi$-mixing, see \eqref{phi-mix},
which is stronger than geometrically $\ca C$-mixing, 
see again Figure \ref{Relationship1}. 
Moreover, our result holds for all $\g>0$, while \cite{Wintenberger10a} only considers the case $\g=1$.
%
\end{example}

\begin{example}
For an $\alpha$-mixing sequence of centered and bounded
random variables satisfying
$\alpha(n) \leq c \exp (- b n^{\gamma})$
for some constants $b > 0$, $c \geq 0$, and $\gamma > 0$,
\cite[Theorem 4.3]{MoMa96a} bounds the
 left-hand side of \eqref{bernsteininequality1}   by
\begin{align}
(1 + 4 e^{-2} c) \exp 
\left( - \frac{3 \varepsilon^2 n^{(\gamma)}}{6 \sigma^2 + 2 \varepsilon B} \right)
\,\,\,\,\,\,
\text{ with }
n^{(\gamma)} \asymp n^{\frac{\gamma}{\gamma+1}}
\label{alphaone}
\end{align}
for all $n \geq 1$ and all $\varepsilon > 0$. 
In general, this bound and our result are not comparable,
since not every $\alpha$-mixing process satisfies (\ref{decayd})
and conversely, not every process satisfying (\ref{decayd})
is necessarily $\alpha$-mixing, see Figure \ref{Relationship2}.
Nevertheless, for $\phi$-mixing processes,
it is easily seen that this bound is always worse than ours for a fixed $\gamma > 0$,
if $n$ is large enough. 
\end{example}

\begin{example} \label{alphaexample}
For an $\alpha$-mixing stationary sequence of centered and bounded
random variables satisfying $\alpha(n) \leq \exp(- 2 c n)$ for some $c > 0$, 
  \cite[Theorem 2]{MePeRi09a}  
bounds the   
   left-hand side of \eqref{bernsteininequality1}  by
\begin{align}
\exp \left( - \frac{C \varepsilon^2 n}{v^2  + \varepsilon B (\log n)^2 + n^{-1}B^2 } \right)\, ,
\label{alphatwo}
\end{align}
where $C>0$ is some  constant and
\begin{align} \label{vquadrat}
 v^2 := \sigma^2 + 2 \sum_{2 \leq i \leq n} |\mathrm{cov}(X_1, X_i)|\, .
\end{align}
By applying the covariance inequality for $\alpha$-mixing processes, 
see \cite[the corollary to Lemma 2.1]{Davydov68a}, 
we obtain $v^2 \leq C_{\delta} \|X_1\|_{2+\delta}^2$ for an arbitrary $\delta > 0$ and
a constant $C_{\delta}$   only depending on $\delta$. 
If the additional $\d>0$ is ignored, \eqref{alphatwo} has therefore   the same asymptotic behavior as
our bound. In general, however, the additional $\d$  does influence the asymptotic behavior. For example,
the oracle inequality we obtain in the next section would be slower by a factor of $n^\xi$, where $\xi > 0$ is arbitrary, if we used 
\eqref{alphatwo} instead. 
Finally, note that in general the bound \eqref{alphatwo} and ours are not comparable, see again Figure \ref{Relationship2}.
%

In particular, Inequality (\ref{alphatwo}) can be applied to geometrically $\phi$-mixing processes with $\g=1$.
By using the covariance inequality (1.1) for $\phi$-mixing processes in \cite{Davydov68a}, 
we can bound $v^2$ defined as in (\ref{vquadrat}) by $C \sigma^2$ with some constant $C$ independent of $n$. 
Modulo the term $n^{-1} B$ in the denominator, 
the  bound (\ref{alphatwo}) coincides with ours for geometrically $\phi$-mixing processes with $\g=1$.
However, our bound also holds for such processes with $\g\in (0,1)$.
%
\end{example}

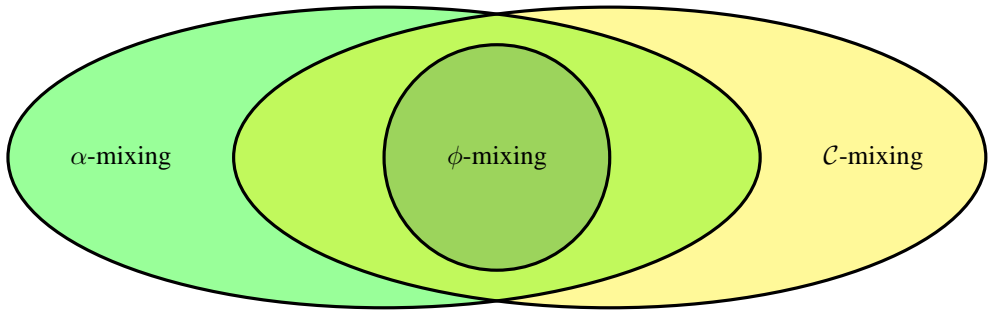
\begin{figure}
\begin{center}
\begin{tikzpicture}
  \begin{scope}[fill opacity=0.4]
    \fill[blue]  \firstcircle;
    \fill[green] \firstellipse;
    \fill[yellow] \secondellipse;
  \end{scope}
  \begin{scope}[very thick,font=\large]
    \draw \firstcircle node at (6,-4) {\small{$\phi$-mixing}};
    \draw \firstellipse node at (1,-4) {\small{$\alpha$-mixing}};
    \draw \secondellipse node at (11,-4) {\small{$\ca C$-mixing}};
  \end{scope}
\end{tikzpicture}
\caption{Relationship between $\alpha$-, $\phi$-, and $\ca C$-mixing processes}
\label{Relationship2}
\end{center}
\end{figure}

\begin{example}
For stationary, geometrically $\alpha$-mixing Markov chains 
with centered and bounded random variables, \cite{Adamczak08a} 
bounds the   
   left-hand side of \eqref{bernsteininequality1}  by
\begin{align}
 \exp \left( - \frac{n \varepsilon^2}{\tilde \sigma^2 + \varepsilon B \log n} \right),
\label{Adamczak}
\end{align}
where $\tilde \sigma^2 = \lim_{n \to \infty} \frac{1}{n} \mathrm{Var} \sum_{i=1}^n X_i$. 
By a similar argument as in Example \ref{alphaexample} we obtain
\begin{align*}
\mathrm{Var} \sum_{i=1}^n X_i
= n \sigma^2 + 2 \sum_{1 \leq i < j \leq n} |\mathrm{cov}(X_i, X_j)|
\leq n \sigma^2 + \tilde{C}_{\delta} n \|X_1\|_{2+\delta}^2
\end{align*}
for an arbitrary $\delta > 0$ and a constant $\tilde{C}_{\delta}$ depending 
only on $\delta$. Consequently we conclude that 
modulo some arbitrary small number $\delta > 0$ and the logarithmic factor $\log n$ 
instead of $(\log n)^2$, the bound (\ref{Adamczak}) coincides with ours.
Again, this bound and our result are not comparable, see Figure \ref{Relationship2}.
\end{example}

\begin{example}
For stationary, weakly dependent processes
of centered and bounded
random variables with $|\mathrm{cov}(X_1, X_n)| \leq c \cdot \exp(- b n)$ for some $c, b > 0$ and all $n \geq 1$,
  \cite[Theorem 2.1]{KaNe06a}  bounds the   left-hand side of \eqref{bernsteininequality1} by
\begin{align}
\exp \left( - \frac{\varepsilon^2 n}{C_1 + C_2 \varepsilon^{5/3} n^{2/3}} \right)
\label{KaNeBound}
\end{align}
where $C_1$ is some constant depending on $c$ and $b$, 
and $C_2$ is some constant depending on $c$, $b$, and $B$.
Note that the denominator in (\ref{KaNeBound}) is at least $C_1$, and therefore 
the bound (\ref{KaNeBound}) is  more of Hoeffding type.
\end{example}

\section{Applications to Statistical Learning} \label{applications}

\allowdisplaybreaks

In this section, we apply the Bernstein inequality from the last section to 
deduce oracle inequalities for some widely used  
learning methods and observations generated by a 
geometrically $\ca C$-mixing processes.
More precisely, in Subsection \ref{CRERMapplications}, we recall some basic concepts of statistical learning
and formulate an oracle inequality for learning methods that are based on (regularized) empirical risk minimization. Then, 
in the Subsection \ref{SVMapplications}, we illustrate this oracle inequality 
by deriving the learning rates for SVMs.
Finally, in Subsection \ref{FDS}, we present an oracle inequality 
for forecasting of dynamical systems.

\subsection{Oracle inequality for CR-ERMs} \label{CRERMapplications}

In this section, let $X$ always be a measurable space if not mentioned otherwise 
and $Y \subset \mathbb{R}$ always be a closed subset. 
Recall that in the (supervised) statistical learning,
our aim is to find a function $f : X \to \mathbb{R}$ 
such that for $(x, y) \in X \times Y$ the value $f(x)$ is 
a good prediction of $y$ at $x$. 
To evaluate the quality of such functions $f$, 
we need a loss function $L : X \times Y \times \mathbb{R} \to [0, \infty)$ that is measurable.
Following \cite[Definition 2.22]{StCh08a}, we say that a loss 
$L$ can be clipped at $M > 0$, if, for all 
$(x, y, t) \in X \times Y \times \mathbb{R}$, we have
\begin{align}
L(x, y, \wideparen{t} \,) \leq L(x, y, t), 
\label{clippedloss}
\end{align}
where $\wideparen{t}$ denotes the clipped value of $t$ at $\pm M$, that is
$\wideparen{t} := t$ if $t \in [-M, M]$, 
$\wideparen{t} := -M$ if $t < -M$, 
$\wideparen{t} := M$ if $t > M$.
Various often used loss functions can be clipped. 
For example, if $Y := \{ - 1, 1 \}$ and $L$ is a convex, margin-based loss represented by 
$\varphi : \mathbb R \rightarrow [0, \infty)$, 
that is $L(y, t) = \varphi(y t)$ for all $y \in Y$ and $t \in \mathbb R$, then $L$ can be 
clipped, if and only if $\varphi$ has a global minimum, see \cite[Lemma 2.23]{StCh08a}. 
In particular, the hinge loss, the least squares loss 
for classification, and the squared hinge loss can be clipped, 
but the logistic loss for classification and the AdaBoost loss cannot be clipped. 
Moreover, if $Y := [- M, M]$ and $L$ is a convex, 
distance-based loss represented by some $\psi: \mathbb R \rightarrow [0, \infty)$, 
that is $L(y, t) = \psi(y - t)$ for all $y \in Y$ and $t \in \mathbb R$, 
then $L$ can be clipped whenever $\psi(0) = 0$, see again \cite[Lemma 2.23]{StCh08a}. 
In particular, the least squares loss
\begin{align}
L(y, t) = (y - t)^2     
\label{lsloss} 
\end{align}
and the $\t$-pinball loss 
\begin{align}
L_\t(y,t) := \psi(y-t) = 
\begin{cases}
- (1 - \tau) (y - t), & \text{if } y - t < 0 \\ 
\tau (y - t), & \text{if } y - t \geq 0 
\end{cases}
\label{pbloss} 
\end{align}
used for quantile regression can be clipped, if the space of labels $Y$ is bounded. 

Now we summarize assumptions on the loss function $L$ that will be used
throughout this work.

\begin{assumption} \label{assumptionL}
 The loss function $L : X \times Y \times \mathbb R \rightarrow [0, \infty)$ 
can be clipped at some $M > 0$. Moreover, it is both bounded 
in the sense of $L(x, y, t) \leq 1$ and locally Lipschitz continuous, that is, 
\begin{align}
|L(x, y, t) - L(x, y, t')| \leq |t - t'|\, . 
\label{lipschitz}
\end{align}
Here both inequalites are supposed to hold for all $(x, y) \in X \times Y$ and $t, t' \in [- M, M]$.
Note that the former assumption can typically be enforced by scaling.
\end{assumption}

Given a loss function $L$ and an $f : X \rightarrow \mathbb R$, 
we often use the notation $L \circ f$ for the function $(x, y) \mapsto L(x, y, f(x))$. 
Our major goal is to have a small average loss for future unseen observations $(x, y)$. 
This leads to the following definition, see also \cite[Definitions 2.2 \& 2.3]{StCh08a}.

\begin{definition}
Let $L : X \times Y \times \mathbb R \rightarrow [0, \infty)$ be a loss function and 
$P$ be a probability measure on $X \times Y$. 
Then, for a measurable function $f : X \rightarrow \mathbb R$ the $L$-risk is defined by
\begin{align*}
\mathcal R_{L, P} (f) := \int\limits_{X \times Y} L(x, y, f(x)) \, d P(x, y).
\end{align*}
Moreover, the minimal $L$-risk
\begin{align*}
\mathcal R_{L, P}^* := \inf \{ \mathcal R_{L, P} (f) | f : X \rightarrow \mathbb R ~ \textrm{measurable} \}
\end{align*}
is called the Bayes risk with respect to $P$ and $L$. In addition, a measurable function $f_{L, P}^* : X \rightarrow \mathbb R$ 
satisfying $\mathcal R_{L, P} (f_{L, P}^*) = \mathcal R_{L, P}^*$ is called a Bayes decision function. 
\end{definition}

Informally, the goal of learning from a training set $D\in (X\times Y)^n$ is to find a decision function $f_D$ such that $\mathcal R_{L, P} (f_D)$ is close to the 
minimal risk $\mathcal R_{L, P}^*$. Our next goal is to formalize this idea. We begin with the following definition.

\begin{definition}
Let $X$ be a set and $Y \subset \mathbb R$ be a closed subset. A learning method $\mathcal L$ on $X \times Y$ maps every set 
$D \in (X \times Y)^n$, $n \geq 1$, to a function $f_D : X \rightarrow \mathbb R$.
\end{definition}

Let us now describe the learning algorithms we are interested in. To this end, we assume that we have a hypothesis set $\mathcal F$ consisting 
of bounded measurable functions $f : X \rightarrow \mathbb R$, which is pre-compact with respect to the supremum norm $\| \cdot \|_{\infty}$. 
Since $\mathcal F$ can be infinite, we need to recall the following, classical concept, which will enable us to approximate infinite $\mathcal F$ 
by finite subsets.

\begin{definition}
Let $(T, d)$ be a metric space and $\varepsilon > 0$. We call $S \subset T$ an $\varepsilon$-net of $T$ if for all $t \in T$ there exists an 
$s \in S$ with $d(s, t) \leq \varepsilon$. Moreover, the $\varepsilon$-covering number of $T$ is defined by
\begin{align*}
\mathcal N (T, d, \varepsilon) := \inf \left\{ n \geq 1 : \exists s_1, \ldots, s_n \in T ~ \textrm{such that} ~ T \subset \bigcup_{i=1}^n 
B_d(s_i, \varepsilon) \right\},
\end{align*}
where $\inf \emptyset := \infty$ and $B_d(s, \varepsilon) := \{ t \in T : d(t, s) \leq \varepsilon \}$ denotes the closed ball with center 
$s \in T$ and radius $\varepsilon$.
\end{definition}

Note that our hypothesis set $\mathcal F$ is assumed to be pre-compact, and hence for all $\varepsilon > 0$, 
the covering number $\mathcal N (\mathcal F, \| \cdot \|_{\infty}, \varepsilon)$ is finite.

In order to introduce our generic learning algorithms, we write
\begin{displaymath}
   D := \bigl( (X_1, Y_1), \ldots, (X_n, Y_n) \bigr) := (Z_1, \ldots, Z_n) \in (X \times Y)^n
\end{displaymath}
for a training set of length $n$ 
that is distributed according to the first $n$ components of 
the $X \times Y$-valued process
$\mathcal Z = (Z_i)_{i\geq1}$. Furthermore,
we write $D_n := \frac{1}{n} \sum_{i=1}^n \delta_{(X_i, Y_i)}$, 
where $\delta_{(X_i, Y_i)}$ denotes the (random) Dirac measure at $(X_i, Y_i)$. 
In other words, $D_n$ is the empirical measure associated to the data set $D$. Finally, 
the  risk of a function $f : X \to \mathbb R$ with respect to this measure
\begin{align*}
\mathcal R_{L, D_n} (f) = \frac{1}{n} \sum_{i=1}^n L(X_i, Y_i, f(X_i))
\end{align*}
is called the empirical $L$-risk.

With these preparations we can now introduce the class of learning methods we are interested in, 
see also \cite[Definition 7.18]{StCh08a}.

\begin{definition}\label{crerm}
Let $L : X \times Y \times \mathbb{R} \to [0, \infty)$ be a loss that can be clipped at some $M > 0$, 
$\mathcal F$ be a hypothesis set, that is, a set of measurable functions $f : X \rightarrow \mathbb R$, with $0 \in \mathcal F$, and
$\Upsilon$ be a regularizer on $\ca F$, that is, 
a function $\Upsilon : \mathcal F \rightarrow [0, \infty)$ with $\Upsilon(0) = 0$.
Then, for $\delta \geq 0$, a learning method whose decision functions $f_{D_n, \Upsilon} \in \mathcal F$ satisfy
\begin{align}
\Upsilon (f_{D_n, \Upsilon}) + \mathcal R_{L, D_n} (\wideparen{f}_{D_n, \Upsilon}) \leq \inf_{f \in \mathcal F} \left( \Upsilon(f) +
\mathcal R_{L, D_n} (f) \right) + \delta \label{deltaCRERM}
\end{align}
for all $n \geq 1$ and $D_n \in (X \times Y)^n$ is called $\delta$-approximate clipped regularized empirical risk minimization 
($\delta$-CR-ERM)
with respect to $L$, $\mathcal F$, and $\Upsilon$.

Moreover, in the case $\delta = 0$, we simply speak of clipped regularized empirical risk minimization (CR-ERM).
\end{definition}

Note that on the right-hand side of (\ref{deltaCRERM}) the unclipped loss is considered, 
and hence CR-ERMs do not necessarily minimize the regularized clipped empirical risk 
$\Upsilon (\cdot) + \mathcal R_{L, D_n} (\wideparen{\cdot})$. 
Moreover, in general CR-ERMs do not minimize the regularized risk 
$\Upsilon (\cdot) + \mathcal R_{L, D_n} (\cdot)$ either, 
because on the left-hand side of (\ref{deltaCRERM}) the clipped function is considered. 
However, if we have a minimizer of the unclipped regularized risk, then it automatically satisfies 
(\ref{deltaCRERM}). 
As an example of CR-ERMs, SVMs will be discussed in Section \ref{SVMapplications}.

Before we present the oracle inequality for $\delta$-CR-ERMs, we need to introduce a few more notations. 
Let $\mathcal F$ be a hypothesis set in the sense of Definition \ref{crerm}. For
\begin{align}
r^* := \inf_{f \in \mathcal F} \Upsilon(f) + \mathcal R_{L,P}(\wideparen{f} \,) - \mathcal R_{L,P}^* 
\label{rstar}
\end{align}
and $r > r^*$, we write
\begin{align}
\mathcal F_r := \left\{ f \in \mathcal F : \Upsilon(f) + \mathcal R_{L,P}(\wideparen{f}\,) - \mathcal R_{L,P}^* \leq r \right\}.
\label{Fr}
\end{align}
Then we have $r^* \leq 1$, since $L(x, y, 0) \leq 1$, $0 \in \mathcal F$, and $\Upsilon(0) = 0$.  
Furthermore, 
we assume that we have  a monotonic decreasing sequence $(A_r)_{r\in(0,1]}$
such that 
\begin{align}
 \|L \circ \wideparen{f}\| 
\leq A_r 
\,\,\,\,
\text{ for all }
f \in \mathcal F_r
\text{ and }
r \in (0, 1]\, ,
\label{lipfr}
\end{align}
where $\snorm \cdot$ is a semi-norm satisfying  \eqref{expproperty}.
Because of the definition (\ref{Fr}), 
it is easily to conclude that $\|L \circ \wideparen{f}\| \leq A_1$ 
for all $f \in \mathcal F_r$ and $r \in (0, 1]$.
Finally, we assume that 
there exists
a function $\varphi: (0, \infty) \rightarrow (0, \infty)$ 
and a $p \in (0, 1]$ such that,
for all $r > 0$ and $\varepsilon > 0$, 
we have 
\begin{align}
\ln \mathcal N (\mathcal F_r, \| \cdot \|_{\infty}, \varepsilon) \leq \varphi(\varepsilon) r^p.
\label{coveringnumber}
\end{align}
Note that there are actually many hypothesis sets  satisfying Assumption 
(\ref{coveringnumber}), see \cite[Section 4]{HaSt14a} for some examples.

Now the oracle inequality for CR-ERMs reads as follows:

\begin{theorem} \label{oracleinequality}
Let $\mathcal Z := (Z_n)_{n \geq 0}$ be a $Z$-valued stationary 
geometrically (time-reversed) $\ca C$-mixing process on
$(\Omega, \mathcal A, \mu)$ 
with rate $(d_n)_{n \geq 0}$ as in (\ref{dn}),
$\snorm\cdot_{\ca C}$ be defined by \eqref{lambdanorm} for some semi-norm $\snorm\cdot$
satisfying (\ref{expproperty}),
and $P := \mu_{Z_0}$.
Moreover, let $L$ be a loss satisfying Assumption \ref{assumptionL}.
In addition, assume that there exist a Bayes decision function $f_{L, P}^*$ 
and constants $\vartheta \in [0, 1]$ and 
$V \geq 1$ such that
\begin{align}
\mathbb E_P (L \circ \wideparen{f} - L \circ f_{L, P}^*)^2 
\leq V \cdot \left( \mathbb E_P (L \circ \wideparen{f} - L \circ f_{L, P}^*) \right)^{\vartheta}, 
~~~~~~  f \in \mathcal F, 
\label{variancebound}
\end{align}
where $\mathcal F$ is a hypothesis set with $0 \in \mathcal F$. 
We define $r^*$, $\mathcal F_r$, and $A_r$ by 
(\ref{rstar}), (\ref{Fr}), and (\ref{lipfr}), respectively 
and assume that (\ref{coveringnumber}) is satisfied. 
Finally, let $\Upsilon : \mathcal F \rightarrow [0, \infty)$ be a regularizer with $\Upsilon(0) = 0$, 
$f_0 \in \mathcal F$ be a fixed function, and
$A_0, A^* \geq 0$, $B_0 \geq 1$ be constants such that 
$\|L \circ f_0\| \leq A_0$, 
$\|L \circ \wideparen{f}_0\| \leq A_0$, 
$\|L \circ f_{L,P}^*\| \leq A^*$ and
$\| L \circ f_0 \|_{\infty} \leq B_0$. 
Then, for all fixed $\varepsilon > 0$, $\delta \geq 0$, $\tau \geq 1$, and
\begin{align}
n \geq n_0^* := \max \left\{ \min \left\{ m \geq 3 : 
m^2 \geq K \text{ and } 
\frac{m}{(\log m)^{\frac{2}{\gamma}}} \geq 4 \right\},
e^{\frac{3}{b}} \right\}
\label{nzerostar}
\end{align}
with $K = 1212 c (4 A_0 + A^* + A_1 + 1)$, and $r \in (0, 1]$ satisfying
\begin{align}
r \geq \max \left\{ 
\left( \frac{c_V (\log n)^{\frac{2}{\gamma}} (\tau + \varphi(\varepsilon/2) 2^p r^p)}{n} \right)^{\frac{1}{2-\vartheta}}, 
\frac{20 (\log n)^{\frac{2}{\gamma}} B_0 \tau}{n}, r^* \right\} 
\label{minradius}
\end{align}
with $c_V := 512 (12 V + 1)/3$, 
every learning method defined by (\ref{deltaCRERM}) satisfies with probability $\mu$ not less than $1 - 16 e^{-\tau}$:
\begin{align}
\Upsilon(f_{D_n, \Upsilon}) + \mathcal R_{L, P} (\wideparen{f}_{D_n, \Upsilon}) - \mathcal R_{L, P}^*
< 2 \Upsilon(f_0) + 4 \mathcal R_{L, P}(f_0) - 4 \mathcal R_{L, P}^* + 4 r + 5 \varepsilon + 2 \delta.
\label{oracleinequalityy}
\end{align}
\end{theorem}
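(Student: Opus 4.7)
The plan is to combine the defining inequality \eqref{deltaCRERM} of a $\d$-CR-ERM with two applications of Theorem~\ref{bernsteininequality}: one for the reference function $f_0$, and one uniform over the sub-level set $\ca F_r$. Subtracting $\mathcal R_{L,D_n}(f_{L,P}^*)$ from both sides of \eqref{deltaCRERM} reduces matters to bounding from above (i) $\mathcal R_{L,D_n}(f_0)-\mathcal R_{L,D_n}(f_{L,P}^*)$ in terms of $\mathcal R_{L,P}(f_0)-\mathcal R_{L,P}^*$, and (ii) $\mathcal R_{L,P}(\wideparen f)-\mathcal R_{L,D_n}(\wideparen f)$ uniformly over $f\in\ca F_r$, with $r$ as in \eqref{minradius}. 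The choice of $n_0^*$ in \eqref{nzerostar} is designed precisely so that the condition \eqref{nzero} of Theorem~\ref{bernsteininequality} holds simultaneously for every function fed into it, with appropriate choices of $A$ and $B$.

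For step (i), I would apply \eqref{bernsteininequality2} to $L\circ f_0-\E_P L\circ f_0$ using the hypotheses $\snorm{L\circ f_0}\le A_0$, $\inorm{L\circ f_0}\le B_0$, and the trivial variance bound $\sigma^2\le B_0\cdot\E_P L\circ f_0$, and analogously to $L\circ f_{L,P}^*-\mathcal R_{L,P}^*$ with $\snorm{L\circ f_{L,P}^*}\le A^*$, $\inorm{\cdot}\le 1$, $\sigma^2\le\mathcal R_{L,P}^*$. Summing the two deviations and absorbing the $\sqrt{\,\cdot\,}$ contributions via the AM-GM estimate $\sqrt{ab}\le a+b/4$ yields, with probability at least $1-4e^{-\tau}$, a control of the form $\mathcal R_{L,D_n}(f_0)-\mathcal R_{L,D_n}(f_{L,P}^*) \le 2(\mathcal R_{L,P}(f_0)-\mathcal R_{L,P}^*)+\mathrm{const}\cdot(\log n)^{2/\g}B_0\tau/n$. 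Together with a further doubling produced by step (ii) below, this accounts for the $2\Upsilon(f_0)+4\mathcal R_{L,P}(f_0)-4\mathcal R_{L,P}^*$ contribution in \eqref{oracleinequalityy}.

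For step (ii), assumption \eqref{coveringnumber} furnishes an $\e/2$-net $\ca F_r'$ of $\ca F_r$ in $\inorm\cdot$ of cardinality at most $\exp(\p(\e/2)r^p)$. For each $f'\in\ca F_r'$, I would apply Theorem~\ref{bernsteininequality} to the centered variable $L\circ\wideparen{f'}-L\circ f_{L,P}^*$, where \eqref{variancebound} together with the membership $f'\in\ca F_r$, which implies $\mathcal R_{L,P}(\wideparen{f'})-\mathcal R_{L,P}^*\le r$, gives $\sigma^2\le Vr^\vt$, \eqref{lipfr} gives the semi-norm bound $A_1+A^*$, and boundedness of $L$ gives the supremum bound $2$. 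A union bound at confidence $\tau+\p(\e/2)r^p$, together with the Lipschitz property \eqref{lipschitz} and the $1$-Lipschitzness of clipping, which absorb the net-discretization error into $\e$, then yields, after the variance-splitting trick $\sqrt{Vr^\vt(\tau+\p(\e/2)r^p)/n}\le\tfrac14(\mathcal R_{L,P}(\wideparen f)-\mathcal R_{L,P}^*)+O(r)$ (justified by the lower bound \eqref{minradius} on $r$), a uniform inequality of the shape $\mathcal R_{L,P}(\wideparen f)-\mathcal R_{L,D_n}(\wideparen f)\le\tfrac12(\mathcal R_{L,P}(\wideparen f)-\mathcal R_{L,P}^*)+r/2+\e$ for all $f\in\ca F_r$, with probability at least $1-\mathrm{const}\cdot e^{-\tau}$.

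Combining (i) and (ii) via \eqref{deltaCRERM} and solving for $\mathcal R_{L,P}(\wideparen f_{D_n,\Upsilon})-\mathcal R_{L,P}^*$ yields \eqref{oracleinequalityy}; the bootstrap aspect is that the lower bound on $r$ in \eqref{minradius} is exactly the threshold at which all concentration terms can be absorbed into $4r$ on the right-hand side. The main technical obstacle is the bookkeeping in step (ii): one has to verify, using the specific constants $c_V=512(12V+1)/3$ and $K=1212c(4A_0+A^*+A_1+1)$ appearing in \eqref{minradius} and \eqref{nzerostar}, that the hypotheses of Theorem~\ref{bernsteininequality} are met simultaneously for every element of the net, that the failure probabilities aggregate to $16e^{-\tau}$, and that the net-approximation errors together with the AM-GM splits sum to exactly the $5\e$ on the right-hand side of \eqref{oracleinequalityy}. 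Overall the scheme parallels the i.i.d.\ argument of \cite[Theorem~7.20]{StCh08a} and the $\alpha$-mixing version in \cite{HaSt14a}, with Theorem~\ref{bernsteininequality} replacing the Bernstein inequality used there.
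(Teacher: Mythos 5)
Your overall architecture — start from the CR-ERM inequality \eqref{deltaCRERM}, concentrate two deviation terms via Theorem~\ref{bernsteininequality}, then solve — is the right one and matches the paper at the top level, but there are two substantive gaps.

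First, in step~(i), you propose applying Bernstein separately to $L\circ f_0$ (with $\sigma^2\le B_0\,\E_P L\circ f_0$) and to $L\circ f_{L,P}^*$ (with $\sigma^2\le \mathcal R_{L,P}^*$). The AM--GM absorption then leaves an additive contribution proportional to $\mathcal R_{L,P}(f_0)+\mathcal R_{L,P}^*$, \emph{not} to the excess risk $\mathcal R_{L,P}(f_0)-\mathcal R_{L,P}^*$; so you cannot reach the claimed intermediate bound, and the final right-hand side would pick up an extraneous $\mathcal R_{L,P}^*$ term that is not in \eqref{oracleinequalityy}. The paper instead works throughout with $h_{f_0}=L\circ f_0-L\circ f_{L,P}^*$, splits it into the nonnegative part $h_{f_0}-h_{\wideparen f_0}=L\circ f_0-L\circ\wideparen f_0\in[0,B_0]$ (so $\sigma^2\le B_0\,\E_P(h_{f_0}-h_{\wideparen f_0})$) and the clipped part $h_{\wideparen f_0}$ (so that the localization \eqref{variancebound} gives $\sigma^2\le V(\E_P h_{\wideparen f_0})^\vartheta$), and applies Bernstein to each; both variance bounds involve only excess-risk quantities, which is what keeps $\mathcal R_{L,P}^*$ from appearing additively.

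Second, and more fundamentally, your step~(ii) covers only the ball $\ca F_r$ with an $\e/2$-net, aiming for a uniform bound valid for $f\in\ca F_r$. But a priori $f_{D_n,\Upsilon}$ is not known to lie in $\ca F_r$; the conclusion \eqref{oracleinequalityy} only places it in a ball of radius roughly $4r+4\mathcal R_{L,P}(f_0)-4\mathcal R_{L,P}^*+\cdots$, which can exceed $r$. Gesturing at a ``bootstrap'' does not close this: a single-shell net gives no control over the empirical process at $f_{D_n,\Upsilon}$ whenever $f_{D_n,\Upsilon}\notin\ca F_r$. The paper resolves this by a peeling argument: it partitions $\ca F_1$ into shells $\ca F_r,\ \ca F_{2r}\setminus\ca F_r,\dots,\ca F_{2^{K+1}r}\setminus\ca F_{2^K r}$ (with $2^{K+1}r>1$, hence $\ca F_1\subset\bigcup_k$), introduces the normalized quotient $g_{f,r}=(\E_P h_{\wideparen f}-h_{\wideparen f})/(\Upsilon(f)+\E_P h_{\wideparen f}+r)$, and bounds $\sup_{f}\E_{D_n}g_{f,r}$ over a net of all of $\ca F_1$; the shell-dependent variance $V(2^k r)^\vartheta$ is exactly compensated by the growing denominator, and the tail probabilities decay geometrically in $k$. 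That normalization-plus-peeling device — not a single-shell union bound — is the key mechanism by which the constants $c_V$, $K$, and the factor $16$ fall out. Without it, the argument as written does not prove the theorem.
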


Let us briefly discuss the variance bound (\ref{variancebound}). 
For example, if $Y = [- M, M]$ and $L$ is the least squares loss, 
then it is well-known that (\ref{variancebound}) is satisfied for 
$V := 16 M^2$ and $\vartheta = 1$, see e.g. \cite[Example 7.3]{StCh08a}. 
Moreover, under some assumptions on the distribution $P$, 
\cite{StCh11a} established a variance bound of the form (\ref{variancebound}) 
for the   pinball loss used for quantile regression. 
In addition, for the hinge loss, (\ref{variancebound}) is satisfied for 
$\vartheta := q / (q + 1)$, if Tsybakov's noise assumption 
\cite{Tsybakov04a} holds for 
$q$, see \cite[Theorem 8.24]{StCh08a}. Finally, based on \cite{BlLuVa03a}, \cite{Steinwart09a} established a variance bound with 
$\vartheta = 1$ for the earlier mentioned clippable modifications of strictly convex, twice continuously differentiable margin-based loss 
functions. 

One might wonder, why the constants $A_0$ and $B_0$ are necessary in Theorem \ref{oracleinequality}, 
since it appears to add further complexity. 
However, a closer look reveals that the constants $A_1$ and $B$ are the bounds
for functions of the form $L \circ \wideparen{f}$, 
while $A_0$ and $B_0$ are valid for the function $L \circ f_0$ for an {\em unclipped} $f_0 \in \mathcal F$. 
Since we do not assume that all $f \in \mathcal F$ satisfy $\wideparen{f} = f$, 
we conclude that in general $A_0$ and $B_0$ are necessary.

The following lemma 
 shows that the required bounds on $\snorm{L\circ f}$ do hold for specific loss functions, if  $\ca C = \mathrm{Lip}$ and 
the involved functions $f\in \ca F$ are Lipschitz, too.

\begin{lemma} \label{lipschitzlemma}
Let $(X, d)$ be a metric space, $Y \subset [-M, M]$ with $M > 0$.
Moreover, let 
 $f : X \to \mathbb R$ be a bounded, Lipschitz continuous function. Then the following statements hold true:
\begin{enumerate}
 \item[(i)] For the least square loss $L$, see (\ref{lsloss}), we have
\begin{align*}
 |L \circ f|_1 \leq 2\sqrt{2} \left( M + \|f\|_{\infty} \right) (1 + |f|_1).
\end{align*}
 \item[(ii)] For the $\t$-pinball loss $L$, see  (\ref{pbloss}), we have
\begin{align*}
 |L \circ f|_1 \leq \sqrt{2} (1 + |f|_1).
\end{align*}
\end{enumerate}
\end{lemma}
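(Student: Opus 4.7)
The plan is to bound $|L\circ f|_1$ directly from the definition of the H\"older/Lipschitz seminorm by estimating $|L(x,y,f(x)) - L(x',y',f(x'))|$ and pulling out a factor of $d((x,y),(x',y')) = \sqrt{d_X(x,x')^2+(y-y')^2}$, where $d_X$ is the underlying metric on $X$.

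For part (i), I first observe that the least squares loss factors as a difference of squares, so
\[
|L(y,t)-L(y',t')| = |(y-t)-(y'-t')|\cdot|(y-t)+(y'-t')|.
\]
The second factor is uniformly bounded by $2(M+\|f\|_\infty)$ whenever $t,t'\in f(X)\subset[-\|f\|_\infty,\|f\|_\infty]$ and $y,y'\in[-M,M]$, while the first factor is at most $|y-y'|+|f(x)-f(x')| \leq |y-y'|+|f|_1\, d_X(x,x')$. Combining the two gives
\[
|L(y,f(x))-L(y',f(x'))| \leq 2(M+\|f\|_\infty)\bigl(|y-y'|+|f|_1\, d_X(x,x')\bigr),
\]
and then Cauchy--Schwarz in $\R^2$, namely $|a|+|b|\leq\sqrt{2}\sqrt{a^2+b^2}$, upgrades the right-hand side to $2\sqrt{2}(M+\|f\|_\infty)\max\{1,|f|_1\}\, d((x,y),(x',y'))$. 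Replacing $\max\{1,|f|_1\}$ by $1+|f|_1$ yields the claim.

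For part (ii), the pinball loss is $L_\tau(y,t)=\psi(y-t)$ with $\psi$ piecewise linear of slopes $\tau$ and $-(1-\tau)$, hence $1$-Lipschitz (since $\max\{\tau,1-\tau\}\leq 1$). Therefore
\[
|L_\tau(y,f(x))-L_\tau(y',f(x'))| \leq |(y-y')-(f(x)-f(x'))| \leq |y-y'|+|f|_1\, d_X(x,x'),
\]
and the same Cauchy--Schwarz trick as above produces the constant $\sqrt{2}(1+|f|_1)$.

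No step is genuinely hard; the only thing to keep straight is the convention that the product metric on $X\times Y$ is the Euclidean one, which is responsible for the $\sqrt{2}$ factors. If instead one uses the sum metric $d_X(x,x')+|y-y'|$, the $\sqrt{2}$ disappears, so I would state at the start which metric is being used and note that this is the reason for the $\sqrt{2}$. Nothing else requires any additional input beyond the elementary identity $(a-b)(a+b)=a^2-b^2$ for the least squares case and the explicit Lipschitz constant of $\psi$ for the pinball case.
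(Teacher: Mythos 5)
Your proof is correct and follows essentially the same route as the paper's: for (i) factor the difference of squares, bound each factor, invoke Lipschitz continuity of $f$, and finish with $a+b\leq\sqrt{2}\sqrt{a^2+b^2}$. The only difference is in (ii), where you use the global $1$-Lipschitz bound for $\psi$ (since $\max\{\tau,1-\tau\}\leq 1$) directly, whereas the paper derives the same intermediate inequality $|L\circ f(x,y)-L\circ f(x',y')|\leq|(y-y')-(f(x)-f(x'))|$ by a four-case split on the signs of $y-f(x)$ and $y'-f(x')$; your version is slightly cleaner but not substantively different. Your remark about the Euclidean product metric on $X\times Y$ being the source of the $\sqrt{2}$ factors is consistent with the paper's final step, which uses $\|(x,y)-(x',y')\|_2$.
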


\subsection{Learning rates for SVMs} \label{SVMapplications}

Let us begin by briefly recalling SVMs, see \cite{StCh08a} for details. 
To this end, let $X$ be a measurable space, $Y := [-1, 1]$ and $k$ be a 
measurable (reproducing) 
kernel on $X$ with reproducing kernel Hilbert space (RKHS) $H$. 
Given a regularization parameter $\lambda > 0$ and a convex loss 
$L$, SVMs find the unique solution
\begin{align}
f_{D_n, \lambda} = \textrm{arg} \min_{f \in H} \left( \lambda \| f \|_H^2 + \mathcal R_{L, D_n} (f) \right).
\label{svmsolution}
\end{align}
In particular, SVMs using the least-squares loss (\ref{lsloss}) are called least-squares SVMs (LS-SVMs), while SVMs using the 
$\t$-pinball loss (\ref{pbloss}) are called SVMs for quantile regression. 

Note that SVM decision functions (\ref{svmsolution}) satisfy (\ref{deltaCRERM}) 
for the regularizer $\Upsilon := \lambda \|\cdot\|_H^2$ and 
$\delta : = 0$. In other words, SVMs are CR-ERMs. 
Consequently we can use the oracle inequality in Theorem \ref{oracleinequality}
to derive the learning rates for SVMs.

Assumption \ref{assumptionL} implies that 
\begin{align*}
 \lambda \| f_{D_n, \lambda} \|_H^2
\leq \lambda \| f_{D_n, \lambda} \|_H^2 + \mathcal R_{L, D_n} (f)
= \min_{f \in H} \left( \lambda \| f \|_H^2 + \mathcal R_{L, D_n} (f) \right)
\leq \mathcal R_{L, D_n} (0)
\leq 1.
\end{align*}
In other words, for a fix $\lambda > 0$, we have 
\begin{align}
 f_{D_n, \lambda} \in \lambda^{-1/2} B_H, 
\label{hyposet}
\end{align}
where $B_H$ denotes the closed unit ball of the RKHS $H$. 

In the following, we are mainly interested in the commonly used Gaussian RBF kernels 
$k_{\sigma} : X \times X \to \mathbb R$ defined by
\begin{align*}
k_{\sigma} (x, x') := \exp \left( - \frac{\|x - x'\|_2^2}{\sigma^2} \right), 
\,\,\,\,\,\,\,\,\,\,\,\,\,\,\,\,
x, x' \in X,
\end{align*}
where $X \subset \mathbb R^d$ is a nonempty subset and $\sigma > 0$ is a free parameter called the width. 
We write $H_{\sigma}$ for the corresponding RKHSs, which are described in some detail in \cite{StHuSc06a}.
The entropy numbers for Gaussian kernels \cite[Theorem 6.27]{StCh08a} and the
equivalence of covering and entropy numbers \cite[Lemma 6.21]{StCh08a} yield that
\begin{align}
\ln \mathcal N (B_{H_\sigma}, \| \cdot \|_{\infty}, \varepsilon) 
\leq a \sigma^{-d} \varepsilon^{- 2 p}, 
\,\,\,\,\,\,\,\,\,\,\,\,\,\,\,\,
\varepsilon > 0,
\label{covernumber3}
\end{align}
for some constants $a > 0$ and $p \in (0, 1)$.

Because of (\ref{hyposet}), we can choose the hypothesis set as
$\mathcal F = \lambda^{-1/2} B_{H_\sigma}$.
Then the definition (\ref{Fr}) implies that 
$\mathcal F_r \subset r^{1/2} \lambda^{-1/2} B_{H_\sigma}$ and consequently we have
\begin{align*}
\ln \mathcal N (\mathcal F_r, \| \cdot \|_{\infty}, \varepsilon) 
\leq a \sigma^{-d} \lambda^{-p} \varepsilon^{- 2 p} r^{p}, 
\end{align*}
and thus, for the function $\varphi$ in Theorem \ref{oracleinequality}, we can choose
\begin{align}
\varphi(\varepsilon) := 
a \sigma^{-d} \lambda^{- p} \varepsilon^{- 2 p}.
\label{varphi}
\end{align}

Now, with some additional assumptions below, we can use the oracle inequality in Theorem \ref{oracleinequality}
to derive the learning rates for the SVMs using Gaussian kernels. 
In the following, $B_{2s, \infty}^t$ denotes the usual Besov space with the smoothness parameter $t$, 
more details see \cite[Section 2]{EbSt11a}.

\begin{theorem}[Least Square Regression with Gaussian Kernels]   \label{LSSVMgaussiankernels}
Let $Y := [- M, M]$ for $M > 0$, and $P$ be a distribution on $\mathbb R^d \times Y$ such that 
$X := \mathrm{supp} P_X \subset B_{\ell_2^d}$ is a bounded domain with $\mu(\partial X) = 0$,
where $B_{\ell_2^d}$ denotes the closed unit ball of 
$d$-dimensional Euclidean space $\ell_2^d$.
Furthermore, let $P_X$ be absolutely continuous w.r.t. the Lebesgue measure $\mu$ on $X$ with associated density 
$g : \mathbb R^d \rightarrow \mathbb R$ such that $g \in L_{q}(X)$ for some $q \geq 1$.
Moreover, let $f^*_{L, P} : \mathbb R^d \rightarrow \mathbb R$ be a Bayes decision function such that 
$f^*_{L, P} \in L_2(\mathbb R^d) \cap \mathrm{Lip}(\mathbb R^d)$ as well as
$f^*_{L, P} \in B_{2s, \infty}^t$ for some $t \geq 1$ and $s \geq 1$ with $\frac{1}{q} + \frac{1}{s} = 1$.
Then, for all $\xi > 0$, the LS-SVM using Gaussian RKHS $H_{\sigma}$ and
\begin{align}
\lambda_n = n^{- 1} ~~~~ \textrm{and} ~~~~ 
\sigma_n = n^{- \frac{1}{2 t + d}} \ ,    \label{lambdagamma}
\end{align}
learns with rate 
\begin{align}
n^{- \frac{2 t}{2 t + d} + \xi}\, .
\label{lrlssvmgaussian}
\end{align}
\end{theorem}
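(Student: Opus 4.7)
The plan is to invoke the oracle inequality of Theorem~\ref{oracleinequality} with $L$ the least squares loss (scaled so that $L\leq 1$), regularizer $\Upsilon(f)=\lambda\|f\|_{H_\sigma}^2$, and hypothesis set $\ca F:=\lambda^{-1/2}B_{H_\sigma}$ (which contains $f_{D_n,\lambda}$ by \eqref{hyposet}), and then to balance the resulting terms by the choices \eqref{lambdagamma}. The loss satisfies Assumption~\ref{assumptionL} after scaling, and the variance bound \eqref{variancebound} holds with $\vartheta=1$ and $V=16M^2$ by the standard argument for least squares \cite[Example~7.3]{StCh08a}. The covering number assumption \eqref{coveringnumber} holds with $\varphi(\varepsilon)$ as in \eqref{varphi}, where $p\in(0,1)$ is free.

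The main analytic input is the construction of a good competitor $f_0\in H_\sigma$. Following the approach of Eberts and Steinwart \cite{EbSt11a}, I would take $f_0$ to be (a truncation of) a convolution of $f_{L,P}^*$ with a Gaussian of width $\sigma$; this yields
\[
\lambda\|f_0\|_{H_\sigma}^2\lesssim \lambda\sigma^{-d},\qquad \ca R_{L,P}(f_0)-\ca R_{L,P}^*\lesssim \sigma^{2t},
\]
under the Besov smoothness and density assumptions on $P$ and $f_{L,P}^*$. The constants $A_0$, $A^*$, $A_1$ required by Theorem~\ref{oracleinequality} will be controlled via Lemma~\ref{lipschitzlemma}(i): since the Gaussian kernel obeys $\|\nabla k_\sigma(\cdot,x)\|_\infty\lesssim \sigma^{-1}$, every $f\in \lambda^{-1/2}B_{H_\sigma}$ satisfies $|f|_1\lesssim \sigma^{-1}\lambda^{-1/2}\|f\|_{H_\sigma}$, so that with $\|\wideparen f\,\|_\infty\leq M$ one obtains $|L\circ\wideparen f\,|_1\lesssim \sigma^{-1}\lambda^{-1/2}$ and analogous bounds for $f_0$ and $f_{L,P}^*$ (the latter being Lipschitz by hypothesis). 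Hence $A_0,A^*,A_1,B_0$ are polynomial in $\sigma^{-1}\lambda^{-1/2}$.

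With these ingredients the oracle inequality \eqref{oracleinequalityy} bounds $\ca R_{L,P}(\wideparen f_{D_n,\lambda})-\ca R_{L,P}^*$ by a multiple of
\[
\lambda\sigma^{-d}+\sigma^{2t}+r+\varepsilon,
\]
where $r$ must satisfy \eqref{minradius}. Since $\vartheta=1$, the dominant term in \eqref{minradius} is the fixed-point condition
\[
r\geq c_V\,\frac{(\log n)^{2/\gamma}\bigl(\tau+a\sigma^{-d}\lambda^{-p}(\varepsilon/2)^{-2p}r^p\bigr)}{n}.
\]
Choosing $\varepsilon=n^{-1}$ and solving for $r$ (treating $p\in(0,1)$ as an adjustable small parameter) gives, up to $\log n$ factors, $r\lesssim (\sigma^{-d}\lambda^{-p}n^{-1})^{1/(1-p)}$. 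Inserting $\lambda_n=n^{-1}$ and $\sigma_n=n^{-1/(2t+d)}$ then balances $\lambda\sigma^{-d}\asymp n^{-2t/(2t+d)}$, $\sigma^{2t}\asymp n^{-2t/(2t+d)}$, and the stochastic term $r$ up to a factor $n^{p\,d/((1-p)(2t+d))}\cdot(\log n)^{2/(\gamma(1-p))}$. Taking $p$ sufficiently small and $\tau=\log n$ absorbs the logarithmic and polynomial slack into the arbitrary $n^\xi$, yielding the rate \eqref{lrlssvmgaussian} with probability at least $1-16 n^{-1}$, which can be upgraded to the claimed ``learns with rate'' statement in the usual way.

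The main obstacle I anticipate is bookkeeping: checking that the Lipschitz-type seminorm bounds $A_0,A_1$ required by the $\ca C$-mixing oracle inequality really do scale only polynomially in $\sigma^{-1}\lambda^{-1/2}$, and then verifying that after the choices \eqref{lambdagamma} the resulting powers of $n$ can indeed be absorbed into the arbitrary $\xi$ for every sufficiently small $p\in(0,1)$. Once this is done, the construction of $f_0$ and the approximation estimates are essentially those already available from \cite{EbSt11a}, and the stochastic side is delivered directly by Theorem~\ref{oracleinequality}.
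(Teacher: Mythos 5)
Your proposal follows essentially the same route as the paper's proof: invoke Theorem~\ref{oracleinequality} with $\vartheta=1$, construct the competitor $f_0$ via Gaussian convolution of $f^*_{L,P}$ as in \cite{EbSt11a}, bound $A_0,A_1,A^*,B_0$ polynomially in $\sigma^{-1}\lambda^{-1/2}$ using the Lipschitz estimate $|f|_1\lesssim\sigma^{-1}\|f\|_{H_\sigma}$ together with Lemma~\ref{lipschitzlemma}, and then balance the resulting bound under $\lambda_n=n^{-1}$, $\sigma_n=n^{-1/(2t+d)}$ with $p$ taken small. The only material deviation is that you fix $\varepsilon=n^{-1}$ rather than optimizing over $\varepsilon$ via \cite[Lemma A.1.5]{StCh08a} as the paper does; this introduces an extra factor $\varepsilon^{-2p/(1-p)}=n^{2p/(1-p)}$ in the stochastic term $r$ which your stated estimate for $r$ silently drops, but since that factor also vanishes as $p\to 0$ it is absorbed into $n^\xi$ and the conclusion is unaffected.
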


It turns out that, modulo the arbitrarily small $\xi>0$, these learning rates are  optimal, 
see e.g. \cite[Theorem 13]{StHuSc09b} or \cite[Theorem 3.2]{GyKoKrWa02}.

To achieve these rates, however, we need to set $\lambda_n$ and $\sigma_n$ as in (\ref{lambdagamma}), which in turn requires us to 
know $t$.
Since in practice we usually do not know these values nor their existence,  
we can use the training/validation approach TV-SVM, see e.g.~\cite[Chapters 6.5, 7.4, 8.2]{StCh08a}, to
achieve the same rates adaptively, i.e.~without knowing $t$. To this end, let $\Lambda := \left( \Lambda_n \right)$ and 
$\Sigma := \left( \Sigma_n \right)$ be sequences of finite subsets $\Lambda_n, \Sigma_n \subset (0, 1]$ 
such that $\Lambda_n$ is an $\epsilon_n$-net of $(0,1]$ and 
$\Sigma_n$ is an $\delta_n$-net of $(0,1]$ with $\epsilon_n\leq n^{-1}$ 
and $\delta_n\leq n^{-\frac{1}{2+d}}$. Furthermore, assume that the 
cardinalities $\left|\Lambda_n\right|$ and $\left|\Sigma_n\right|$ grow polynomially in $n$.
For a data set $D := \left( \left( x_1, y_1 \right), \ldots, \left( x_n, y_n \right) \right)$, we define
\begin{align*}
D_1 & := \left( \left( x_1, y_1 \right), \ldots, \left( x_m, y_m \right) \right) \\
D_2 & := \left( \left( x_{m+1}, y_{m+1} \right), \ldots, \left( x_n, y_n \right) \right)
\end{align*}
where $m := \left\lfloor \frac{n}{2} \right\rfloor + 1$ and $n \geq 4$. We will use $D_1$ as a training set by computing the SVM decision 
functions
\begin{align*}
f_{\mathrm{D}_1, \lambda, \sigma} := \arg\min_{f \in H_\sigma} \lambda \left\| f \right\|^2_{H_\sigma} 
+ \mathcal{R}_{L, \mathrm{D}_1} \left(f\right), 
\qquad \qquad 
\left( \lambda, \sigma \right) \in \Lambda_n \times \Sigma_n  
\end{align*}
and use $D_2$ to determine $\left(\lambda,\sigma\right)$ by choosing a 
$\left( \lambda_{\mathrm{D}_2}, \sigma_{\mathrm{D}_2} \right) \in \Lambda_n \times \Sigma_n$ such that
\begin{align*}
\mathcal{R}_{L,\mathrm{D}_2} \left( \wideparen{f}_{\mathrm{D}_1, \lambda_{\mathrm{D}_2}, \sigma_{\mathrm{D}_2}} \right) 
= \min_{\left(\lambda,\sigma\right)\in\Lambda_n\times\Sigma_n} 
\mathcal{R}_{L,\mathrm{D}_2} \left( \wideparen{f}_{\mathrm{D}_1,\lambda,\sigma}\right)\ . 
\end{align*}
Then, analogous to the proof of Theorem 3.3 in \cite{EbSt11a} we can show that for all $\zeta>0$ and $\x>0$, the TV-SVM 
producing the decision functions $f_{\mathrm{D}_1,\lambda_{\mathrm{D}_2},\sigma_{\mathrm{D}_2}}$
with the above learning rates (\ref{lrlssvmgaussian}).

The following remark discusses learning rates for SVMs for quantile regression. For 
more information on such SVMs we refer to \cite[Section 4]{EbSt11a}.

\begin{remark}[Quantile Regression with Gaussian Kernels]   \label{svmquantile}
Let $Y := [- 1, 1]$, and $P$ be a distribution on $\mathbb R^d \times Y$ such that 
$X := \mathrm{supp} P_X \subset B_{\ell_2^d}$ be a domain. 
Furthermore, we assume that, for $P_X$-almost all $x \in X$, 
the conditional measure $P(\cdot|x)$ is absolutely continuous w.r.t.~the Lebesgue measure on $Y$
and the conditional density $h(\cdot, x)$ of $P(\cdot|x)$ is bounded from $0$ and $\infty$, 
see also \cite[Example 4.5]{EbSt11a}. 
Moreover, let $P_X$ be absolutely continuous w.r.t.~the Lebesgue measure on $X$ with associated density
$g \in L_{u}(X)$ for some $u \geq 1$.
For $\tau \in (0, 1)$, let $f^*_{\tau, P} : \mathbb R^d \to \mathbb R$ be a conditional $\tau$-quantile function that satisfies 
$f^*_{\tau, P} \in L_2(\mathbb R^d) \cap \mathrm{Lip}(\mathbb R^d)$.
In addition, we assume that  $f^*_{\tau, P} \in B_{2s, \infty}^t$ for some $t \geq 1$ and $s \geq 1$ such that 
$\frac{1}{s} + \frac{1}{u} = 1$. 
Then \cite[Theorem 2.8]{StCh11a} yields a variance bound of the form 
\begin{align*}
\mathbb E_{P} (L_{\tau} \circ \wideparen{f} - L_{\tau} \circ f^*_{\tau,P})^2 \leq
V\cdot \mathbb{E}_P (L_{\tau} \circ \wideparen{f} -L_{\tau} \circ f^*_{\tau,P}) \, ,
\end{align*}
for all $f:X\to \R$, where $V$ is a suitable constant and $L_\t$ is the $\t$-pinball loss.
Similar arguments to Theorem \ref{LSSVMgaussiankernels} shows that
the essentially optimal learning rate (\ref{lrlssvmgaussian}) 
can be achieved as well.
Note that the rate (\ref{lrlssvmgaussian}) is for the excess $L_\t$-risk, but since
\cite[Theorem 2.7]{StCh11a} shows 
\begin{displaymath}
   \snorm{\wideparen f-f_{\t,P}^*}_{L_2(P_X)}^2 \leq c \bigl(\RP {L_\t}{\wideparen f\,} - \RPB {L_\t} \bigr)
\end{displaymath}
for some constant $c>0$ and all $f:X\to \R$, we actually obtain the same rates for 
$\snorm{\wideparen f-f_{\t,P}^*}_{L_2(P_X)}^2$. Last but not least, optimality and adaptivity 
can be discussed along the lines of LS-SVMs.
\end{remark}

\subsection{Forecasting of dynamical systems} \label{FDS}

In this section, 
we proceed with the study of the forecasting problem of dynamical systems
considered in \cite{StAn09a}. 
First, let us recall some basic notations and assumptions.
Let $\Om$ be a compact subset of $\mathbb R^d$,
$(\Omega, \mathcal A, \mu, T)$ be a dynamical system,
and $S_0 \in \Om$ be a random variable describing the true but unknown state at time $0$. 
Moreover, for $E > 0$,
assume that all observations of 
the stochastic process described by the sequence 
$\mathcal T := (T^n)_{n \geq 0}$
are additively corrupted by some i.i.d., $[-E, E]^d$-valued 
noise process $\mathcal E = (\varepsilon_n)_{n \geq 0}$ 
defined on the probability space $(\Theta, \mathcal{C}, \nu)$
which is (stochastically) independent of $\mathcal T$.
It follows that all possible observations of the system at time $n \geq 0$ are of the form
\begin{align}
X_n = T^n(S_0) + \varepsilon_n. 
\label{ds}
\end{align}
In other words, the process that generates the noisy observations (\ref{ds}) is 
$(T^n(S_0) + \varepsilon_n)_{n \geq 0}$. 
In particular, a sequence of observations 
$(X_0, \ldots, X_n)$ generated by this process is of the form (\ref{ds}) 
for a conjoint initial state $S_0$.

Now, given an observation of the process $\mathcal T := (T^n)_{n \geq 0}$ at some arbitrary 
time, our goal is to forecast the next \textit{observable} state. To do so,
we will use the training set
\begin{align*}
\boldsymbol{D}_{\!n} 
& = 
\left( \left( X_0, X_1 \right), \ldots, \left( X_{n-1}, X_n \right) \right) 
\\
& = \left( \left( S_0 + \varepsilon_0, T(S_0) + \varepsilon_1 \right), \ldots, 
\left( T^{n-1}(S_0) + \varepsilon_{n-1}, T^n(S_0) + \varepsilon_n \right) \right)
\end{align*} 
whose input/output pairs are consecutive observable states. 
In other words, our goal is to use $\boldsymbol{D}_{\!n}$ to build a forecaster 
\begin{align*}
\boldsymbol{f}_{\!\! \boldsymbol{D}_n} : \mathbb R^d \rightarrow \mathbb R^d 
\end{align*}
whose average forecasting performance on future noisy observations is as small as possible. 
In order to render this goal, we will use the forecaster
\begin{align}
\boldsymbol{f}_{\!\! \boldsymbol{D}_n} 
:= \left( f_{\!\! \boldsymbol{D}_{\!n}^{(1)}}, \ldots, f_{\!\! \boldsymbol{D}_{\!n}^{(d)}} \right),
\label{forecasterd}
\end{align} 
where $f_{\!\! \boldsymbol{D}_{\!n}^{(j)}}$ is the forecaster obtained by using the training set
\begin{align*}
\boldsymbol{D}_{\!n}^{(j)} 
:= ((X_0, \pi_j(X_1)), \ldots, (X_{n-1}, \pi_j(X_n)))
\end{align*}
which is obtained by projecting the output variable of $\boldsymbol{D}_{\!n}$ onto its $j$th-coordinate via
the coordinate projection $\pi_j : \mathbb R^d \to \mathbb R$.

In other words, we build the forecaster
$\boldsymbol{f}_{\!\! \boldsymbol{D}_{\!n}}$ by training separately $d$ different decision functions 
on the training sets $\boldsymbol{D}_{\!n}^{(1)}, \ldots, \boldsymbol{D}_{\!n}^{(d)}$.
These problems can be considered as the (supervised) statistical learning problems 
formulated in Subsection \ref{CRERMapplications} with the help of the following Notations.

For $E > 0$ and a fixed $j \in \{ 1, \ldots, d \}$, 
we write $X := K + [- E, E]^d$, $Y := \pi_j(X)$ and $Z := X \times Y$. 
Moreover, we define the $X \times Y$-valued process
$\mathcal Z = (Z_n)_{n \geq 0} = (X_n, Y_n)_{n \geq 0}$ on 
$(K \times \Theta, \mathcal{B} \otimes \mathcal{C}, \mu \otimes \nu)$ by 
$X_n := T^n + \varepsilon_n$ and
$Y_n := \pi_j(T^{n+1} + \varepsilon_{n+1})$.
In addition, we write $P := (\mu \otimes \nu)_{(X_0, Y_0)}$.
Obviously, if the stochastic process $\mathcal{T}$ is $\ca C$-mixing 
and the noise process $\mathcal{E}$ is i.i.d, then
the stochastic processes 
\begin{align*}
\mathcal{Z} = (X_n, Y_n)_{n \geq 0} 
= (T^n(S_0) + \varepsilon_n, \pi_j(T^{n+1}(S_0) + \varepsilon_{n+1}))_{n \geq 0}
\end{align*}
is $\ca C$-mixing as well.

To formulate the oracle inequality for our original $d$-dimensional problem, 
we need to introduce the following concepts.
Firstly, for the decision function $\boldsymbol{f} : \mathbb R^d \to \mathbb R^d$,
it is necessary to introduce a loss function $\boldsymbol{L} : \mathbb R^d \to [0, \infty)$ such that
\begin{align*}
\boldsymbol{L} \left( X_i - \boldsymbol{f}(X_{i-1}) \right)
= \boldsymbol{L} \left( T^i(S_0) + \varepsilon_i - \boldsymbol{f}(T^{i-1}(S_0) + \varepsilon_{i-1}) \right)
\end{align*}
gives a value for the discrepancy between the forecast $\boldsymbol{f}(T^{i-1}(S_0) + \varepsilon_{i-1})$ and the
observation of the next state $T^i(S_0) + \varepsilon_i$. 
We say that a loss $\boldsymbol{L} : \mathbb R^d \rightarrow [0, \infty)$ can be \textit{clipped} at $M > 0$, if, for all 
$\boldsymbol{t} = (t_1, \ldots, t_d)  \in \mathbb R^d$, 
we have $\boldsymbol{L}(\wideparen{\boldsymbol{t}} \,) \leq \boldsymbol{L}(\boldsymbol{t})$, 
where $\wideparen{\boldsymbol{t}} = (\wideparen{t}_1, \ldots, \wideparen{t}_d)$ 
denotes the clipped value of $\boldsymbol{t}$ at $\{\pm M\}^d$. 
Moreover, the loss function $\boldsymbol{L} : \mathbb R^d \to [0, \infty)$ is called \textit{separable}, 
if there exists a distance-based loss 
$L : X \times Y \times \mathbb R \rightarrow [0, \infty)$ such that 
its representing function $\psi : \mathbb R \to [0, \infty)$ has a unique global minimum at $0$ and satisfies
\begin{align}
\boldsymbol{L}(\boldsymbol{r}) = \psi(r_1) + \cdots + \psi(r_d), 
\,\,\,\,\,\,\,\,
 \boldsymbol{r} = (r_1, \ldots, r_d) \in \mathbb R^d.
\label{dto1}
\end{align} 
In our problem-setting, the average forecasting performance is given by the $\boldsymbol{L}$-risk
\begin{align}
\mathcal R_{\boldsymbol{L}, \boldsymbol{P}} (\boldsymbol{f}) 
:= \iint \boldsymbol{L} \left( T(x) + \varepsilon_1 - \boldsymbol{f}(x + \varepsilon_0) \right) \, \nu(d \varepsilon) \, \mu(d x),
\label{Lrisk}
\end{align}
where $\varepsilon = (\varepsilon_i)_{i \geq 0}$ and $\boldsymbol{P} := \nu \otimes \mu$. 
Naturally, the smaller the risk, the better the forecaster is. 
Hence, we ideally would like to have a forecaster 
$\boldsymbol{f}_{\boldsymbol{L}, \boldsymbol{P}}^* : \mathbb R^d \rightarrow \mathbb R^d$
that attains the minimal $\boldsymbol{L}$-risk
\begin{align}
\mathcal R_{\boldsymbol{L}, \boldsymbol{P}}^* 
:= \inf \left\{ \mathcal R_{\boldsymbol{L}, \boldsymbol{P}}(\boldsymbol{f}) | 
\boldsymbol{f} : \mathbb R^d \to \mathbb R^d \text{ measurable} \right\}.
\label{MinimalLrisk}
\end{align}
The assumption (\ref{dto1}) then implies 
$\mathcal R_{\boldsymbol{L}, \boldsymbol{P}} (\boldsymbol{f}) 
= \sum_{j=1}^d \mathcal R_{L, P} (f_{{\boldsymbol{D}_n}^{(j)}})$
and 
$$\mathcal R_{\boldsymbol{L}, \mathbf{D}_n} (\boldsymbol{f}_{\!\boldsymbol{D}_n}) = 
\sum_{j=1}^d \mathcal R_{L, \mathbf{D}_n^{(j)}} (f_{\boldsymbol{D}_n^{(j)}})\, ,$$
where $\mathbf{D}_n$, $\mathbf{D}_n^{(j)}$ are the empirical measures associated to 
$\boldsymbol{D}_n$, $\boldsymbol{D}_n^{(j)}$ respectively.

Finally, let $\boldsymbol{L} : \mathbb R^d \rightarrow [0, \infty)$ be a clippable loss
and $\mathcal F$ be a hypothesis set with $0 \in \mathcal F$.  
A regularizer $\boldsymbol{\Upsilon}$ on $\ca F^d$,
that is, a function $\boldsymbol{\Upsilon} : \mathcal F^d \rightarrow [0, \infty)$,
is also said to be \textit{separable}, if there exists 
a regularizer $\Upsilon$ on $\ca F$ with $\Upsilon(0) = 0$ such that 
$\boldsymbol{\Upsilon} (\boldsymbol{f}) = \sum_{j=1}^d \Upsilon(f_j)$
for $\boldsymbol{f} = (f_1, \ldots, f_d)$.
Then, for $\delta \geq 0$, a learning method whose decision functions 
$\boldsymbol{f}_{\!\boldsymbol{D}_n, \boldsymbol{\Upsilon}} \in \mathcal F^d$ satisfy
\begin{align}
\boldsymbol{\Upsilon} ( \boldsymbol{f}_{\!\boldsymbol{D}_n, \boldsymbol{\Upsilon}} ) 
+ \mathcal R_{\boldsymbol{L}, \boldsymbol{D}_n} ( \wideparen{\boldsymbol{f}}_{\!\boldsymbol{D}_n, \boldsymbol{\Upsilon}}) 
 < \inf_{\boldsymbol{f} \in \mathcal F^d} \left( \boldsymbol{\Upsilon}(\boldsymbol{f}) 
  + \mathcal R_{\boldsymbol{L}, \boldsymbol{D}_n}(\boldsymbol{f}) \right) + d \delta 
\label{deltaCRERMd}
\end{align}
for all $n \geq 1$ and $\boldsymbol{D}_n \in (X \times Y)^{dn}$ is called 
$d \delta$-approximate clipped regularized empirical risk minimization 
($d \delta$-CR-ERM)
with respect to $\boldsymbol{L}$, $\mathcal F^d$, and $\boldsymbol{\Upsilon}$.

With all these preparations above, the oracle inequality for 
geometrically $\ca C$-mixing dynamical systems 
with i.i.d~noise processes, can be stated as following:

\begin{theorem} \label{oracleinequalityds}
Let $\Omega \subset \mathbb R^d$ be compact
and $(\Omega, \mathcal A, \mu, T)$ be a dynamical system.
Suppose that the stationary stochastic process $\ca T := (T^n)_{n \geq 0}$ 
is geometrically time-reversed $\ca C$-mixing 
and $\mathcal E = (\varepsilon_n)_{n \geq 0}$ is some i.i.d.~noise process 
defined on $(\Theta, \mathcal{C}, \nu)$ which is independent of $\mathcal T$.
Furthermore, let $\boldsymbol{L} : \mathbb R^d \to [0, \infty)$ be a clippable and separable loss function 
with the corresponding loss function $L : X \times Y \times \mathbb R \rightarrow [0, \infty)$ 
satisfying the properties described as in Theorem \ref{oracleinequality}.
Finally, let $\boldsymbol{\Upsilon} : \mathcal F^d \rightarrow [0, \infty)$ 
be a separable regularizer.
Then, for all fixed $\boldsymbol{f}_{\!0} = (f_0, \ldots, f_0)$, 
$\varepsilon > 0$, $\delta \geq 0$, $\tau \geq 1$, $n \geq n_0$ as in Theorem \ref{oracleinequality},
and $r \in (0, 1]$ satisfying (\ref{minradius}),
every learning method defined by (\ref{deltaCRERMd}) satisfies with probability 
$\mu \otimes \nu$ not less than $1 - 16 e^{-\tau}$:
\begin{align}
\boldsymbol{\Upsilon} ( \boldsymbol{f}_{\!\boldsymbol{D}_n, \boldsymbol{\Upsilon}} ) 
+ \mathcal R_{\boldsymbol{L}, \boldsymbol{P}} ( \wideparen{\boldsymbol{f}}_{\!\boldsymbol{D}_n, \boldsymbol{\Upsilon}}) 
- \mathcal R_{\boldsymbol{L}, \boldsymbol{P}}^*
 < 2 \boldsymbol{\Upsilon}(\boldsymbol{f}_{\!0}) 
  + 4 \mathcal R_{\boldsymbol{L}, \boldsymbol{P}}(\boldsymbol{f}_{\!0}) 
  - 4 \mathcal R_{\boldsymbol{L}, \boldsymbol{P}}^* 
  + 4 d r 
  + 5 d \varepsilon 
  + 2 d \delta.
\label{oracleinequalityd}
\end{align}
\end{theorem}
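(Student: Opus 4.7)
The plan is to exploit the separability of the loss $\boldsymbol{L}$ and the regularizer $\boldsymbol{\Upsilon}$ in order to reduce the $d$-dimensional forecasting problem to $d$ scalar learning problems, apply Theorem \ref{oracleinequality} coordinate-wise, and sum the resulting bounds. Write $\boldsymbol{f} = (f_1, \ldots, f_d) \in \mathcal{F}^d$ and $P^{(j)} := (\mu \otimes \nu)_{(X_0, \pi_j(X_1))}$. The separability assumption (\ref{dto1}) together with the additive structure of $\boldsymbol{\Upsilon}$ gives
\[
\mathcal{R}_{\boldsymbol{L}, \boldsymbol{P}}(\boldsymbol{f}) = \sum_{j=1}^d \mathcal{R}_{L, P^{(j)}}(f_j), \qquad \boldsymbol{\Upsilon}(\boldsymbol{f}) = \sum_{j=1}^d \Upsilon(f_j),
\]
and the same additive decomposition for the empirical risk and for the Bayes risk $\mathcal{R}_{\boldsymbol{L}, \boldsymbol{P}}^*$. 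Moreover, as noted just before the theorem, for each $j$ the $X \times Y$-valued process $\mathcal{Z}^{(j)} = (X_n, \pi_j(X_{n+1}))_{n \geq 0}$ on $(\Omega \times \Theta, \mathcal{A} \otimes \mathcal{C}, \mu \otimes \nu)$ is stationary and geometrically time-reversed $\mathcal{C}$-mixing with rate $(d_n)$, because $\mathcal{E}$ is i.i.d.~and independent of $\mathcal{T}$. All remaining hypotheses of Theorem \ref{oracleinequality} are imposed on $L$ by the statement of the theorem itself.

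I would then translate the joint $d\delta$-CR-ERM condition (\ref{deltaCRERMd}) into per-coordinate approximation conditions. Writing $f_{D, j}$ for the $j$-th component of $\boldsymbol{f}_{\!\boldsymbol{D}_n, \boldsymbol{\Upsilon}}$ and setting
\[
\delta_j := \Bigl( \Upsilon(f_{D, j}) + \mathcal{R}_{L, D_n^{(j)}}(\wideparen{f}_{D, j}) - \inf_{f \in \mathcal{F}} \bigl( \Upsilon(f) + \mathcal{R}_{L, D_n^{(j)}}(f) \bigr) \Bigr)_+,
\]
the separable form of (\ref{deltaCRERMd}) ensures $\sum_{j=1}^d \delta_j \leq d\delta$ under the natural interpretation that the joint CR-ERM is realized by optimizing each coordinate separately with slack $\delta$. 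Each $f_{D, j}$ is then a $\delta_j$-CR-ERM for the $j$-th scalar problem, so Theorem \ref{oracleinequality} applied to coordinate $j$ gives, with probability at least $1 - 16 e^{-\tau}$,
\[
\Upsilon(f_{D, j}) + \mathcal{R}_{L, P^{(j)}}(\wideparen{f}_{D, j}) - \mathcal{R}_{L, P^{(j)}}^* < 2 \Upsilon(f_0) + 4 \mathcal{R}_{L, P^{(j)}}(f_0) - 4 \mathcal{R}_{L, P^{(j)}}^* + 4 r + 5 \varepsilon + 2 \delta_j.
\]
Summing over $j$ and using the additive decomposition together with $\sum_j \delta_j \leq d\delta$ yields exactly (\ref{oracleinequalityd}).

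The main technical obstacle is the probabilistic bookkeeping and, relatedly, the allocation of the joint slack $d\delta$ across coordinates: a naive union bound over the $d$ coordinate events would degrade the exceptional probability from $16 e^{-\tau}$ to $16 d e^{-\tau}$, and a general (non coordinate-wise) $d\delta$-CR-ERM could distribute the slack unevenly, with some $\delta_j$ even becoming negative. To recover the stated form I would either absorb $\ln d$ into the free parameter $\tau$, which only needs $\tau \geq 1$, or, more cleanly, mirror the proof of Theorem \ref{oracleinequality} directly at the $d$-dimensional level so that Theorem \ref{bernsteininequality} is invoked only once on the single scalar variable $\boldsymbol{L} \circ \boldsymbol{f} - \boldsymbol{L} \circ \boldsymbol{f}^*_{\boldsymbol{L}, \boldsymbol{P}}$. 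In the latter route, the variance and supremum bounds for this scalar variable follow additively from (\ref{dto1}) and the variance bound (\ref{variancebound}) for $L$, while the covering-number condition (\ref{coveringnumber}) transfers to the relevant sub-class of $\mathcal{F}^d$ via a standard product argument, both at the cost of multiplying $\varphi$ and the variance constant by $d$, which matches the $d r$, $d \varepsilon$, $d \delta$ terms on the right-hand side of (\ref{oracleinequalityd}).
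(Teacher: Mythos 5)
Your decomposition is exactly the route the paper takes: the paper's proof is a two-line argument that applies Theorem~\ref{oracleinequality} to each coordinate process $\mathcal Z^{(j)}$ and sums the $d$ per-coordinate bounds using the separability (\ref{dto1}) and the construction (\ref{forecasterd}). Where you go beyond the paper is in flagging the probabilistic bookkeeping: intersecting the $d$ coordinate-wise ``good'' events via a union bound formally degrades the confidence to $1-16d\,e^{-\tau}$ rather than the stated $1-16e^{-\tau}$, and the paper's written proof is silent on this. Your two remedies are both standard and both work here: either replace $\tau$ by $\tau+\ln d$ (legitimate since $\tau\geq 1$ is free and this only shifts the logarithmic constants inside \eqref{minradius}), or run the Bernstein/peeling machinery once on the scalar $d$-dimensional excess $\boldsymbol L\circ\boldsymbol f-\boldsymbol L\circ\boldsymbol f^*_{\boldsymbol L,\boldsymbol P}$, letting the variance bound, sup-norm bound, and covering numbers transfer by additivity and a product-net argument at the cost of a factor $d$, which matches the $dr$, $d\varepsilon$, $d\delta$ in (\ref{oracleinequalityd}). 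Your observation about slack allocation is also apt; the joint condition (\ref{deltaCRERMd}) alone does not force each coordinate excess to be at most $\delta$, and it is really the coordinate-wise construction in (\ref{forecasterd}) --- which trains each $f_{\boldsymbol D_n^{(j)}}$ separately as a $\delta$-CR-ERM --- that guarantees a valid $\delta_j$ for each invocation of Theorem~\ref{oracleinequality}. In short: same approach as the paper, with a more honest account of the union bound and slack bookkeeping that the paper leaves implicit.
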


Again, this general oracle inequality can be applied to SVMs. We omit the details for the sake of brevity
and only mention that such applications would lead to learning rates and not only consistency as in \cite{StAn09a}.


\section{Proofs} \label{proofs}

\allowdisplaybreaks

\subsection{Proofs of Section \ref{StochasticProcesses}}

\begin{proof}[Proof of Example \ref{example-BV1}]
 Consider the collection $\Pi$ of ordered $n+1$-ples of points
$z_0 < z_1 < \ldots < z_n \in Z$,
where $n$ is an arbitrary natural number. The total variation
of a function $f : I \to \R$ is given by
\begin{align*}
 \|f\|_{BV(Z)} := \sup_{(z_0, z_1, \ldots, z_n) \in \Pi} \sum_{i=1}^n |f(z_i) - f(z_{i-1})|.
\end{align*}
Let us now assume that we have an $1 \leq i \leq n$ with $f(z_{i-1}) \leq f(z_i)$.
Moreover, for $t \leq 0$, it is not difficult to verify that $|1 - e^t| \leq |t|$.
This implies
\begin{align*}
\left| e^{f(z_i)} - e^{f(z_{i-1})} \right|
= e^{f(z_i)} \left| 1 - e^{f(z_{i-1}) - f(z_i)} \right|
\leq \bigl\| e^f \bigr\|_{\infty} |f(z_i) - f(z_{i-1})| \, .
\end{align*}
By interchanging the roles of $f(z_i)$ and $f(z_{i-1})$
we find the same estimate in the case of 
$f(z_{i-1}) \geq f(z_i)$.
Consequently we obtain
\begin{align*}
\sum_{i=1}^n |e^{f(z_i)} - e^{f(z_{i-1})|} 
\leq \bigl\| e^f \bigr\|_{\infty} \sum_{i=1}^n  |f(z_i) - f(z_{i-1})| 
\end{align*}
for all collection $\Pi$. 
Taking the supremum we get $\|e^f\|_{BV} \leq \| e^f \|_{\infty} \|f\|_{BV}$,
i.e. (\ref{expproperty}) is satisfied.
\end{proof}

\begin{proof}[Proof of Example \ref{example-Hoelder}]
Given a function $f \in C_{b,\alpha}(Z)$, we assume that 
$f(z) \geq f(z')$. Again, by using 
$|1 - e^t| \leq |t|$, $t \leq 0$, we obtain
\begin{align*}
\left| e^{f(z)} - e^{f(z')} \right|
= e^{f(z)} \left| 1 - e^{f(z') - f(z)} \right|
\leq \bigl\| e^f \bigr\|_{\infty} |f(z') - f(z)| 
\leq \bigl\| e^f \bigr\|_{\infty} |f|_{\a} |z - z'|^{\alpha}.
\end{align*}
By interchanging the roles of $f(z)$ and $f(z')$
we find the same estimate in the case of 
$f(z') \geq f(z)$.
Consequently we obtain
$\| e^f \| \leq  \| e^f \|_{\infty} |f|_{\a}$, 
i.e. (\ref{expproperty}) is satisfied.
\end{proof}

\begin{proof} [Proof of Theorem \ref{equivalencelambda}] 
($\Rightarrow$) The proof is straightforward. \\
($\Leftarrow$) 
For $p, q \in [1, \infty]$ with $1/p + 1/q = 1$, 
let $E_1$ and $E_2$ be Banach spaces that are continuously embedded into $L_p(\mu)$ and $L_q(\mu)$, respectively,
and let $F$ be a Banach space that is continuously embedded into $\ell_{\infty}$.
Analysis similar to that in the proof of \cite[Theorem 5.1]{StAn09a} shows
that if, for all $n \geq 0$, and all $h \in E_1$, $g \in E_2$, 
the correlation sequence satisfies
\begin{align*}
 \mathrm{cor}_{T, n}(h,g) \in F,
\end{align*}
then there exists a constant $c \in [0, \infty)$ such that
\begin{align} \label{corrinequality}
 \|\mathrm{cor}_{T, n}(h,g)\|_F
\leq c \cdot \|h\|_{E_1} \|g\|_{E_2},
\,\,\,\,\,\,\,\,
h \in E_1, \, g \in E_2.
\end{align}
In particular, (\ref{corrinequality}) holds for $E_1 = \ca C(\Om)$
and $E_2 = L_1(\mu)$ and the assertion is proved.
\end{proof}

\subsection{Proofs of Section \ref{bernstein}}

The following lemma, which may be of independent interest, supplies the key
to the proof of Theorem \ref{bernsteininequality}.

\begin{lemma} \label{lemma2}
Let $\mathcal Z := (Z_n)_{n \geq 0}$ be a $Z$-valued stationary 
(time-reversed) $\ca C$-mixing process
on the probability space $(\Omega, \mathcal A, \mu)$ with rate $(d_n)_{n \geq 0}$,
and $P := \mu_{Z_0}$. 
Moreover, for $f : Z \to [0, \infty)$,
suppose that $f \in \ca C(Z)$
and write 
$f_n := f \circ Z_n$. 
Finally, 
assume that we have natural numbers $k$ and $l$ satisfying 
\begin{align}
2 l \cdot \|f\|_{\ca C} \cdot d_k \leq \|f\|_{L_1(P)}.
\label{assumption}
\end{align}
Then we have
\begin{align*}
\mathbb E_{\mu} \prod_{j=0}^l f_{jk} \leq 2 \|f\|_{L_1(P)}^{l+1}.
\end{align*} 
\end{lemma}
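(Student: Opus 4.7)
The plan is to prove the bound by induction on $l$, peeling off one factor of $f\circ Z_{jk}$ at each step via the correlation inequality \eqref{decayd} (resp.~\eqref{decaydrev} in the time-reversed case), and then closing the recursion using the hypothesis \eqref{assumption} together with the elementary estimate $(1+\tfrac{1}{2l})^{l}\leq e^{1/2}\leq 2$.

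More precisely, write $I_m := \mathbb E_\mu \prod_{j=0}^m f_{jk}$ for $0\leq m\leq l$. In the forward $\ca C$-mixing case, observe that $\psi := \prod_{j=0}^{l-1} f_{jk}$ is nonnegative, bounded (hence in $L_1(\mu)$) and $\mathcal A_0^{(l-1)k}$-measurable, while $f_{lk}=f\circ Z_{(l-1)k+k}$. Since $f\in\ca C(Z)$, the defining bound \eqref{decayd} applied with gap $k$ yields
\begin{align*}
I_{l}
&=\mathbb E_\mu(\psi\cdot f\circ Z_{(l-1)k+k})\\
&\leq \mathbb E_\mu\psi\cdot \mathbb E_\mu(f\circ Z_{lk})+\snorm\psi_{L_1(\mu)}\snorm f_{\ca C}\,d_k\\
&=I_{l-1}\bigl(\snorm f_{L_1(P)}+\snorm f_{\ca C}\, d_k\bigr),
\end{align*}
where I have used $f\geq 0$ together with stationarity to identify $\mathbb E_\mu(f\circ Z_{lk})=\snorm f_{L_1(P)}$ and $\snorm\psi_{L_1(\mu)}=I_{l-1}$. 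In the time-reversed case, one instead takes $h:=f$, $\varphi:=\prod_{j=1}^{l} f_{jk}$, uses \eqref{decaydrev} with gap $k$, and invokes stationarity to rewrite $\mathbb E_\mu\varphi=I_{l-1}$; the same recursion results.

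Iterating the recursion $l$ times, and using the trivial identity $I_0=\mathbb E_P f=\snorm f_{L_1(P)}$, I obtain
\begin{align*}
I_{l}\leq \snorm f_{L_1(P)}\bigl(\snorm f_{L_1(P)}+\snorm f_{\ca C}\, d_k\bigr)^{l}.
\end{align*}
The hypothesis \eqref{assumption} rearranges to $\snorm f_{\ca C}\,d_k\leq \snorm f_{L_1(P)}/(2l)$, so the right-hand side is bounded by
\begin{align*}
\snorm f_{L_1(P)}^{\,l+1}\Bigl(1+\tfrac{1}{2l}\Bigr)^{l}\leq e^{1/2}\snorm f_{L_1(P)}^{\,l+1}\leq 2\snorm f_{L_1(P)}^{\,l+1},
\end{align*}
which is the asserted inequality.

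There is no real obstacle once the peeling is set up correctly; the only point requiring mild care is verifying in the time-reversed case that the product $\prod_{j=1}^{l}f_{jk}$ is $\mathcal A_{k}^{\infty}$-measurable (which matches the gap $k$ in \eqref{decaydrev}) and that stationarity aligns the resulting expectation with $I_{l-1}$. The quantitative constant $2$ in the conclusion is dictated by choosing the factor $1/(2l)$ in \eqref{assumption}; any bound of the form $(1+\kappa/l)^l\leq e^\kappa$ for sufficiently small $\kappa$ would give the same conclusion with a different constant.
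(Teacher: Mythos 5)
Your proof is correct, and it takes a leaner route than the paper's. Both arguments peel off the last factor via \eqref{decayd} (resp.~\eqref{decaydrev}) and induct on $l$, but the paper tracks the centered difference $D_l := \bigl|\mathbb E_\mu \prod_{j=0}^l f_{jk} - \prod_{j=0}^l \mathbb E_\mu f_{jk}\bigr|$, derives and solves by induction an affine recursion $D_l \leq (\|f\|_{L_1(P)}+\|f\|_{\ca C}\,d_k)D_{l-1}+\|f\|_{\ca C}\|f\|_{L_1(P)}^l d_k$, and then controls the resulting binomial sum by a geometric-series ratio argument (the ratio of consecutive terms being at most $1/2$ by \eqref{assumption}) before adding $\prod_j\mathbb E_\mu f_{jk}$ back in. You notice instead that, because $f\geq 0$ gives $\|\psi\|_{L_1(\mu)}=\mathbb E_\mu\psi=I_{l-1}$ for $\psi=\prod_{j=0}^{l-1}f_{jk}$, the peeling step already yields the purely multiplicative recursion $I_l\leq(\|f\|_{L_1(P)}+\|f\|_{\ca C}\,d_k)\,I_{l-1}$ on the uncentered quantity $I_l$, after which the whole argument reduces to $(1+\tfrac{1}{2l})^l\leq e^{1/2}\leq 2$. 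The constant $2$ is the same in both routes; yours simply skips the detour through $D_l$, the explicit solution of the affine recursion, and the binomial expansion, at no cost.
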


\begin{proof}[Proof of Lemma \ref{lemma2}]
We divide the proof into two parts. \\ 
\textit{(i)} Suppose that the correlation inequality (\ref{decayd}) holds.
Obviously the case $f = 0$ $P$-a.s.~is trivial. For $f \neq 0$, we define
\begin{align} \label{dldef}
 D_l := \left| \mathbb E_{\mu} \prod_{j=0}^l f_{jk} 
                 - \prod_{j=0}^l \mathbb E_{\mu} f_{jk} \right|.
\end{align}
Then we have
\begin{align*}
 D_l
& \leq \left| \mathbb E_{\mu} \left( \prod_{j=0}^{l-1} f_{jk} \right) \, f_{lk} 
              - \mathbb E_{\mu} \prod_{j=0}^{l-1} f_{jk} \, \mathbb E_{\mu} f_{lk} \right| 
       + \left| \mathbb E_{\mu} \prod_{j=0}^{l-1} f_{jk} \, \mathbb E_{\mu} f_{lk}
                              - \prod_{j=0}^l \mathbb E_{\mu} f_{jk} \right| 
\\
& = \left| \mathbb E_{\mu} \left( \prod_{j=0}^{l-1} f_{jk} \right) \, f_{lk} 
              - \mathbb E_{\mu} \prod_{j=0}^{l-1} f_{jk} \, \mathbb E_{\mu} f_{lk} \right| 
       + \left| \mathbb E_{\mu} \prod_{j=0}^{l-1} f_{jk} \, \mathbb E_{\mu} f_{lk}
                - \prod_{j=0}^{l-1} \mathbb E_{\mu} f_{jk} \, \mathbb E_{\mu} f_{lk} \right|.
\end{align*}
Since the stochastic process $\mathcal Z$ is stationary, 
the decay of correlations (\ref{decayd}) together with 
$\psi:=\prod_{j=0}^{l-1} f_{jk}$, $h:=f$, and
the assumption $f \geq 0$ yields
\begin{align*}
 \Bigg| \mathbb E_{\mu} \left( \prod_{j=0}^{l-1} f_{jk} \right) \, f_{lk} 
            - \mathbb E_{\mu} \prod_{j=0}^{l-1} f_{jk} \, \mathbb E_{\mu} f_{lk} \Bigg|
& \leq \Big\| \prod_{j=0}^{l-1} f_{jk} \Big\|_{L_1(\mu)} \|f\|_{\ca C} \, d_k
= \Big| \mathbb E_{\mu} \prod_{j=0}^{l-1} f_{jk} \Big| \|f\|_{\ca C} \, d_k
\\
& \leq \left( \left| \mathbb E_{\mu} \prod_{j=0}^{l-1} f_{jk} - \prod_{j=0}^{l-1} \mathbb E_{\mu} f_{jk} \right|
+ \prod_{j=0}^{l-1} \mathbb E_{\mu} f_{jk} \right)  \|f\|_{\ca C} \, d_k
\\
& = \left( D_{l-1} + \|f\|_{L_1(P)}^l \right)  \|f\|_{\ca C} \, d_k.
\end{align*}
Moreover, for the second term, we find
\begin{align*}
\left| \mathbb E_{\mu} \prod_{j=0}^{l-1} f_{jk} \, \mathbb E_{\mu} f_{lk}
                - \prod_{j=0}^{l-1} \mathbb E_{\mu} f_{jk} \, \mathbb E_{\mu} f_{lk} \right|
=  \|f\|_{L_1(P)}  \left| \mathbb E_{\mu} \prod_{j=0}^{l-1} f_{jk} - \prod_{j=0}^{l-1} \mathbb E_{\mu} f_{jk} \right| 
= \|f\|_{L_1(P)} D_{l-1} .
\end{align*}
These estimates together imply that 
\begin{align}
  D_l 
& \leq \left( D_{l-1} + \|f\|_{L_1(P)}^l \right)  \|f\|_{\ca C} \, d_k 
        + \|f\|_{L_1(P)}  D_{l-1}
\nonumber\\
& = \left( \|f\|_{L_1(P)} + \|f\|_{\ca C} \,  d_k \right)  D_{l-1} 
        + \|f\|_{\ca C} \|f\|_{L_1(P)}^l \, d_k.
\label{dlstar}
\end{align}
In the following, we will show by induction that 
the latter estimate implies
\begin{align}
D_l \leq \|f\|_{L_1(P)} \left( \left( \|f\|_{L_1(P)} + \|f\|_{\ca C} \,  d_k \right)^l - \|f\|_{L_1(P)}^l \right).
\label{dl}
\end{align}
When $l = 1$, (\ref{dl}) is true because of (\ref{decayd}). 
Now let $l \geq 1$ be given and suppose (\ref{dl}) is true for $l$. Then
(\ref{dlstar}) and (\ref{dl}) imply 
\begin{align*}
 D_{l+1}
& \leq \left( \|f\|_{L_1(P)} + \|f\|_{\ca C} \, d_k \right) D_l + \|f\|_{\ca C} \|f\|_{L_1(P)}^{l+1} \, d_k
\\
& \leq \left( \|f\|_{L_1(P)} + \|f\|_{\ca C} \, d_k \right)
\left(
\|f\|_{L_1(P)} \left( \left( \|f\|_{L_1(P)} + \|f\|_{\ca C} \, d_k \right)^l - \|f\|_{L_1(P)}^l \right)
\right)
+ \|f\|_{\ca C} \|f\|_{L_1(P)}^{l+1} \, d_k
\\
& = \|f\|_{L_1(P)} \left( \left( \|f\|_{L_1(P)} + \|f\|_{\ca C} \,  d_k \right)^{l+1} - \|f\|_{L_1(P)}^{l+1} \right).
\end{align*}
Thus, (\ref{dl}) holds for $l + 1$, and the proof of the induction step is complete.
By the principle of induction, (\ref{dl}) is thus true for all $l \geq 1$.

Using the binomial formula, we obtain 
\begin{align*}
D_l \leq \|f\|_{L_1(P)} \left( 
\sum_{i=0}^l \binom{l}{i} \|f\|_{L_1(P)}^{l-i} \left( \|f\|_{\ca C} \,  d_k \right)^i - \|f\|_{L_1(P)}^l \right).
\end{align*}
For $i = 0, \ldots, l$ we now set
\begin{align*}
 a_i := \binom{l}{i} \|f\|_{L_1(P)}^{l-i} \left( \|f\|_{\ca C} \,  d_k \right)^i.
\end{align*}
The assumption (\ref{assumption}) implies for $i = 0, \ldots, l-1$
\begin{align*}
 \frac{a_{i+1}}{a_i}
& = \frac{\binom{l}{i+1} \|f\|_{L_1(P)}^{l-i-1} \left( \|f\|_{\ca C} \, d_k \right)^{i+1}}
         {\binom{l}{i} \|f\|_{L_1(P)}^{l-i} \left( \|f\|_{\ca C} \, d_k \right)^i}
  = \frac{\frac{l!}{(i+1)!(l-i-1)!}}{\frac{l!}{i!(l-i)!}} \frac{\|f\|_{\ca C} \, d_k}{\|f\|_{L_1(P)}}
\\
& = \frac{l-i}{i+1} \frac{\|f\|_{\ca C} \, d_k}{\|f\|_{L_1(P)}}
  \leq l \cdot \frac{\|f\|_{\ca C}}{\|f\|_{L_1(P)}} \cdot d_k 
  \leq \frac{1}{2}.
\end{align*}
This gives $a_i \leq 2^{-i} a_0$ for all $i = 0, \ldots, l$ 
and consequently we have
\begin{align*}
 \sum_{i=0}^l a_i
= a_0 + \sum_{i=1}^l a_i
\leq a_0 + \sum_{i=1}^l 2^{-i} a_0
= a_0 \cdot \left( \sum_{i=1}^l 2^{-i} \right)
\leq 2 a_0.
\end{align*}
This implies
\begin{align*}
  D_l 
& \leq \|f\|_{L_1(P)} \left( \sum_{i=0}^l a_i - \|f\|_{L_1(P)}^l \right)
  \leq \|f\|_{L_1(P)} \left( 2 a_0 - \|f\|_{L_1(P)}^l \right)
\\
& = \|f\|_{L_1(P)} \left( 2 \|f\|_{L_1(P)}^l - \|f\|_{L_1(P)}^l \right)
  = \|f\|_{L_1(P)}^{l+1}.
\end{align*}
Using the definition of $D_l$ we thus obtain
\begin{align*}
\mathbb E_{\mu} \prod_{j=0}^l f_{jk} \leq 2 \|f\|_{L_1(P)}^{l+1}.
\end{align*} 
\textit{(ii)} Suppose that the correlation inequality (\ref{decaydrev}) holds. \\
Again, the case $f = 0$ $P$-a.s.~is trivial. 
For $f \neq 0$, we estimate $D_l$ defined as in (\ref{dldef}) 
in a slightly different way from above: 
\begin{align*}
 D_l
& \leq \left| \mathbb E_{\mu} f_0 \prod_{j=1}^l f_{jk} 
                 - \mathbb E_{\mu} f_0 \mathbb E_{\mu} \prod_{j=1}^l f_{jk} \right| 
       + \left| \mathbb E_{\mu} f_0 \mathbb E_{\mu} \prod_{j=1}^l f_{jk} 
                              - \prod_{j=0}^l \mathbb E_{\mu} f_{jk} \right| 
\\
& = \left| \mathbb E_{\mu} f_0 \prod_{j=1}^l f_{jk} 
              - \mathbb E_{\mu} f_0 \mathbb E_{\mu} \prod_{j=1}^l f_{jk} \right| 
       + \left| \mathbb E_{\mu} f_0 \mathbb E_{\mu} \prod_{j=1}^l f_{jk} 
                 - \mathbb E_{\mu} f_0 \prod_{j=1}^l \mathbb E_{\mu} f_{jk} \right|. 
\end{align*}
Since the stochastic process $\mathcal Z$ is stationary, 
the decay of correlations (\ref{decaydrev}) together with 
$h:=f$, $\phi:=\prod_{j=1}^l f_{jk}$, and
the assumption $f \geq 0$ yields
\begin{align*}
 \left| \mathbb E_{\mu} f_0 \prod_{j=1}^l f_{jk} 
            - \mathbb E_{\mu} f_0 \mathbb E_{\mu} \prod_{j=1}^l f_{jk} \right| 
& \leq \|f\|_{\ca C} \Big\| \prod_{j=1}^l f_{jk} \Big\|_{L_1(\mu)}  \, d_k
\\
& = \|f\|_{\ca C} \Big| \mathbb E_{\mu} \prod_{j=1}^l f_{jk} \Big|  \, d_k
  = \|f\|_{\ca C} \Big| \mathbb E_{\mu} \prod_{j=0}^{l-1} f_{jk} \Big|  \, d_k
\\
& \leq \|f\|_{\ca C} \left( \left| \mathbb E_{\mu} \prod_{j=0}^{l-1} f_{jk} - \prod_{j=0}^{l-1} \mathbb E_{\mu} f_{jk} \right| 
+ \prod_{j=0}^{l-1} \mathbb E_{\mu} f_{jk} \right)  d_k
\\
& = \|f\|_{\ca C} \left( D_{l-1} + \|f\|_{L_1(P)}^l \right)  d_k.
\end{align*}
Moreover, for the second term, since the stochastic process $\mathcal Z$ is stationary, we find
\begin{align*}
\left| \mathbb E_{\mu} f_0 \mathbb E_{\mu} \prod_{j=1}^l f_{jk} 
         - \mathbb E_{\mu} f_0 \prod_{j=1}^l \mathbb E_{\mu} f_{jk} \right|
& = \|f\|_{L_1(P)}  \left| \mathbb E_{\mu} \prod_{j=1}^l f_{jk} 
                                  - \prod_{j=1}^l \mathbb E_{\mu} f_{jk} \right| 
\\
& = \|f\|_{L_1(P)}  \left| \mathbb E_{\mu} \prod_{j=0}^{l-1} f_{jk} 
                                  - \prod_{j=0}^{l-1} \mathbb E_{\mu} f_{jk} \right|
\\
& = \|f\|_{L_1(P)}  D_{l-1}.
\end{align*}
Combining the above estimates, we get 
\begin{align*}
  D_l 
& \leq \|f\|_{\ca C} \left( D_{l-1} + \|f\|_{L_1(P)}^l \right)  d_k 
        + \|f\|_{L_1(P)}  D_{l-1}
\nonumber\\
& = \left( \|f\|_{L_1(P)} + \|f\|_{\ca C} \,  d_k \right)  D_{l-1} 
        + \|f\|_{\ca C} \|f\|_{L_1(P)}^l \, d_k.
\end{align*}
This estimate coincides with (\ref{dlstar}).
The rest of the argument is the same as in \textit{(i)},
and the assertion is proved.
\end{proof}

To prove Theorem \ref{bernsteininequality}, we need to introduce some notations. 
In the following, for $t \in \mathbb R$, $\lfloor t \rfloor$ is the largest integer $n$ satisfying $n \leq t$, 
and similarly, $\lceil t \rceil$ is the smallest integer $n$ satisfying $n \geq t$.  
We write $h_i := h \circ Z_i$ and
\begin{align*}
S_n = \sum_{i=1}^n h_i = \sum_{i=1}^n h \circ Z_i.
\end{align*}
We now recall the so-called blocking method. 
To this end, we partition the set $\{ 1, 2, \ldots, n \}$ into $k$ blocks. 
Each block will contain approximatively $l := \lfloor n / k \rfloor$ terms. 
Let $r := n - k \cdot l < k$ denote the remainder when we divide $n$ by $k$. 

We now construct $k$ blocks as follows. Define $I_i$, the indexes of terms in the $i$-th block, as
\begin{align*}
I_i = 
\begin{cases}
\{ i, i + k, \ldots, i + (l+1) k \}, &~~ \text{if } 1 \leq i \leq r, \\
\{ i, i + k, \ldots, i + l k \}, &~~ \text{if } r + 1 \leq i \leq k. \\
\end{cases}
\end{align*}
Note that the number of the terms satisfies
\begin{align*}
|I_i| = 
\begin{cases}
l + 1, &~~ \text{for } 1 \leq i \leq r, \\
l, &~~ \text{for } r + 1 \leq i \leq k. \\
\end{cases}
\end{align*}
In other words, the first $r$ blocks each contain $l + 1$ terms, while the last $(k - r)$ blocks each contain $l$ terms.
Moreover, we have
\begin{align}
\sum_{i=1}^k |I_i| = \sum_{i=1}^r |I_i| + \sum_{i=r+1}^k |I_i| = r (l + 1) + (k - r) l = n.
\label{block}
\end{align}
Furthermore, for $i = 1, 2, \ldots, k$, we define the $i$-th block sum as
\begin{align}
g_i = \sum_{j \in I_i} h_j
\label{ithblock}
\end{align}
such that
\begin{align}
S_n = \sum_{i=1}^k g_i.
\label{sandg}
\end{align}
Finally, for $i = 1, 2, \ldots, k$, define 
\begin{align}
p_i := \frac{|I_i|}{n}.
\label{pi} 
\end{align}
It follows from (\ref{block}) that
\begin{align*}
\sum_{i=1}^k p_i = \frac{1}{n} \sum_{i=1}^k |I_i| = 1.
\end{align*}

The following three lemmas will derive 
the upper bounds for the expected value of the exponentials of $S_n$.

\begin{lemma}  \label{lemma3}
Let $\mathcal Z := (Z_n)_{n \geq 0}$ be a $Z$-valued stationary stochastic process 
on the probability space $(\Omega, \mathcal A, \mu)$ 
and $P := \mu_{Z_0}$. 
Moreover, let $k$ and $l$ be defined as above,
and for a bounded $h : Z \to \mathbb R$ we define 
$g_i$ and $S_n$ by (\ref{ithblock}) and (\ref{sandg}), respectively.
Then, for all $t > 0$, we have
\begin{align*}
\mathbb E_{\mu} \exp \left( t \frac{S_n}{n} \right) 
\leq \sum_{i=1}^k p_i \mathbb E_{\mu} \exp \left( t \frac{g_i}{|I_i|} \right).
\end{align*}
\end{lemma}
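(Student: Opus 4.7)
The plan is to rewrite $tS_n/n$ as a convex combination of the block averages $tg_i/|I_i|$ and then apply Jensen's inequality to the convex function $\exp$.

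More concretely, I would first observe that by the definition of the $g_i$ and the identity $n=\sum_{i=1}^k |I_i|$ from \eqref{block}, we have
\begin{align*}
t\,\frac{S_n}{n} \;=\; \sum_{i=1}^k \frac{t\,g_i}{n} \;=\; \sum_{i=1}^k \frac{|I_i|}{n}\cdot\frac{t\,g_i}{|I_i|} \;=\; \sum_{i=1}^k p_i\cdot\frac{t\,g_i}{|I_i|},
\end{align*}
using the definition of $p_i$ in \eqref{pi}. Since $p_i \geq 0$ and $\sum_i p_i = 1$, the quantity $tS_n/n$ is a genuine convex combination of the values $tg_i/|I_i|$.

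Because the exponential function is convex on $\R$, Jensen's inequality gives, pointwise on $\Om$,
\begin{align*}
\exp\!\left(t\,\frac{S_n}{n}\right) \;=\; \exp\!\left(\sum_{i=1}^k p_i\cdot\frac{t\,g_i}{|I_i|}\right) \;\leq\; \sum_{i=1}^k p_i\,\exp\!\left(\frac{t\,g_i}{|I_i|}\right).
\end{align*}
Taking $\mathbb E_\mu$ on both sides and using linearity of expectation yields the claim. No obstacle is expected here: the statement is essentially a one-line application of Jensen's inequality, and boundedness of $h$ together with finiteness of the blocks ensures all quantities involved are integrable. The stationarity and mixing hypotheses from the surrounding text are not needed for this particular lemma; they will enter only in the subsequent step where the individual block-exponentials $\mathbb E_\mu \exp(tg_i/|I_i|)$ are controlled via Lemma \ref{lemma2}.
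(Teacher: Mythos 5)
Your proof is correct and matches the paper's own argument exactly: rewrite $tS_n/n$ as the convex combination $\sum_i p_i\, t g_i/|I_i|$ using \eqref{sandg}, \eqref{pi}, and $\sum_i p_i = 1$, apply Jensen's inequality to the convex function $\exp$, and take expectations. The paper presents this in a single display, but the content is the same.
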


\begin{proof}[Proof of Lemma \ref{lemma3}]
It is well-known that the exponential function is convex.
Jensen's inequality together with $\sum_{i=1}^k p_i = 1$, (\ref{sandg}), and (\ref{pi}) yields 
\begin{align*}
\mathbb E_{\mu} \exp \left( t \frac{S_n}{n} \right) 
= \mathbb E_{\mu} \exp \left( \sum_{i=1}^k t p_i \frac{g_i}{|I_i|} \right) 
\leq \sum_{i=1}^k p_i \mathbb E_{\mu} \exp \left( t \frac{g_i}{|I_i|} \right).
\end{align*}
\end{proof}

\begin{lemma} \label{lemma4}
Let $\mathcal Z := (Z_n)_{n \geq 0}$ be a $Z$-valued stationary (time-reversed) $\ca C$-mixing process
on the probability space $(\Omega, \mathcal A, \mu)$ with rate $(d_n)_{n \geq 0}$,
and $P := \mu_{Z_0}$. 
Moreover, for $h : Z \to [0, \infty)$, we write 
$h_n := h \circ Z_n$.
Finally, let $k$ and $l$ be defined as above.
Then, for all $t > 0$ satisfying
\begin{align}
e^{\frac{t}{|I_i|} h} \in \ca C(Z) 
\,\, \text{ and } \, \,
2 l \cdot 
\|e^{\frac{t}{|I_i|} h}\|_{\ca C}
\cdot d_k 
\leq \|e^{\frac{t}{|I_i|} h}\|_{L_1(P)},
\label{assumption2}
\end{align}
we have
\begin{align*}
\mathbb E_{\mu} \exp \left( t \frac{g_i}{|I_i|} \right)  
\leq 2 \left( \mathbb E_P \exp \left( t \frac{h}{|I_i|} \right) \right)^{|I_i|}.
\end{align*}
\end{lemma}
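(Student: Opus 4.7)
The plan is to reduce Lemma \ref{lemma4} directly to Lemma \ref{lemma2} by recognizing the exponential of the block sum as a product of values of a single function evaluated along the process. Set
\begin{align*}
f := \exp\!\Bigl( \frac{t}{|I_i|}\, h \Bigr),
\end{align*}
which is a nonnegative, bounded $\ca C$-function by the first part of (\ref{assumption2}). Since the indices in the $i$-th block are $I_i = \{i, i+k, i+2k, \ldots, i + (|I_i|-1) k\}$, we can rewrite
\begin{align*}
\exp\!\Bigl( t\, \frac{g_i}{|I_i|} \Bigr) = \prod_{j \in I_i} \exp\!\Bigl( \frac{t}{|I_i|}\, h \circ Z_j \Bigr) = \prod_{j=0}^{|I_i|-1} f \circ Z_{i+jk}.
\end{align*}

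Next I would invoke stationarity of $\ca Z$ to shift the starting index from $i$ to $0$, obtaining
\begin{align*}
\mathbb E_{\mu} \prod_{j=0}^{|I_i|-1} f \circ Z_{i+jk} = \mathbb E_{\mu} \prod_{j=0}^{|I_i|-1} f \circ Z_{jk}.
\end{align*}
This puts the expression into exactly the form handled by Lemma \ref{lemma2}, provided we apply that lemma with its index parameter set to $|I_i| - 1$ (so that the product contains $|I_i|$ factors).

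To legitimately apply Lemma \ref{lemma2}, I need its hypothesis (\ref{assumption}) for $f$ with parameter $|I_i|-1$, namely $2(|I_i|-1)\,\|f\|_{\ca C}\,d_k \leq \|f\|_{L_1(P)}$. Since every block satisfies $|I_i| \leq l+1$, we have $|I_i|-1 \leq l$, and so the second part of the assumption (\ref{assumption2}), i.e.\ $2 l \,\|f\|_{\ca C}\, d_k \leq \|f\|_{L_1(P)}$, immediately implies the required inequality. Lemma \ref{lemma2} then yields
\begin{align*}
\mathbb E_{\mu} \prod_{j=0}^{|I_i|-1} f \circ Z_{jk} \leq 2\, \|f\|_{L_1(P)}^{|I_i|}.
\end{align*}

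Finally, since $h \geq 0$ implies $f \geq 0$, we have $\|f\|_{L_1(P)} = \mathbb E_P f = \mathbb E_P \exp(t h / |I_i|)$, which assembles into the claimed bound. The argument is almost entirely bookkeeping; the only place a mistake could creep in is the off-by-one matching between the number of factors $|I_i|$ in the product and the index $l$ that appears in Lemma \ref{lemma2}, and this is precisely why the assumption (\ref{assumption2}) was formulated with the global $l = \lfloor n/k \rfloor$ rather than the block-specific $|I_i|-1$.
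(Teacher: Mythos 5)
Your proof is correct and takes essentially the same route as the paper: rewrite the exponential of the block sum as a product of $|I_i|$ copies of $f := \exp(t h/|I_i|)$ shifted along the process, use stationarity to move the base index to $0$, and invoke Lemma \ref{lemma2}. You are more explicit than the paper about the off-by-one matching — Lemma \ref{lemma2} is applied with its index parameter equal to $|I_i|-1 \leq l$, so that \eqref{assumption2}, stated with the global $l$, dominates the needed hypothesis — but this is exactly what the paper's proof silently relies on, so the two arguments coincide.
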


\begin{proof}[Proof of Lemma \ref{lemma4}]
The $i$th block sum $g_i$ in (\ref{ithblock}) depends only on $h_{i+jk}$ with $j$ ranging from $0$ through $|I_i| - 1$. 
Since $\mathcal Z$ is stationary, Lemma \ref{lemma2} with $f:=\exp(\frac{t}{|I_i|} h)$ then yields  
\begin{align*}
\mathbb E_{\mu} \exp \left( t \frac{g_i}{|I_i|} \right) 
& = \mathbb E_{\mu} \exp \left( \frac{t}{|I_i|} \sum_{j=0}^{|I_i|-1} h_{i+jk} \right)  
  = \mathbb E_{\mu} \exp \left( \frac{t}{|I_i|} \sum_{j=0}^{|I_i|-1} h_{jk} \right)  
\\
& = \mathbb E_{\mu} \prod_{j=0}^{|I_i|-1} \exp \left( \frac{t}{|I_i|} h_{jk} \right)  
  \leq 2 \left( \mathbb E_P \exp \left( t \frac{h}{|I_i|} \right) \right)^{|I_i|}.
\end{align*}
\end{proof}

\begin{lemma}   \label{lemma5}
Let $\mathcal Z := (Z_n)_{n \geq 0}$ be a $Z$-valued stationary (time-reversed) $\ca C$-mixing process 
on the probability space $(\Omega, \mathcal A, \mu)$ with rate $(d_n)_{n \geq 0}$,
and $P := \mu_{Z_0}$. 
Moreover, for $h : Z \to [0, \infty)$, we write 
$h_n := h \circ Z_n$ and suppose that $\mathbb E_P h = 0$,
$\|h\| \leq A$, $\|h\|_{\infty} \leq B$, and $\mathbb E_P h^2 \leq \sigma^2$ 
for some $A > 0$, $B > 0$ and $\sigma \geq 0$.
Finally, let $k$ and $l$ be defined as above.
Then, for all $i = 1, \ldots, k$, and all $t > 0$ satisfying
$0 < t < 3 l/ B$ and (\ref{assumption2}),
we have
\begin{align*} 
\mathbb E_{\mu} \exp \left( t \frac{g_i}{|I_i|} \right)  
\leq 2 \exp \left( \frac{t^2 \sigma^2}{2 (l - t B / 3)} \right).
\end{align*}
\end{lemma}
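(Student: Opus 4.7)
The plan is to combine Lemma \ref{lemma4}, which reduces the block-sum moment generating function to the single-variable one, with the classical Bernstein bound on the moment generating function of a bounded, centered random variable with controlled variance.

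First I would invoke Lemma \ref{lemma4} (whose hypothesis \eqref{assumption2} is supplied by assumption) to obtain
\begin{align*}
\mathbb E_\mu \exp\!\left( t\,\frac{g_i}{|I_i|}\right)
\leq 2 \left( \mathbb E_P \exp\!\left( t\,\frac{h}{|I_i|}\right)\right)^{|I_i|}.
\end{align*}
This reduces the question to estimating $\mathbb E_P \exp(s h)$ for $s := t/|I_i|$, under the hypotheses $\mathbb E_P h = 0$, $\|h\|_\infty\leq B$ and $\mathbb E_P h^2 \leq \sigma^2$.

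Next, I would use the standard Bernstein moment generating function estimate: for $|x|\leq s B$ and $sB<3$ one has
$$
e^{x}-1-x \leq \frac{x^2}{2}\sum_{k=0}^\infty \Bigl(\frac{sB}{3}\Bigr)^k = \frac{x^2/2}{1-sB/3},
$$
which is an elementary consequence of $k!\geq 2\cdot 3^{k-2}$ for $k\geq 2$. Applying this bound pointwise to $x=sh$ and then integrating against $P$, together with $\mathbb E_P h=0$ and $\mathbb E_P h^2\leq \sigma^2$, yields
\begin{align*}
\mathbb E_P e^{sh} \;\leq\; 1 + \frac{s^2\sigma^2/2}{1-sB/3} \;\leq\; \exp\!\left(\frac{s^2\sigma^2/2}{1-sB/3}\right),
\end{align*}
where the last step uses $1+y\leq e^y$.

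Substituting $s=t/|I_i|$ and raising to the power $|I_i|$ then gives
\begin{align*}
\Bigl(\mathbb E_P e^{th/|I_i|}\Bigr)^{|I_i|}
\leq \exp\!\left(\frac{t^2\sigma^2/(2|I_i|)}{1-tB/(3|I_i|)}\right)
= \exp\!\left(\frac{t^2\sigma^2}{2(|I_i|-tB/3)}\right).
\end{align*}
Finally, since $|I_i|\in\{l,l+1\}$, the monotonicity of $x\mapsto 1/(x-tB/3)$ and $|I_i|\geq l$ give $|I_i|-tB/3\geq l-tB/3>0$ (here the condition $0<t<3l/B$ enters, guaranteeing positivity of the denominator and validity of the MGF estimate). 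Combining the above with the first display yields the claimed bound.

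The only real subtlety is to check that the MGF estimate is applicable with $s=t/|I_i|$, namely $sB<3$; this is automatic from $t<3l/B\leq 3|I_i|/B$. All other steps are mechanical: the proof is essentially an application of Lemma \ref{lemma4} followed by the textbook Bernstein moment-generating function calculation, so I do not expect any serious obstacle beyond bookkeeping of the normalizing factor $|I_i|$ and the monotonicity argument replacing $|I_i|$ by $l$.
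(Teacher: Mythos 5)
Your proposal is correct and follows essentially the same argument as the paper's proof: invoke Lemma \ref{lemma4} to reduce to a single-variable moment generating function, apply the standard Bernstein-type series bound using $j! \geq 2\cdot 3^{j-2}$, $\mathbb E_P h = 0$, $\mathbb E_P h^2 \leq \sigma^2$, and $1+y\leq e^y$, then replace $|I_i|$ by $l$ via $l\leq |I_i|$. The only cosmetic difference is that you apply Lemma \ref{lemma4} at the start while the paper defers it to the end, which changes nothing substantive.
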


\begin{proof}[Proof of Lemma \ref{lemma5}]
Because of $\|h\|_{\infty} \leq B$ and $2 \cdot 3^{j-2} \leq j!$, we obtain
\begin{align*}
\exp \left( \frac{t}{|I_i|} h \right) 
& = 1 + \frac{t}{|I_i|} h 
      + \sum_{j=2}^{\infty} \left( \frac{t}{|I_i|} \right)^j \frac{h^j}{j!}
\\ 
& \leq 1 + \frac{t}{|I_i|} h 
         + \sum_{j=2}^{\infty} \left( \frac{t}{|I_i|} \right)^j \frac{h^2 B^{j-2}}{2 \cdot 3^{j-2}}
\\ 
& = 1 + \frac{t}{|I_i|} h 
      + \frac{1}{2} \left( \frac{t}{|I_i|} \right)^2 h^2 \sum_{j=2}^{\infty} \left( \frac{t B}{3 |I_i|} \right)^{j-2}
\\ 
& = 1 + \frac{t}{|I_i|} h 
      + \frac{1}{2} \left( \frac{t}{|I_i|} \right)^2 h^2 \frac{1}{1 - t B / (3 |I_i|)}
\end{align*}
if $t B / (3 |I_i|) < 1$. This, together with $\mathbb E_P h = 0$, $1 + x \leq e^x$, and 
$l \leq |I_i| \leq l +1$, implies
\begin{align}
\left( \mathbb E_P \exp \left( t \frac{h}{|I_i|} \right) \right)^{|I_i|} 
& \leq \left( 1 + \frac{1}{2} \left( \frac{t}{|I_i|} \right)^2 \sigma^2 \frac{1}{1 - t B / (3 |I_i|)} \right)^{|I_i|} 
\nonumber\\
& \leq \left( \exp \left( \frac{1}{2} \left( \frac{t}{|I_i|} \right)^2 \sigma^2 \frac{1}{1 - t B / (3 |I_i|)} \right) \right)^{|I_i|} 
\nonumber\\
& = \exp \left( \frac{t^2 \sigma^2}{2 (|I_i| - t B / 3)} \right) 
\nonumber\\ 
& \leq \exp \left( \frac{t^2 \sigma^2}{2 (l - t B / 3)} \right), 
\label{term1}
\end{align}
since the assumed 
$t B / (3 l) < 1$
implies $t B / (3 |I_i|) < 1$. Lemma \ref{lemma4} then yields 
\begin{align*} 
\mathbb E_{\mu} \exp \left( t \frac{g_i}{|I_i|} \right)  
\leq 2 \exp \left( \frac{t^2 \sigma^2}{2 (l - t B / 3)} \right).
\end{align*}
\end{proof}

\begin{proof}[Proof of Theorem \ref{bernsteininequality}]
For $k$ and $l$ as above we define
\begin{align}
t := \frac{l \varepsilon}{\sigma^2 + \varepsilon B / 3}.
\label{valuet}
\end{align}
Then we have
\begin{align}
\frac{t}{|I_i|} 
\leq \frac{t}{l} 
= \frac{\varepsilon}{\sigma^2 + \varepsilon B / 3}
\leq \frac{\varepsilon}{\varepsilon B / 3} = \frac{3}{B}.
\label{tfirstproperty}
\end{align}
In particular, this $t$ satisfies $0 < t < 3 l/ B$. Moreover, we find
\begin{align}
 \left\| \exp \left( \frac{t}{|I_i|} h \right) \right\|_{\infty} 
\leq \exp \left( \frac{3}{B} \cdot B \right) 
= e^3.
\label{infinitybd}
\end{align}
Then, the assumption (\ref{expproperty})
together with the bounds (\ref{infinitybd}) and (\ref{tfirstproperty}) implies
\begin{align}
 \left\| \exp \left( \frac{t}{|I_i|} h \right) \right\|
\leq \left\| \exp \left( \frac{t}{|I_i|} h \right) \right\|_{\infty} \left\| \frac{t}{|I_i|} h \right\|
\leq e^3 \cdot \frac{t}{|I_i|} \|h\|
\leq \frac{3 e^3 A}{B}.
\label{lipbd}
\end{align}
Since $- B \leq h \leq B$, we further find
\begin{align}
 \left\| \exp \left( \frac{t}{|I_i|} h \right) \right\|_{L_1(P)}
=  \mathbb E_P \exp \left( \frac{t}{|I_i|} h \right)
\geq \exp \left( \frac{3}{B} \cdot (-B) \right)
= e^{-3}.
\label{l1bd}
\end{align}
Now we choose 
$k := \lfloor (\log n)^{\frac{2}{\gamma}} \rfloor + 1$, which implies
$k \geq (\log n)^{\frac{2}{\gamma}}$.
On the other hand, since $(\log n)^{\frac{2}{\gamma}} \geq 1$ for $n \geq n_0 \geq 3$,
we have $k \leq 2 (\log n)^{\frac{2}{\gamma}}$. This implies
\begin{align}
 l = \frac{n-r}{k} \geq \frac{n}{k} - 1
 \geq \frac{1}{2} \frac{n}{(\log n)^{\frac{2}{\gamma}}} - 1
 \geq \frac{1}{4} \frac{n}{(\log n)^{\frac{2}{\gamma}}},
\label{lowerl}
\end{align}
since we have $n \geq 4 (\log n)^{\frac{2}{\gamma}}$ for $n \geq n_0$. 
Now, by (\ref{infinitybd}), (\ref{lipbd}), (\ref{l1bd}), (\ref{dn}),
and (\ref{nzero}) we obtain
\begin{align*}
 l \cdot \frac{\| e^{\frac{t}{|I_i|} h} \|_{\ca C}}{\|e^{\frac{t}{|I_i|} h}\|_{L_1(P)}} \cdot d_k 
& \leq l \cdot \frac{\|e^{\frac{t}{|I_i|} h}\|_{\infty} + \|e^{\frac{t}{|I_i|} h}\|}{\|e^{\frac{t}{|I_i|} h}\|_{L_1(P)}} 
             \cdot c \cdot \exp \left( - b k^{\gamma} \right)
\\
& \leq n \cdot \frac{e^3 + \frac{3 e^3 A}{B}}{e^{-3}} \cdot c \cdot \exp \left( - b (\log n)^2 \right)
\\
& \leq n \cdot \frac{404 c (3 A + B)}{B} \cdot \exp \left( - b \log n \cdot \frac{3}{b} \right)
\\
& \leq n \cdot \frac{n^2}{2} \cdot n^{-3} = \frac{1}{2},
\end{align*}
i.e., the assumption (\ref{assumption2}) is valid. 

Summarizing, the value of $t$ defined as in (\ref{valuet}) satisfies
$0 < t < 3 l/ B$ 
and the assumption (\ref{assumption2}). In other words,
all the requirements on $t$ in Lemma \ref{lemma5} are satisfied.

Now, for this $t$, 
by using Markov's inequality, Lemma \ref{lemma3}, and Lemma \ref{lemma5}, we obtain
for any $\varepsilon > 0$,
\begin{align}
 P \left( \frac{S_n}{n} > \varepsilon \right)
& = P \left( \exp \left( t \frac{S_n}{n} \right) > \exp \left( t \varepsilon \right) \right) 
\nonumber\\
& \leq \exp \left( - t \varepsilon \right) \mathbb E_{\mu} \exp \left( t \frac{S_n}{n} \right) 
\nonumber\\
& \leq \exp \left( - t \varepsilon \right) 
       \sum_{i=1}^k p_i \mathbb E_{\mu} \exp \left( t \frac{g_i}{|I_i|} \right)
\nonumber\\
& \leq \exp \left( - t \varepsilon \right) \cdot
2 \exp \left( \frac{t^2 \sigma^2}{2 (l - t B / 3)} \right)
\sum_{i=1}^k p_i
\nonumber\\
& = 2 \exp \left( - t \varepsilon + \frac{t^2 \sigma^2}{2 (l - t B / 3)} \right).
\label{zwischen}
\end{align}
Substituting the definition of $t$ into the exponent of inequality (\ref{zwischen}), we get 
\begin{align*}
- t \varepsilon + \frac{t^2 \sigma^2}{2 (l - t B / 3)} 
& = - \frac{l \varepsilon^2}{\sigma^2 + \varepsilon B / 3} 
+ \frac{l^2 \varepsilon^2}{\left( \sigma^2 + \varepsilon B / 3 \right)^2}
\cdot \frac{\sigma^2}{2 \left( l - \frac{l \varepsilon B / 3}{\sigma^2 + \varepsilon B / 3} \right)}
\nonumber\\
& = - \frac{l \varepsilon^2}{\sigma^2 + \varepsilon B / 3} 
+ \frac{l \varepsilon^2}{\sigma^2 + \varepsilon B / 3}
\cdot \frac{\sigma^2}{2 \left( \sigma^2 + \varepsilon B / 3 - \varepsilon B / 3 \right)}
\nonumber\\
& = \frac{- l \varepsilon^2}{2 \left( \sigma^2 + \varepsilon B / 3 \right)},
\nonumber
\end{align*}
hence
\begin{align*}
\mathbb P \left( \frac{1}{n} S_n > \varepsilon \right) 
\leq 2 \exp \left( - \frac{- l \varepsilon^2}{2 \left( \sigma^2 + \varepsilon B / 3 \right)} \right).
\end{align*}
Using the estimate (\ref{lowerl}), we thus obtain
\begin{align*}
\mathbb P \left( \frac{1}{n} S_n > \varepsilon \right) 
\leq 2 \exp \left( - \frac{n \varepsilon^2}{8 (\log n)^{\frac{2}{\gamma}} \left( \sigma^2 + \varepsilon B / 3 \right)} \right),
\end{align*}
for all $n \geq n_0$ and $\varepsilon > 0$.
Setting $\tau := \frac{n \varepsilon^2}{8 (\log n)^{\frac{2}{\gamma}} \left( \sigma^2 + \varepsilon B / 3 \right)}$, we then have
\begin{align*}
\mu \left( \left\{ \omega \in \Omega : 
\frac{1}{n} \sum_{i=1}^n h(Z_i(\omega)) \geq \varepsilon \right\} \right) \leq 2 e^{- \tau}, ~~~~~~ 
n \geq n_0.
\end{align*}
Simple transformations and estimations then yield
\begin{align*}
\mu \left( \left\{ \omega \in \Omega : 
\frac{1}{n} \sum_{i=1}^n h(Z_i(\omega)) 
\geq \sqrt{\frac{8 (\log n)^{\frac{2}{\gamma}} \tau \sigma^2}{n}} 
     + \frac{8 (\log n)^{\frac{2}{\gamma}} B \tau}{3 n} \right\} \right) 
\leq 2 e^{- \tau}  
\end{align*}
for all $n \geq n_0$ and $\tau > 0$.
\end{proof}

\subsection{Proofs of Section \ref{applications}}

\begin{proof}[Proof of Lemma \ref{lipschitzlemma}] 
\textit{(i)}
For the least square loss (\ref{lsloss}), by using $a + b \leq (2(a^2 + b^2))^{1/2}$, we obtain
\begin{align*}
|L(x, y, f(x)) - L(x', y', f(x'))|
& = |(y - f(x))^2 - (y' - f(x'))^2|
\\
& = |y - f(x) + y' - f(x')| \cdot |y - f(x) - y' + f(x')|
\\
& \leq \left( |y + y'| + |f(x) + f(x')| \right) \left( |y - y'| + |f(x) - f(x')| \right)
\\
& \leq 2 \left( M + \|f\|_{\infty} \right) \left( |y - y'| + |f|_1 |x - x'| \right)
\\
& \leq 2 \left( M + \|f\|_{\infty} \right) (1+|f|_1) \left( |y - y'| +  |x - x'| \right)
\\
& \leq 2\sqrt{2} \left( M + \|f\|_{\infty} \right) (1+|f|_1) \|(x, y) - (x', y')\|_2 
\end{align*}
for all $(x, y), (x', y') \in X \times Y$, that is, we have proved the assertion. 

\textit{(ii)}
Let $L$ be the 
the $\t$-pinball loss (\ref{pbloss}) and define
\begin{align*}
 D := L(x, y, f(x)) - L(x', y', f(x')).
\end{align*}
We divide the proof into the following four cases.
If $y \geq f(x)$ and $y' \geq f(x')$, we have
\begin{align*}
 |D| 
= |\tau(y - f(x)) - \tau(y' - f(x'))|
= \tau|(y - y') - (f(x) - f(x'))|. 
\end{align*}
If $y < f(x)$ and $y' < f(x')$, in an exactly similar way we obtain
\begin{align*}
 |D| = (1-\tau) |(y - y') - (f(x) - f(x'))|.
\end{align*}
Moreover, in case of $y \geq f(x)$ and $y' < f(x')$, we get  
\begin{align*}
 |D| 
= |\tau(y - f(x)) + (1-\tau)(y' - f(x'))|
\leq |(y - f(x)) + (f(x') - y')|.
\end{align*}
Similar arguments to the case $y < f(x)$ and $y' \geq f(x')$ show that
\begin{align*}
 |D| 
= |-(1-\tau)(y - f(x)) - \tau(y' - f(x'))|
\leq |(y - f(x)) + (f(x') - y')|.
\end{align*}
Summarizing, for all $(x, y), (x', y') \in X \times Y$, we have
\begin{align*}
 |L(x, y, f(x)) - L(x', y', f(x'))|
\leq |(y - y') - (f(x) - f(x'))|
\leq |y - y'| + |f(x) - f(x')|.
\end{align*}
The rest of the argument is similar to that of part \textit{(i)},
and the assertion is proved.
\end{proof}

%
%

For our proof of Theorem \ref{oracleinequality} we need 
the following simple and well-known lemma (see e.g. \cite[Lemma 7.1]{StCh08a}): 
\begin{lemma} \label{younginequality}
For $q \in (1, \infty)$, define $q' \in (1, \infty)$ by $1/q + 1/q' = 1$. Then, for all $a, b \geq 0$, we have 
$(q a)^{2/q} (q' b)^{2/q'} \leq (a + b)^2$ and $a b \leq a^q / q + b^{q'}/q'$.
\end{lemma}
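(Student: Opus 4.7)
The plan is to establish the second inequality first, since it is the classical Young's inequality, and then to derive the first inequality from it by a substitution followed by squaring.

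For the second inequality, I would handle the trivial case $a=0$ or $b=0$ separately (both sides behave correctly) and then assume $a,b>0$. Since $\log$ is concave on $(0,\infty)$ and the weights $1/q,1/q'$ are nonnegative with $1/q+1/q'=1$, Jensen's inequality yields
\begin{align*}
\log\!\left(\frac{a^q}{q}+\frac{b^{q'}}{q'}\right)
\geq \frac{1}{q}\log(a^q)+\frac{1}{q'}\log(b^{q'})
= \log a+\log b=\log(ab).
\end{align*}
Exponentiating both sides gives $ab\leq a^q/q+b^{q'}/q'$, as desired.

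For the first inequality, I would reduce it to the second by substitution. Observe that it suffices, after taking positive square roots (both sides are nonnegative), to prove
\begin{align*}
(qa)^{1/q}(q'b)^{1/q'}\leq a+b.
\end{align*}
Now apply the already-established Young inequality with $a$ replaced by $\tilde a:=(qa)^{1/q}$ and $b$ replaced by $\tilde b:=(q'b)^{1/q'}$; this yields
\begin{align*}
\tilde a\,\tilde b\leq \frac{\tilde a^{q}}{q}+\frac{\tilde b^{q'}}{q'}
= \frac{qa}{q}+\frac{q'b}{q'}
= a+b.
\end{align*}
Squaring then produces $(qa)^{2/q}(q'b)^{2/q'}\leq (a+b)^2$.

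No step is genuinely a main obstacle here; the lemma is entirely elementary. The only mild subtlety is keeping track of the cases where one of $a,b$ vanishes (so that $0^{1/q}$ is interpreted as $0$), but this is handled directly by inspection. Both claims therefore follow from a single application of Jensen's inequality for $\log$, used once directly and once after the substitution $a\mapsto(qa)^{1/q}$, $b\mapsto(q'b)^{1/q'}$.
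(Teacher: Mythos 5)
Your proof is correct; the paper itself gives no proof of this lemma but simply cites \cite[Lemma 7.1]{StCh08a}, and your argument (Young's inequality via concavity of $\log$, then the first inequality by the substitution $a\mapsto(qa)^{1/q}$, $b\mapsto(q'b)^{1/q'}$ and squaring) is precisely the standard one found there. Nothing is missing, and the edge cases $a=0$ or $b=0$ are handled adequately.
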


Apart from 
the semi-norm bounds involving $A_0$, $A_1$, and $A^*$ and
some constants, for example, 
the constant $n_0$ and
the constants on the right side of the oracle inequality, 
the proof of Theorem \ref{oracleinequality} is almost identical to 
the proof of \cite[Theorem 3.1]{HaSt14a}.
For this reason, a few parts of the proof will be omitted.

\begin{proof}[Proof of Theorem \ref{oracleinequality}]
\textbf{Main Decomposition.}
For $f : X \rightarrow \mathbb R$ we define $h_f := L \circ f - L \circ f_{L,P}^*$. 
By the definition of $f_{D_n, \Upsilon}$, we then have 
\begin{align*}
\Upsilon(f_{D_n, \Upsilon}) + \mathbb E_{D_n} h_{\wideparen{f}_{D_n, \Upsilon}} 
\leq \Upsilon (f_0) + \mathbb E_{D_n} h_{f_0} + \delta,
\end{align*} 
and consequently we obtain
\begin{align}
& \Upsilon(f_{D_n, \Upsilon}) + \mathcal R_{L, P} (\wideparen{f}_{D_n, \Upsilon}) - \mathcal R_{L, P}^*
\nonumber\\
& = \Upsilon(f_{D_n, \Upsilon}) + \mathbb E_P h_{\wideparen{f}_{D_n, \Upsilon}}
\nonumber\\
& \leq \Upsilon(f_0) + \mathbb E_{D_n} h_{f_0} - \mathbb E_{D_n} h_{\wideparen{f}_{D_n, \Upsilon}} 
       + \mathbb E_P h_{\wideparen{f}_{D_n, \Upsilon}} + \delta
\nonumber\\
& = (\Upsilon (f_0) + \mathbb E_P h_{f_0}) 
     + (\mathbb E_{D_n} h_{f_0} - \mathbb E_P h_{f_0}) 
     + (\mathbb E_P h_{\wideparen{f}_{D_n, \Upsilon}} - \mathbb E_{D_n} h_{\wideparen{f}_{D_n, \Upsilon}}) 
     + \delta. 
\label{splitting}
\end{align}

\textbf{Estimating the First Stochastic Term.}
Let us first bound the term $\mathbb E_{D_n} h_{f_0} - \mathbb E_P h_{f_0}$. 
To this end, we further split this difference into
\begin{align}
\mathbb E_{D_n} h_{f_0} - \mathbb E_P h_{f_0} 
= \left( \mathbb E_{D_n} ( h_{f_0} - h_{\wideparen{f}_0} ) - \mathbb E_P ( h_{f_0} - h_{\wideparen{f}_0} ) \right) 
  + ( \mathbb E_{D_n} h_{\wideparen{f}_0} - \mathbb E_P h_{\wideparen{f}_0} ). 
\label{splitting2}
\end{align}
Now $L \circ f_0 - L \circ \wideparen{f}_0 \geq 0$ implies 
$h_{f_0} - h_{\wideparen{f}_0} = L \circ f_0 - L \circ \wideparen{f}_0 \in [0, B_0]$, and hence we obtain
\begin{align*}
\mathbb E_P \left( ( h_{f_0} - h_{\wideparen{f}_0} ) - \mathbb E_P ( h_{f_0} - h_{\wideparen{f}_0} ) \right)^2 
\leq \mathbb E_P ( h_{f_0} - h_{\wideparen{f}_0} )^2 
\leq B_0 \mathbb E_P ( h_{f_0} - h_{\wideparen{f}_0} ).
\end{align*}
Moreover, we find
\begin{align*}
 \|h_{f_0} - h_{\wideparen{f}_0}\| 
& = \|(L \circ f_0 - L \circ f_{L,P}^*) - (L \circ \wideparen{f}_0 - L \circ f_{L,P}^*)\|
\\
& = \|L \circ f_0 - L \circ \wideparen{f}_0\|
  \leq \|L \circ f_0\| + \|L \circ \wideparen{f}_0\|
  \leq 2 A_0.
\end{align*}
Inequality (\ref{bernsteininequality2}) applied to 
$h := ( h_{f_0} - h_{\wideparen{f}_0} ) - \mathbb E_P ( h_{f_0} - h_{\wideparen{f}_0} )$ 
thus shows that for 
\begin{align*}
n \geq n_0^* \geq \max \left\{ \min \left\{ m \geq 3 : 
m^2 \geq \frac{808 c (6 A_0 + B_0)}{B_0} \text{ and } 
\frac{m}{(\log m)^{\frac{2}{\gamma}}} \geq 4 \right\},
e^{\frac{3}{b}} \right\},
\end{align*}
we have
\begin{align*}
\mathbb E_{D_n} ( h_{f_0} - h_{\wideparen{f}_0} ) - \mathbb E_P ( h_{f_0} - h_{\wideparen{f}_0} ) 
\leq \sqrt{\frac{8 (\log n)^{\frac{2}{\gamma}} \tau B_0 \mathbb E_P ( h_{f_0} - h_{\wideparen{f}_0} )}{n}} 
     + \frac{8 (\log n)^{\frac{2}{\gamma}} B_0 \tau}{3 n}
\end{align*}
with probability $\mu$ not less than $1 - 2 e^{-\tau}$.
Moreover, using $\sqrt{a b} \leq \frac{a}{2} + \frac{b}{2}$, we find
\begin{align*}
\sqrt{8 (\log n)^{\frac{2}{\gamma}} n^{-1} \tau B_0 \mathbb E_P ( h_{f_0} - h_{\wideparen{f}_0} )} 
\leq \mathbb E_P ( h_{f_0} - h_{\wideparen{f}_0} ) + 2 (\log n)^{\frac{2}{\gamma}} n^{-1} B_0 \tau,
\end{align*}
and consequently we have with probability $\mu$ not less than $1 - 2 e^{-\tau}$ that
\begin{align}
\mathbb E_{D_n} ( h_{f_0} - h_{\wideparen{f}_0} ) - \mathbb E_P ( h_{f_0} - h_{\wideparen{f}_0} ) 
\leq \mathbb E_P ( h_{f_0} - h_{\wideparen{f}_0} ) 
     + \frac{14 (\log n)^{\frac{2}{\gamma}} B_0 \tau}{3 n}. 
\label{splitting21estimate}
\end{align}
In order to bound the remaining term in (\ref{splitting2}), that is 
$\mathbb E_{D_n} h_{\wideparen{f}_0} - \mathbb E_P h_{\wideparen{f}_0}$, 
we first observe that 
the assumed $L(x, y, t) \leq 1$ for all $(x, y) \in X \times Y$ and $t, t' \in [- M, M]$
implies $\|h_{\wideparen{f}_0}\|_{\infty} \leq 1$, and hence we have 
$\|h_{\wideparen{f}_0} - \mathbb E_P h_{\wideparen{f}_0}\|_{\infty} \leq 2$. 
Furthermore, we have 
\begin{align*}
 \|h_{f_0}\| 
= \|L \circ f_0 - L \circ f_{L,P}^*\|
\leq \|L \circ f_0\| + \|L \circ f_{L,P}^*\|
\leq A_0 + A^*.
\end{align*}
Moreover, (\ref{variancebound}) yields
\begin{align*}
\mathbb E_P ( h_{\wideparen{f}_0} - \mathbb E_P h_{\wideparen{f}_0} )^2 
\leq \mathbb E_P h_{\wideparen{f}_0}^2 
\leq V (\mathbb E_P h_{\wideparen{f}_0})^{\vartheta}.
\end{align*}
In addition, if $\vartheta \in (0, 1]$, the second inequality in Lemma \ref{younginequality} implies for 
$q := \frac{2}{2-\vartheta}$, $q' := \frac{2}{\vartheta}$, 
$a := ((\log n)^{\frac{2}{\gamma}} n^{-1} 2^{3-\vartheta} \vartheta^{\vartheta} V \tau)^{1/2}$, and 
$b := (2 \vartheta^{-1} \mathbb E_P h_{\wideparen{f}_0})^{\vartheta/2}$, that
\begin{align*}
\sqrt{\frac{8 (\log n)^{\frac{2}{\gamma}} V \tau (\mathbb E_P h_{\wideparen{f}_0})^{\vartheta}}{n}} 
& \leq \left( 1 - \frac{\vartheta}{2} \right) 
     \left( \frac{2^{3-\vartheta} \vartheta^{\vartheta} (\log n)^{\frac{2}{\gamma}} V \tau}{n} \right)^{\frac{1}{2 - \vartheta}} 
     + \mathbb E_P h_{\wideparen{f}_0} 
\\
& \leq \left( \frac{8 (\log n)^{\frac{2}{\gamma}} V \tau}{n} \right)^{\frac{1}{2 - \vartheta}} 
       + \mathbb E_P h_{\wideparen{f}_0}.
\end{align*}
Since $\mathbb E_P h_{\wideparen{f}_0} \geq 0$, this inequality also holds for $\vartheta = 0$, 
and hence (\ref{bernsteininequality2}) shows that for 
\begin{align*}
n \geq n_0^* \geq \max \left\{ \min \left\{ m \geq 3 : 
m^2 \geq \frac{808 c (3 A_0 + 3 A^* + 2)}{2} \text{ and } 
\frac{m}{(\log m)^{\frac{2}{\gamma}}} \geq 4 \right\},
e^{\frac{3}{b}} \right\},
\end{align*}
we have
\begin{align}
\mathbb E_{D_n} h_{\wideparen{f}_0} - \mathbb E_P h_{\wideparen{f}_0} 
< \mathbb E_P h_{\wideparen{f}_0} 
  + \left( \frac{8 (\log n)^{\frac{2}{\gamma}} V \tau}{n} \right)^{\frac{1}{2 - \vartheta}} 
  + \frac{16 (\log n)^{\frac{2}{\gamma}} \tau}{3 n} 
\label{splitting22estimate}
\end{align}
with probability $\mu$ not less than $1 - 2 e^{-\tau}$.
By combining this estimate with (\ref{splitting21estimate}) and (\ref{splitting2}), 
we now obtain that with probability $\mu$ not less than $1 - 4 e^{-\tau}$ we have
\begin{align}
\mathbb E_{D_n} h_{f_0} - \mathbb E_P h_{f_0} 
< \mathbb E_P h_{f_0} + \left( \frac{8 (\log n)^{\frac{2}{\gamma}} V \tau}{n} \right)^{\frac{1}{2-\vartheta}} 
  + \frac{16 (\log n)^{\frac{2}{\gamma}} \tau}{3 n} 
  + \frac{14 (\log n)^{\frac{2}{\gamma}} B_0 \tau}{3 n}, 
\label{splitting2estimate}
\end{align}
since $1 \leq B_0$, i.e., we have established a bound on the second term in (\ref{splitting}). 

\textbf{Estimating the Second Stochastic  Term.}
For the third term in (\ref{splitting}) let us first consider the case 
$n / (\log n)^{\frac{2}{\gamma}} < 8 (\tau + \varphi(\varepsilon/2) 2^p r^p)$. 
Combining (\ref{splitting2estimate}) with (\ref{splitting}) and using $1 \leq B_0$, 
$1 \leq V$, and 
$\mathbb E_P h_{\wideparen{f}_{D_n, \Upsilon}} - \mathbb E_{D_n} h_{\wideparen{f}_{D_n, \Upsilon}} \leq 2$, then we find
\begin{align*}
& \Upsilon(f_{D_n, \Upsilon}) + \mathcal R_{L, P} (\wideparen{f}_{D_n, \Upsilon}) - \mathcal R_{L, P}^*
\\
& \leq \Upsilon(f_0) + 2 \mathbb E_P h_{f_0} 
       + \left( \frac{8 (\log n)^{\frac{2}{\gamma}} V \tau}{n} \right)^{\frac{1}{2 - \vartheta}} 
       + \frac{16 (\log n)^{\frac{2}{\gamma}} \tau}{3 n} 
       + \frac{14 (\log n)^{\frac{2}{\gamma}} B_0 \tau}{3 n}
\\
& \phantom{=} + (\mathbb E_P h_{\wideparen{f}_{D_n, \Upsilon}} - \mathbb E_{D_n} h_{\wideparen{f}_{D_n, \Upsilon}} ) + \delta
\\
& \leq \Upsilon(f_0) + 2 \mathbb E_P h_{f_0} 
       + \left( \frac{8 (\log n)^{\frac{2}{\gamma}} V (\tau + \varphi(\varepsilon/2) 2^p r^p)}{n} \right)^{\frac{1}{2 - \vartheta}} 
       + \frac{10 (\log n)^{\frac{2}{\gamma}} B_0 \tau}{n}
\\
& \phantom{=} + 2 \left( \frac{8 (\log n)^{\frac{2}{\gamma}} V (\tau + \varphi(\varepsilon/2) 2^p r^p)}{n} \right)^{\frac{1}{2 - \vartheta}} + \delta
\\
& \leq 2 \Upsilon(f_0) + 4 \mathbb E_P h_{f_0} 
       + 3 \left( \frac{24 (\log n)^{\frac{2}{\gamma}} V (\tau + \varphi(\varepsilon/2) 2^p r^p)}{3 n} \right)^{\frac{1}{2 - \vartheta}} 
       + \frac{10 (\log n)^{\frac{2}{\gamma}} B_0 \tau}{n} + 2 \delta
\\
& \leq 2 \Upsilon(f_0) + 4 \mathbb E_P h_{f_0} + 4 r + 2 \delta
\end{align*}
with probability $\mu$ not less than $1 - 4 e^{-\tau}$. It thus remains to consider the case 
$n / (\log n)^{\frac{2}{\gamma}} \geq 8 (\tau + \varphi(\varepsilon/2) 2^p r^p)$. 

\textbf{Introduction of the Quotients.}
To establish a non-trivial bound on the term $\mathbb E_P h_{\wideparen{f}_D} - \mathbb E_{D_n} h_{\wideparen{f}_D}$ 
in (\ref{splitting}), we define functions
\begin{align*}
g_{f,r} := \frac{\mathbb E_P h_{\wideparen{f}} - h_{\wideparen{f}}}{\Upsilon(f) + \mathbb E_P h_{\wideparen{f}} + r}, 
~~~~~~ f \in \mathcal F, ~ r > r^*.
\end{align*}
For $f \in \mathcal F_r$, we have $\| \mathbb E_P h_{\wideparen{f}} - h_{\wideparen{f}} \|_{\infty} \leq 2$.
Furthermore, for $f \in \mathcal F_r$ and $k \geq 0$ with $2^k r \leq 1$, 
by the assumption (\ref{lipfr}) we find 
\begin{align*}
 \|h_{\wideparen{f}}\| 
= \|L \circ \wideparen{f} - L \circ f_{L,P}^*\|
\leq \|L \circ \wideparen{f}\| + \|L \circ f_{L,P}^*\|
\leq A_{2^k r} + A^*
\leq A_1 + A^*.
\end{align*}
Moreover, for $f \in \mathcal F_r$, the variance bound (\ref{variancebound}) implies
\begin{align}
\mathbb E_P ( h_{\wideparen{f}} - \mathbb E_P h_{\wideparen{f}} )^2 
\leq \mathbb E_P h_{\wideparen{f}}^2 
\leq V (\mathbb E_P h_{\wideparen{f}})^{\vartheta} 
\leq V r^{\vartheta}. \label{variancebound2}
\end{align}

\textbf{Peeling.}
This part is completely identical to the part \textit{Peeling} on page 135 of our work \cite{HaSt14a}. 
Hence we have neglected some steps of the derivations.
In case of uncertainty one may refer to \cite{HaSt14a} for details.

For a fixed $r \in (r^*, 1]$, let $K$ be the largest integer satisfying $2^K r \leq 1$. 
Then we can get the following disjoint partition of the function set $\mathcal F_1$:
\begin{align}
\mathcal F_1 \subset \mathcal F_r \cup \bigcup_{k=1}^{K+1} \left( \mathcal F_{2^k r} \backslash \mathcal F_{2^{k-1} r} \right).
\label{construction}
\end{align}
We further write $\overline{C}_{\varepsilon, r, 0}$ for a minimal $\varepsilon$-net of $\mathcal F_r$ and 
$\overline{C}_{\varepsilon, r, k}$ for minimal $\varepsilon$-nets of $\mathcal F_{2^k r} \backslash \mathcal F_{2^{k-1} r}$, 
$1 \leq k \leq K+1$, respectively. Then the union of these nets 
$\overline{C}_{\varepsilon, 1} := \bigcup_{k=0}^{K+1} \overline{C}_{\varepsilon, r, k}$ 
is an $\varepsilon$-net of the set $\mathcal F_1$. Moreover, we define 
\begin{align}
\widetilde{\mathcal C}_{\varepsilon, r, k} 
:= \bigcup_{l=0}^k \overline{C}_{\varepsilon, r, l}, 
~~~~ 0 \leq k \leq K + 1.
\label{overlapp}
\end{align}
Then we have $\overline{C}_{\varepsilon, 1} = \bigcup_{k=0}^{K+1} \widetilde{C}_{\varepsilon, r, k}$. 
Moreover, the cardinality of $\widetilde{\mathcal C}_{\varepsilon, r, k}$ can be estimated by
\begin{align}
 |\widetilde{\mathcal C}_{\varepsilon, r, k}| 
\leq (k+1) \exp \left( \varphi(\varepsilon/2) 2^{k p} r^p \right), ~~~~ 0 \leq k \leq K + 1.
\label{modicoveringnumber}
\end{align}
Then, peeling by \cite[Theorem 5.2]{HaSt14a} implies 
\begin{align}
\mu \left( \sup_{f \in \overline{\mathcal C}_{\varepsilon, 1}} \mathbb E_{D_n} g_{f,r} > \frac{1}{4} \right)
\leq 2 \sum_{k=1}^{K+1} \mu \left( \sup_{f \in \widetilde{\mathcal C}_{\varepsilon, r, k}} 
                                      \mathbb E_{D_n} (\mathbb E_P h_{\wideparen{f}} - h_{\wideparen{f}}) > 2^{k-3} r \right).
\label{estimationbypeeling}
\end{align}

\textbf{Estimating the Error Probabilities on the ``Spheres''.}
Our next goal is to estimate all the error probabilities on the right-hand side of (\ref{estimationbypeeling}).
By our construction, we have $\widetilde{\mathcal C}_{\varepsilon, r, k} \subset \mathcal F_{2^k r}$.
This, together with (\ref{bernsteininequality1}), (\ref{variancebound2}), the union bound 
and the estimates of the covering numbers (\ref{modicoveringnumber}), implies that for 
\begin{align*}
n \geq n_0^* \geq \max \left\{ \min \left\{ m \geq 3 : 
m^2 \geq \frac{808 c (3 A_1 + 3 A^* + 2)}{2} \text{ and } 
\frac{m}{(\log m)^{\frac{2}{\gamma}}} \geq 4 \right\},
e^{\frac{3}{b}} \right\},
\end{align*}
we have
\begin{align}
& \mu \left( \sup_{f \in \widetilde{\mathcal C}_{\varepsilon, r, k}} 
              \mathbb E_{D_n} (\mathbb E_P h_{\wideparen{f}} - h_{\wideparen{f}}) > 2^{k-3} r \right)
\nonumber\\
& \leq 2 |\widetilde{\mathcal C}_{\varepsilon, r, k}| 
         \exp \left( - \frac{n}{8 (\log n)^{\frac{2}{\gamma}}} \cdot 
                       \frac{(2^{k-3} r)^2}{V (2^k r)^{\vartheta} + 2 (2^{k-3} r)/3} \right)
\nonumber\\
& \leq 2 (k+1) \exp \left( \varphi(\varepsilon/2) 2^{k p} r^p \right) 
               \cdot \exp \left( - \frac{n}{8 (\log n)^{\frac{2}{\gamma}}} \cdot 
                                   \frac{3 (2^{k-1} r)^2}{96 V (2^{k-1} r)^{\vartheta} + 8 (2^{k-1} r)} \right),
\label{smallprobability}
\end{align}
since $\vartheta \in [0, 1]$.
For $k \geq 1$, we denote the right-hand side of this estimate by $p_k(r)$, that is
\begin{align}
p_k(r) := 2 (k+1) \exp \left( \varphi(\varepsilon/2) 2^{k p} r^p \right) 
                  \cdot \exp \left( - \frac{n}{8 (\log n)^{\frac{2}{\gamma}}} \cdot 
                                      \frac{3 (2^{k-1} r)^2}{96 V (2^{k-1} r)^{\vartheta} + 8 (2^{k-1} r)} \right).
\label{pkr}
\end{align}
Then, as derived in \cite{HaSt14a}, we can obtain
\begin{align*}
q_k(r) := \frac{p_{k+1}(r)}{p_k(r)}
\leq 2 \exp \left( \varphi(\varepsilon/2) 2^{k p + 1} r^p \right) 
             \cdot \exp \left( - \frac{n}{8 (\log n)^{\frac{2}{\gamma}}} \cdot 
                                 \frac{3 (2^{k-1} r)^2}{96 V (2^{k-1} r)^{\vartheta} + 8 (2^{k-1} r)} \right),
\end{align*}
and our assumption $2^k r \leq 1$, $0 \leq k \leq K$ implies
\begin{align*}
q_k(r) 
& \leq 2 \exp \left( \varphi(\varepsilon/2) 2^{kp+1} r^p \right)
        \cdot \exp \left( - \frac{n}{8 (\log n)^{\frac{2}{\gamma}}} \cdot 
                            \frac{3 (2^{k-1} r)^2}{96 V (2^{k-1} r)^{\vartheta} + 8 (2^{k-1} r)^{\vartheta}} \right)
\\
& \leq 2 \exp \left( 2^{(k-1)p} \cdot 4 r^p \varphi(\varepsilon/2) 
         - \frac{2^{(k - 1) (2 - \vartheta)} \cdot 3 n r^{2 - \vartheta}}{64 (12 V + 1) (\log n)^{\frac{2}{\gamma}}}  \right).
\end{align*}
Since $p \in (0, 1]$, $k \geq 1$ and $\vartheta \in [0, 1]$, we have
$2^{(k-1)p} \leq 2^{(k - 1) (2 - \vartheta)}$.
Then the first assumption in (\ref{minradius}), namely,
\begin{align*}
r \geq \left( \frac{512 (12 V + 1) (\log n)^{\frac{2}{\gamma}} (\tau + \varphi(\varepsilon/2) 2^p r^p)}{3 n} \right)^{\frac{1}{2 - \vartheta}}
\end{align*}
implies that
$3 n r^{2 - \vartheta} \geq 512 (12 V + 1) (\log n)^{\frac{2}{\gamma}} \varphi(\varepsilon/2) r^p$.
By using $2^{(k - 1) (2 - \vartheta)} \geq 1$, we find
\begin{align*}
q_k(r) \leq 2 \exp \left( - \frac{3 n r^{2 - \vartheta}}{128 (12 V + 1) (\log n)^{\frac{2}{\gamma}}} \right). 
\end{align*}
Moreover, since $\tau \geq 1$, the first assumption in (\ref{minradius}) implies also
$3 n r^{2 - \vartheta} \geq 4 \cdot 128 (12 V + 1) (\log n)^{\frac{2}{\gamma}}$.
Hence we have $q_k(r) \leq 2 e^{-4}$, that is, 
\begin{align}
 p_{k+1}(r) \leq 2 e^{-4} p_k(r)
\,\,\,\,\,\,\,\, 
\text{ for all } k \geq 1.
\label{relation} 
\end{align}

\textbf{Summing all the Error Probabilities.}
Now, combining (\ref{estimationbypeeling}) with (\ref{smallprobability}),
(\ref{pkr}), and (\ref{relation}), we obtain
\begin{align*}
\mu \left( \sup_{f \in \overline{\mathcal C}_{\varepsilon, 1}} \mathbb E_{D_n} g_{f,r} > \frac{1}{4} \right) 
& \leq 2 \sum_{k=1}^{K+1} p_k(r) 
  \leq 3 p_1(r)
\\
& = 12 \exp \left( \varphi(\varepsilon/2) 2^p r^p \right) 
         \cdot \exp \left( - \frac{n}{8 (\log n)^{\frac{2}{\gamma}}} \cdot 
                             \frac{3 r^2}{96 V r^{\vartheta} + 8 r} \right)
\\
& \leq 12 \exp \left( \varphi(\varepsilon/2) 2^p r^p \right) 
          \cdot \exp \left( - \frac{3 n r^{2 - \vartheta}}{64 (12 V + 1) (\log n)^{\frac{2}{\gamma}}}  \right),
\end{align*}
where in the last step we used $r \in (0, 1]$ and $\vartheta \in [0, 1]$.
Then once again the first assumption in (\ref{minradius}) gives
$3 n r^{2 - \vartheta} \geq 64 (12 V + 1) (\log n)^{\frac{2}{\gamma}} (\tau + \varphi(\varepsilon/2) 2^p r^p)$
and a simple transformation thus yields
\begin{align*}
\mu \left( D_n \in (X \times Y)^n : \sup_{f \in \overline{\mathcal C}_{\varepsilon, 1}} \mathbb E_{D_n} g_{f,r} \leq \frac{1}{4} \right) 
\geq 1 - 12 e^{-\tau}.
\end{align*} 
The rest of the argument is completely analogous to the proof of \cite[Theorem 3.1]{HaSt14a}
and the assertion is proved.
\end{proof}

\begin{proof}[Proof of Theorem \ref{LSSVMgaussiankernels}]

For the least-square loss, the variance bound (\ref{variancebound}) is valid with $\vartheta = 1$, hence
the condition (\ref{minradius}) is satisfied if
\begin{align}
r \geq \max \Bigg\{ 
\left( c_V 2^{1+3p} a \right)^{\frac{1}{1-p}} 
\sigma^{-\frac{d}{1-p}} \lambda^{-\frac{p}{1-p}} 
\left( \frac{n}{(\log n)^{\frac{2}{\gamma}}} \right)^{\frac{1}{1-p}} 
\varepsilon^{-\frac{2p}{1-p}},
\frac{2 c_V (\log n)^{\frac{2}{\gamma}} \tau}{n},
\frac{20 B_0 (\log n)^{\frac{2}{\gamma}} \tau}{n},
r^* \Bigg\}.
\label{minradius3}
\end{align}
Furthermore,  \cite[Section 2]{EbSt11a}
 shows that there exists a constant $c>0$ such that 
 for all  $\sigma \in (0, 1]$, there is 
 an $f_0\in H_\sigma$ with $\inorm{f_0}\leq c$, 
$\snorm{f_0}_{H_\sigma}^2 \leq c \sigma^{-d}$,
and 
\begin{displaymath}
\mathcal R_{L, P}(f_0) -  \mathcal R_{L, P}^*  \leq  c \sigma^{2t} \, .
\end{displaymath}
Moreover, \cite[Lemma 5.5]{StAn09a} shows 
every function $f$ in $H_{\sigma}$ is
Lipschitz continuous with 
\begin{align*}
|f|_1 \leq \sqrt{2} \sigma^{-1} \|f\|_{H_{\sigma}(X)}\, , 
\end{align*}
and this implies
\begin{align*}
 |\wideparen{f}_0|_1
\leq |f_0|_1
\leq \sqrt{2}  \sigma^{-1} \|f_0\|_{H_{\sigma}(X)}
\leq \sqrt{2} c\sigma^{-1} .
\end{align*}
Moreover, there exists a constant $C^* < \infty$ such that $|f_{L,P}^*|_1 \leq C^*$,
since we have assumed that $f^*_{L, P} \in \mathrm{Lip}(\mathbb R^d)$. Then,
Lemma \ref{lipschitzlemma} (i) yields 
\begin{align*}
 4 A_0 + A_1 + A^* + 1
& = 
2\sqrt{2} \left( M + \|f\|_{\infty} \right) 
\left(
4 + 4 |f_0|_1
+ 1 + \sup_{f \in \mathcal{F}_1} |f|_1
+ 1 + |f_{L,P}^*|_1
+ 1
\right) + 1
\\
& \leq
2\sqrt{2} \left( M + \|f\|_{\infty} \right) 
\left( 7 +
4 \sqrt{2} c \sigma^{-1} 
+ \sup_{r \leq 1} \sqrt{2} \sigma^{-1} \lambda^{-1/2} r^{1/2}
+ C^*
\right) + 1
\\
& =
2\sqrt{2} \left( M + \|f\|_{\infty} \right) 
\left( 7 +
5 \sqrt{2} c \sigma^{-1} \lambda^{-1/2}
+ C^*
\right) + 1
\\
& \leq 2 C \sigma^{-1} \lambda^{-1/2} \leq 2 C n
\end{align*}
for all $\sigma, \lambda \in (0, 1]$ with $\lambda \sigma^2 \geq n^{-2}$,
where  $C$ is a constant independent of $n$, $\lambda$, and $\sigma$. 
For 
\begin{align*}
n \geq \max \left\{ 2 C, \min \left\{ m \geq 3 :
\frac{m}{(\log m)^{\frac{2}{\gamma}}} \geq 4 \right\},
e^{\frac{3}{b}} \right\},
\end{align*}
the oracle inequality (\ref{oracleinequalityy}) thus implies
\begin{align*}
& \lb\snorm{f_{D_n, \lb}}_{H_\sigma}^2 + \mathcal R_{L, P} (\wideparen{f}_{D_n, \lb}) - \mathcal R_{L, P}^*
\nonumber\\
& \leq 4 \lb\snorm{f_0}_{H_\sigma}^2 + 4 \mathcal R_{L, P}(f_0) - 4 \mathcal R_{L, P}^* + 4 r + 5 \varepsilon  
\nonumber\\
& \leq C_1 \left( \lambda \sigma^{-d} +  \sigma^{2t}
+ \sigma^{-\frac{d}{1-p}} \lambda^{-\frac{p}{1-p}} 
\left( \frac{n}{(\log n)^{\frac{2}{\gamma}}} \right)^{\frac{1}{1-p}}  \varepsilon^{-\frac{2p}{1-p}} \t
+ \varepsilon \right),
\end{align*}
where  $C_1$ is a constant independent of $n$, $\lb$, $\sigma$, $\t$, and $\e$. Here, 
$\lb$, $\s$, and $n$ need satisfy the additional requirement $\sigma, \lambda \in (0, 1]$ with $\lambda \sigma^2 \geq n^{-2}$.
Now, optimizing over $\e$ by using   \cite[Lemma A.1.5]{StCh08a}, we get
\begin{align}
\lb\snorm{f_{D_n, \lb}}_{H_\sigma}^2 + \mathcal R_{L, P} (\wideparen{f}_{D_n, \lb}) - \mathcal R_{L, P}^*
\leq C_2 \left( \lambda \sigma^{-d} +  \sigma^{2t}
+ \sigma^{-\frac{d}{1+p}} \lambda^{-\frac{p}{1+p}} \left( \frac{n}{(\log n)^{\frac{2}{\gamma}}} \right)^{\frac{1}{1+p}} \t \right),
\label{zwischenschritt}
\end{align}
where  $C_2$ is a constant independent of $n$, $\lb$, and $\sigma$. By applying
\cite[Lemma A.1.6]{StCh08a}, we can optimize the right-hand side of (\ref{zwischenschritt}) 
over $\lambda$ and $\sigma$, then we see that for all
$\xi > 0$ we can find $p, \zeta \in (0, 1)$ sufficiently close to $0$ such that 
the LS-SVM using Gaussian RKHS $H_{\sigma}$ and
$\lambda_n = n^{- 1}$, $\sigma_n = n^{- \frac{1}{2 t + d}}$  
learns with rate 
$n^{- \frac{2 t}{2 t + d} + \xi}$,
since the requirement $\lambda_n \sigma_n^2 \geq n^{-2}$ is automatically satisfied by the assumed $t\geq 1$.
\end{proof}

\begin{proof}[Proof of Theorem \ref{oracleinequalityds}]
Theorem \ref{oracleinequality} yields
\begin{align*}
\Upsilon(f_{\!\boldsymbol{D}_{\!n}^{(j)}, \Upsilon}) 
+ \mathbb E_{P} h_{\wideparen{f}_{\!\boldsymbol{D}_{\!n}^{(j)}, \Upsilon}} 
\leq 2 \Upsilon(f_0) + 4 \mathbb E_{P} h_{f_0} + 4 r + 5 \varepsilon + 2 \delta
\end{align*}
with probability $\mu \otimes \nu$ not less than $1 - 16 e^{-\tau}$. 
Using (\ref{dto1}) and the definition (\ref{forecasterd}) we
then easily obtain the assertion.
\end{proof}

{\small
\bibliographystyle{abbrv}

\begin{thebibliography}{10}

\bibitem{Adamczak08a}
R.~Adamczak.
\newblock A tail inequality for suprema of unbounded empirical processes with
  applications to {M}arkov chains.
\newblock {\em Electron. J. Probab.}, 13:no. 34, 1000--1034, 2008.

\bibitem{AlWi12a}
P.~Alquier and O.~Wintenberger.
\newblock Fast rates in learning with dependent observations.
\newblock {\em JMLR: Workshop and Conference Proceedings}, pages 1--15, 2012.

\bibitem{ArGaPa14a}
V.~Ara{\'u}jo, S.~Galatolo, and M.~J. Pacifico.
\newblock Decay of correlations for maps with uniformly contracting fibers and
  logarithm law for singular hyperbolic attractors.
\newblock {\em Math. Z.}, 276(3-4):1001--1048, 2014.

\bibitem{Baladi00a}
V.~Baladi.
\newblock {\em Positive {T}ransfer {O}perators and {D}ecay of {C}orrelations},
  volume~16 of {\em Advanced Series in Nonlinear Dynamics}.
\newblock World Scientific Publishing Co., Inc., River Edge, NJ, 2000.

\bibitem{Baladi01a}
V.~Baladi.
\newblock Decay of correlations.
\newblock In {\em Smooth ergodic theory and its applications ({S}eattle, {WA},
  1999)}, volume~69 of {\em Proc. Sympos. Pure Math.}, pages 297--325. Amer.
  Math. Soc., Providence, RI, 2001.

\bibitem{BaMe08a}
P.~B{\'a}lint and I.~Melbourne.
\newblock Decay of correlations and invariance principles for dispersing
  billiards with cusps, and related planar billiard flows.
\newblock {\em J. Stat. Phys.}, 133(3):435--447, 2008.

\bibitem{Belomestny11a}
D.~Belomestny.
\newblock Spectral estimation of the {L}\'evy density in partially observed
  affine models.
\newblock {\em Stochastic Process. Appl.}, 121(6):1217--1244, 2011.

\bibitem{BeYo00a}
M.~Benedicks and L.-S. Young.
\newblock Markov extensions and decay of correlations for certain {H}\'enon
  maps.
\newblock {\em Ast\'erisque}, (261):xi, 13--56, 2000.
\newblock G{\'e}om{\'e}trie complexe et syst{\`e}mes dynamiques (Orsay, 1995).

\bibitem{BlLuVa03a}
G.~Blanchard, G.~Lugosi, and N.~Vayatis.
\newblock On the rate of convergence of regularized boosting classifiers.
\newblock {\em J. Mach. Learn. Res.}, 4(5):861--894, 2004.

\bibitem{Bosq93a}
D.~Bosq.
\newblock Bernstein-type large deviations inequalities for partial sums of
  strong mixing processes.
\newblock {\em Statistics}, 24(1):59--70, 1993.

\bibitem{Bowen75a}
R.~Bowen.
\newblock {\em Equilibrium {S}tates and the {E}rgodic {T}heory of {A}nosov
  {D}iffeomorphisms}.
\newblock Lecture Notes in Mathematics, Vol. 470. Springer-Verlag, Berlin-New
  York, 1975.

\bibitem{Bradley07a}
R.~C. Bradley.
\newblock {\em Introduction to {S}trong {M}ixing {C}onditions. {V}ol. 1}.
\newblock Kendrick Press, Heber City, UT, 2007.

\bibitem{Chernov99a}
N.~Chernov.
\newblock Decay of correlations and dispersing billiards.
\newblock {\em J. Statist. Phys.}, 94(3-4):513--556, 1999.

\bibitem{ChZh05a}
N.~Chernov and H.-K. Zhang.
\newblock Billiards with polynomial mixing rates.
\newblock {\em Nonlinearity}, 18(4):1527--1553, 2005.

\bibitem{CoMaSc02a}
P.~Collet, S.~Martinez, and B.~Schmitt.
\newblock Exponential inequalities for dynamical measures of expanding maps of
  the interval.
\newblock {\em Probab. Theory Related Fields}, 123(3):301--322, 2002.

\bibitem{Davydov68a}
Y.~A. Davydov.
\newblock Convergence of distributions generated by stationary stochastic
  processes.
\newblock {\em Theory Probab. Appl.}, 13(4):691--696, 1968.

\bibitem{DeDoLaRLoPr07a}
J.~Dedecker, P.~Doukhan, G.~Lang, J.~R. Le{\'o}n~R., S.~Louhichi, and
  C.~Prieur.
\newblock {\em Weak dependence: with examples and applications}, volume 190 of
  {\em Lecture Notes in Statistics}.
\newblock Springer, New York, 2007.

\bibitem{DePr05a}
J.~Dedecker and C.~Prieur.
\newblock New dependence coefficients. {E}xamples and applications to
  statistics.
\newblock {\em Probab. Theory Related Fields}, 132(2):203--236, 2005.

\bibitem{DeGyLu96}
L.~Devroye, L.~Gy{\"o}rfi, and G.~Lugosi.
\newblock {\em A Probabilistic Theory of Pattern Recognition}.
\newblock Springer, New York, 1996.

\bibitem{DeLu01}
L.~Devroye and G.~Lugosi.
\newblock {\em Combinatorial Methods in Density Estimation}.
\newblock Springer, New York, 2001.

\bibitem{EbSt11a}
M.~Eberts and I.~Steinwart.
\newblock Optimal regression rates for {SVM}s using {G}aussian kernels.
\newblock {\em Electron. J. Stat.}, 7:1--42, 2013.

\bibitem{GyKoKrWa02}
L.~Gy{\"o}rfi, M.~Kohler, A.~Krzy{\.z}ak, and H.~Walk.
\newblock {\em A distribution-free theory of nonparametric regression}.
\newblock Springer Series in Statistics. Springer-Verlag, New York, 2002.

\bibitem{HaSt14a}
H.~Hang and I.~Steinwart.
\newblock Fast learning from {$\alpha$}-mixing observations.
\newblock {\em J. Multivariate Anal.}, 127:184--199, 2014.

\bibitem{HoKe82a}
F.~Hofbauer and G.~Keller.
\newblock Ergodic properties of invariant measures for piecewise monotonic
  transformations.
\newblock {\em Math. Z.}, 180(1):119--140, 1982.

\bibitem{Ibragimov62a}
I.~A. Ibragimov.
\newblock Some limit theorems for stationary processes.
\newblock {\em Theory Probab. Appl.}, 7(4):349--382, 1962.

\bibitem{KaNe06a}
R.~S. Kallabis and M.~H. Neumann.
\newblock An exponential inequality under weak dependence.
\newblock {\em Bernoulli}, 12(2):333--350, 2006.

\bibitem{KeNo92a}
G.~Keller and T.~Nowicki.
\newblock Spectral theory, zeta functions and the distribution of periodic
  points for {C}ollet-{E}ckmann maps.
\newblock {\em Comm. Math. Phys.}, 149(1):31--69, 1992.

\bibitem{Liverani95a}
C.~Liverani.
\newblock Decay of correlations.
\newblock {\em Ann. of Math. (2)}, 142(2):239--301, 1995.

\bibitem{LuMe13a}
S.~Luzzatto and I.~Melbourne.
\newblock Statistical properties and decay of correlations for interval maps
  with critical points and singularities.
\newblock {\em Comm. Math. Phys.}, 320(1):21--35, 2013.

\bibitem{Markarian04a}
R.~Markarian.
\newblock Billiards with polynomial decay of correlations.
\newblock {\em Ergodic Theory Dynam. Systems}, 24(1):177--197, 2004.

\bibitem{Maume06a}
V.~Maume-Deschamps.
\newblock Exponential inequalities and functional estimations for weak
  dependent data; applications to dynamical systems.
\newblock {\em Stoch. Dyn.}, 6(4):535--560, 2006.

\bibitem{MePeRi09a}
F.~Merlev{\`e}de, M.~Peligrad, and E.~Rio.
\newblock Bernstein inequality and moderate deviations under strong mixing
  conditions.
\newblock In {\em High dimensional probability {V}: the {L}uminy volume},
  volume~5 of {\em Inst. Math. Stat. Collect.}, pages 273--292. Inst. Math.
  Statist., Beachwood, OH, 2009.

\bibitem{MoMa96a}
D.~S. Modha and E.~Masry.
\newblock Minimum complexity regression estimation with weakly dependent
  observations.
\newblock {\em IEEE Trans. Inform. Theory}, 42(6, part 2):2133--2145, 1996.

\bibitem{Rio96a}
E.~Rio.
\newblock Sur le th\'eor\`eme de {B}erry-{E}sseen pour les suites faiblement
  d\'ependantes.
\newblock {\em Probab. Theory Related Fields}, 104(2):255--282, 1996.

\bibitem{Rosenblatt56a}
M.~Rosenblatt.
\newblock A central limit theorem and a strong mixing condition.
\newblock {\em Proc. Nat. Acad. Sci. U. S. A.}, 42:43--47, 1956.

\bibitem{Ruelle76a}
D.~Ruelle.
\newblock A measure associated with axiom-{A} attractors.
\newblock {\em Amer. J. Math.}, 98(3):619--654, 1976.

\bibitem{Rychlik83a}
M.~Rychlik.
\newblock Bounded variation and invariant measures.
\newblock {\em Studia Math.}, 76(1):69--80, 1983.

\bibitem{Samson00a}
P.-M. Samson.
\newblock Concentration of measure inequalities for {M}arkov chains and
  {$\Phi$}-mixing processes.
\newblock {\em Ann. Probab.}, 28(1):416--461, 2000.

\bibitem{Sinai72a}
J.~G. Sina{\u\i}.
\newblock Gibbs measures in ergodic theory.
\newblock {\em Russ. Math. Surveys}, 27:21--69, 1972.

\bibitem{Steinwart09a}
I.~Steinwart.
\newblock Two oracle inequalities for regularized boosting classifiers.
\newblock {\em Stat. Interface}, 2(3):271--284, 2009.

\bibitem{StAn09a}
I.~Steinwart and M.~Anghel.
\newblock Consistency of support vector machines for forecasting the evolution
  of an unknown ergodic dynamical system from observations with unknown noise.
\newblock {\em Ann. Statist.}, 37(2):841--875, 2009.

\bibitem{StCh08a}
I.~Steinwart and A.~Christmann.
\newblock {\em Support {V}ector {M}achines}.
\newblock Information Science and Statistics. Springer, New York, 2008.

\bibitem{StCh11a}
I.~Steinwart and A.~Christmann.
\newblock Estimating conditional quantiles with the help of the pinball loss.
\newblock {\em Bernoulli}, 17(1):211--225, 2011.

\bibitem{StHuSc06a}
I.~Steinwart, D.~Hush, and C.~Scovel.
\newblock An explicit description of the reproducing kernel {H}ilbert spaces of
  {G}aussian {RBF} kernels.
\newblock {\em IEEE Trans. Inform. Theory}, 52(10):4635--4643, 2006.

\bibitem{StHuSc09b}
I.~Steinwart, D.~Hush, and C.~Scovel.
\newblock Optimal rates for regularized least squares regression.
\newblock In S.~Dasgupta and A.~Klivans, editors, {\em Proceedings of the 22nd
  Annual Conference on Learning Theory}, pages 79--93. 2009.

\bibitem{Tsybakov04a}
A.~B. Tsybakov.
\newblock Optimal aggregation of classifiers in statistical learning.
\newblock {\em Ann. Statist.}, 32(1):135--166, 2004.

\bibitem{Wintenberger10a}
O.~Wintenberger.
\newblock Deviation inequalities for sums of weakly dependent time series.
\newblock {\em Electron. Commun. Probab.}, 15:489--503, 2010.

\bibitem{Young98a}
L.-S. Young.
\newblock Statistical properties of dynamical systems with some hyperbolicity.
\newblock {\em Ann. of Math. (2)}, 147(3):585--650, 1998.

\bibitem{Young99a}
L.-S. Young.
\newblock Recurrence times and rates of mixing.
\newblock {\em Israel J. Math.}, 110:153--188, 1999.

\bibitem{Zhang04a}
J.~Zhang.
\newblock Sieve estimates via neural network for strong mixing processes.
\newblock {\em Stat. Inference Stoch. Process.}, 7(2):115--135, 2004.

\end{thebibliography}

}

\end{document}